\newtheorem{theorem}{Theorem}[section]
\newtheorem{lemma}[theorem]{Lemma}
\newtheorem{proposition}[theorem]{Proposition}
\newtheorem{corollary}[theorem]{Corollary}
\theoremstyle{plain}
\theoremstyle{definition}
\newtheorem{definition}[theorem]{Definition}
\numberwithin{equation}{section}
\renewcommand{\labelenumi}{\textup{(\theenumi)}}
\title{On a family of $C^*$-subalgebras of Cuntz--Krieger algebras \\
}
\author{Kengo Matsumoto \\
Department of Mathematics \\
Joetsu University of Education \\
Joetsu, 943-8512, Japan
}
\begin{document}


\maketitle

\date{}

\def\det{{{\operatorname{det}}}}

\begin{abstract}
In this paper, we study a family of $C^*$-subalgebras defined by fixed points of 
generalized gauge actions of a Cuntz--Krieger algebra, 
by introducing a family of \'etale groupoids whose associated $C^*$-algebras 
are these $C^*$-subalgebras. 
We know that topological conjugacy classes of one-sided topological Markov
shifts are characterized in terms of the isomorphism classes of these \'etale groupoids.
\end{abstract}

{\it Mathematics Subject Classification}:
 Primary 37A55; Secondary 46L35, 37B10.

{\it Keywords and phrases}:
Topological Markov shifts,   Cuntz--Krieger algebras, AF-algebra,
\'etale groupoid,  gauge action


\newcommand{\Ker}{\operatorname{Ker}}
\newcommand{\sgn}{\operatorname{sgn}}
\newcommand{\Ad}{\operatorname{Ad}}
\newcommand{\ad}{\operatorname{ad}}
\newcommand{\orb}{\operatorname{orb}}

\def\Re{{\operatorname{Re}}}
\def\det{{{\operatorname{det}}}}
\newcommand{\K}{\mathcal{K}}

\newcommand{\N}{\mathbb{N}}
\newcommand{\C}{\mathcal{C}}
\newcommand{\R}{\mathbb{R}}
\newcommand{\Rp}{{\mathbb{R}}^*_+}
\newcommand{\T}{\mathbb{T}}
\newcommand{\Z}{\mathbb{Z}}
\newcommand{\Zp}{{\mathbb{Z}}_+}
\def\AF{{{\operatorname{AF}}}}

\def\OA{{{\mathcal{O}}_A}}
\def\OAf{{{\mathcal{O}}_{A_f}}}
\def\OB{{{\mathcal{O}}_B}}
\def\SOA{{{\mathcal{O}}_A}\otimes{\mathcal{K}}}
\def\SOB{{{\mathcal{O}}_B}\otimes{\mathcal{K}}}
\def\F{{\mathcal{F}}}
\def\G{{\mathcal{G}}}
\def\FA{{{\mathcal{F}}_A}}
\def\PA{{{\mathcal{P}}_A}}
\def\FAH{{{\mathcal{F}}_{A,H}}}
\def\FAf{{{\mathcal{F}}_{A,f}}}
\def\FAg{{{\mathcal{F}}_{A,g}}}
\def\FBg{{{\mathcal{F}}_{B,g}}}
\def\FAb{{{\mathcal{F}}_A^b}}
\def\FB{{{\mathcal{F}}_B}}
\def\DA{{{\mathcal{D}}_A}}
\def\DB{{{\mathcal{D}}_B}}
\def\Ext{{{\operatorname{Ext}}}}
\def\Max{{{\operatorname{Max}}}}
\def\Per{{{\operatorname{Per}}}}
\def\Homeo{{{\operatorname{Homeo}}}}
\def\Out{{{\operatorname{Out}}}}
\def\Aut{{{\operatorname{Aut}}}}
\def\Int{{{\operatorname{Int}}}}
\def\Ad{{{\operatorname{Ad}}}}
\def\Inn{{{\operatorname{Inn}}}}
\def\det{{{\operatorname{det}}}}
\def\exp{{{\operatorname{exp}}}}
\def\nep{{{\operatorname{nep}}}}
\def\sgn{{{\operatorname{sign}}}}
\def\cobdy{{{\operatorname{cobdy}}}}
\def\Ker{{{\operatorname{Ker}}}}
\def\ind{{{\operatorname{ind}}}}
\def\id{{{\operatorname{id}}}}
\def\supp{{{\operatorname{supp}}}}
\def\co{{{\operatorname{co}}}}
\def\scoe{{{\operatorname{scoe}}}}
\def\coe{{{\operatorname{coe}}}}
\def\Span{{{\operatorname{Span}}}}

\def\S{\mathcal{S}}

\def\tS{\tilde{S}}

\def\coe{{{\operatorname{coe}}}}
\def\scoe{{{\operatorname{scoe}}}}
\def\uoe{{{\operatorname{uoe}}}}
\def\ucoe{{{\operatorname{ucoe}}}}
\def\event{{{\operatorname{event}}}}

\section{Introduction and Preliminary}

In this paper, we will study a family of $C^*$-subalgebras defined by fixed points of 
generalized gauge actions of a Cuntz--Krieger algebra
$\OA$
from a groupoid view point.
Each of the $C^*$-subalgebras contains  the canonical maximal abelian $C^*$-subalgebra
of $\OA$. 
They are generalization of the canonical AF subalgebra of a Cuntz--Krieger algebra.
Let $A =[A(i,j)]_{i,j=1}^N$
be an irreducible non permutation matrix with entries in $\{0,1\}$.
The Cuntz--Krieger algebra $\OA$ is defined by $N$ partial isometries 
$S_1,\dots, S_N$ satisfying the operator relations:
$1 = \sum_{j=1}^N S_j S_j^*, \, S_i^* S_i = \sum_{j=1}^N A(i,j) S_j S_j^*, i=1,\dots, N$
(\cite{CK}).
The algebras are closely related to a class of symbolic dynamical systems
called topological Markov shifts.
The one-sided topological Markov shift $(X_A,\sigma_A)$ for the matrix $A$
is defined by its shift space $X_A$ consisting of right one-sided sequences
$(x_n)_{n \in \N} \in \{1,2,\dots, N\}^\N$ satisfying 
$A(x_n, x_{n+1}) =1, n \in \N$ with its shift transformation
$\sigma_A:X_A\longrightarrow X_A$ defined by
$\sigma_A((x_n)_{n \in \N}) = (x_{n+1})_{n \in \N},$
where $\N$ denotes the set of positive integers.
The topology of $X_A$ is endowed with the relative topology 
of the infinite product topology on
$\{1,2,\dots, N\}^\N$ for the discrete set $\{1,2,\dots, N\}$.
Hence the shift space $X_A$ 
is a compact Hausdorff space homeomorphic to a Cantor discontinuum and 
the shift transformation 
$\sigma_A: X_A\longrightarrow X_A$ is a continuous surjection.
Let us denote by $B_k(X_A)$ the set of admissible words
$\{ (x_1,\dots, x_k) \in \{1,\dots, N\}^k\mid (x_n)_{n \in \N} \in X_A\}$
 of $X_A$ with its length $k$.    
Let $U_\mu$ be the cylinder set 
$\{ (x_n)_{n \in \N} \in X_A \mid x_1 = \mu_1,\dots, \mu_m = x_m \}$
for the word $\mu = (\mu_1, \dots, \mu_m)\in B_m(X_A)$.
Let $\chi_{U_\mu}$ be the characteristic function of $U_\mu$ on $X_A$.
It is well-known that 
the corresondence
$\chi_{U_\mu} \longrightarrow S_{\mu_1}\cdots S_{\mu_m}S_{\mu_m}^*\cdots S_{\mu_1}^*$
gives rise to an isomorphism from the commutative $C^*$-algebra 
$C(X_A)$ of complex valued continuous functions on $X_A$ 
to the $C^*$-subalgebra $\DA$ of $\OA$ 
generated by the projections of the form
$S_{\mu_1}\cdots S_{\mu_m}S_{\mu_m}^*\cdots S_{\mu_1}^*.$
The gauge action written $\rho^A$ of $\T$ to the automorphism group of
$\OA$ is defined by the one-parameter family of automorphisms
$\rho^A_t, t \in \T$ defined by 
the correspondence
$S_i \longrightarrow \exp(2\pi\sqrt{-1}t) S_i, t \in \R/\Z = \T, i=1,\dots,N$. 
The fixed point algebra of $\OA$ under $\rho^A$
is an AF algebra defined by the matrix 
$A$ (\cite{CK}).
It is called the standard AF-subalgebra of $\OA$ denoted by $\F_A$.
Let us denote by $C(X_A,\Z)$
the set of integer valued continuous functions 
on $X_A$.
A continuous function $f \in C(X_A,\Z)$
gives rise to an element of $C(X_A)$ and hence of $\DA$.
Since $\exp{(2\pi\sqrt{-1}t f)}$ is a unitary in $\DA$
for each $t \in \T$, 
the correspondence
$S_i \longrightarrow \exp{(2\pi\sqrt{-1}t f)}S_i, i=1,2,\dots,N, t \in \T$
yields an automorphism of $\OA$ written $\rho^{A,f}_t$.
The family of automorphisms $\rho^{A,f}_t, t \in \T$
defines an action of $\T$ on the $C^*$-algebra $\OA$.
It is called the gauge action with potential $f$, or a generalized gauge action.
For $f\equiv 1$, the action $\rho^{A,1}$ coincides with the gauge action
$\rho^A$ that is in particular called the standard gauge action on $\OA$.

In the first half of the paper, 
 we will study a family $\F_{A,f}, f \in C(X_A,\Z)$ 
 of $C^*$-subalgebras of $\OA$
introduced in \cite{MaPre2020d}.
They are defined in the following way.   
\begin{definition}[{\cite[Definition 2.5]{MaPre2020d}}]
For $f \in C(X_A, \Z)$, define a $C^*$-subalgebra $\FAf$ of $\OA$
by  the fixed point subalgebra of $\OA$ under the action $\rho^{A,f}$
\begin{equation}
\FAf : = 
\{ X \in \OA \mid \rho^{A,f}_t(X) = X \text{ for all } t \in \T\}.
\end{equation}  
We call the $C^*$-algebra $\FAf$ the {\it cocycle algebra for}\/ $f$.
\end{definition}
For constant functions $f \equiv 0$ and $f\equiv 1$,
we know that $\F_{A,0} = \OA$ and $\F_{A,1} = \F_A$, respectively.
Hence the family $\F_{A,f}, f \in C(X_A,\Z)$ 
of $C^*$-subalgebras of $\OA$ generalize both $\OA$ and $\F_A$.
In \cite{MaPre2020d}, the following result was proved. 
\begin{theorem}[{\cite[Therem 1.4]{MaPre2020d}, cf. \cite{MaPre2020b}}] \label{thm:1.2}
Let $A$ and $B$ be irreducible non permutation matrices with entries in $\{0,1\}$.
Then the one-sided topological Markov shifts $(X_A,\sigma_A)$ and $(X_B,\sigma_B)$
are topologically conjugate if and only if 
there exists an isomorphism $\Phi: \OA\longrightarrow \OB$ 
of $C^*$-algebras such that 
$\Phi(\DA) = \DB$ and
\begin{equation}
\Phi(\F_{A,f}) = \F_{B,\Phi(f)} \quad \text{ for all } f \in C(X_A,\Z),
\end{equation}
where $\Phi(f) \in C(X_B, \Z)$ for $f \in C(X_A,\Z)$.
\end{theorem}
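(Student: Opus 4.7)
My plan is to pass to the \'etale groupoid picture and realize each cocycle algebra $\FAf$ as the $C^*$-algebra of a kernel subgroupoid. Model $\OA \cong C^*(G_A)$, where $G_A$ is the Deaconu--Renault groupoid
\[
G_A = \{(x,k,y) \in X_A \times \Z \times X_A : \exists\, n,m \in \Zp,\ k = n-m,\ \sigma_A^n(x) = \sigma_A^m(y)\},
\]
with unit space $X_A$ corresponding to $\DA$. For $f \in C(X_A, \Z)$, define the continuous cocycle $c_f : G_A \to \Z$ by $c_f(x,n-m,y) = \sum_{k=0}^{n-1} f(\sigma_A^k x) - \sum_{k=0}^{m-1} f(\sigma_A^k y)$. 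A direct computation shows $\rho^{A,f}$ is the $\T$-action dual to $c_f$, so $\FAf = C^*(c_f^{-1}(0))$; in particular $c_1$ is the degree cocycle $d_A(x,k,y)=k$.

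For the \emph{only if} direction, I would start from a topological conjugacy $h : X_A \to X_B$ and lift it functorially to a groupoid isomorphism $\tilde h : G_A \to G_B$ via $(x,k,y) \mapsto (h(x),k,h(y))$, which induces the desired $\Phi : \OA \to \OB$. On the unit space $\tilde h$ restricts to $h$, so $\Phi(\DA) = \DB$ and $\Phi(f) = f \circ h^{-1} \in C(X_B,\Z)$. A short Birkhoff-sum computation using $h \circ \sigma_A = \sigma_B \circ h$ then gives $c_{\Phi(f)} \circ \tilde h = c_f$, whence $\Phi(\FAf) = \F_{B, \Phi(f)}$.

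For the \emph{if} direction, I would use that $G_A$ is topologically principal for an irreducible non-permutation matrix, so Renault's reconstruction theorem promotes the assumption $\Phi(\DA) = \DB$ to a groupoid isomorphism $\tilde h : G_A \to G_B$ whose restriction to the unit space is the homeomorphism $h : X_B \to X_A$ dual to $\Phi|_{\DA}$. The hypothesis $\Phi(\FAf) = \F_{B, \Phi(f)}$ translates to $\tilde h(c_f^{-1}(0)) = c_{\Phi(f)}^{-1}(0)$ for every $f \in C(X_A, \Z)$. Exploiting the additivity of $f \mapsto c_f$ and testing against indicator functions of cylinder sets, I would upgrade this to the pointwise identity $c_{\Phi(f)} \circ \tilde h = c_f$ on all of $G_A$; specializing to $f \equiv 1$ yields $d_B \circ \tilde h = d_A$. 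Evaluating at $(x, 1, \sigma_A(x)) \in G_A$ then forces $h^{-1} \circ \sigma_A = \sigma_B \circ h^{-1}$, giving the required conjugacy.

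\textbf{The main obstacle} is the passage from ``kernels match'' to ``values match'' in the if direction: fixed-point subalgebras only retain $\ker c_f$, so one must argue that as $f$ ranges over all of $C(X_A, \Z)$ the family $\{c_f\}$ is rich enough that pairwise matching of kernels forces matching of values. This is precisely where the quantifier ``for all $f$'' in the hypothesis is indispensable; a single choice such as $f \equiv 1$ would only yield eventual conjugacy, not topological conjugacy, and the subtlety is to find enough separating $f$'s (characteristic functions of short cylinders suffice) to pin down the degree cocycle exactly.
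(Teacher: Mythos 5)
First, a caveat on the comparison: the paper does not prove Theorem \ref{thm:1.2} at all --- it is imported from \cite[Theorem 1.4]{MaPre2020d}, and the groupoid statements of Section 2.4 (Proposition \ref{prop:coeandcocycle}, Corollary \ref{cor:conjugacygroupoids}) are deduced \emph{from} it, not the other way around. So your proposal can only be judged on its own terms and against the dictionary the paper does set up. That dictionary agrees with your framework: $\OA\cong C^*(G_A)$ with $\DA\cong C(G_A^{(0)})$, $\FAf\cong C^*(G_{A,f})=C^*(c_f^{-1}(0))$ (Theorem \ref{thm:main1}), and diagonal-preserving isomorphisms correspond to groupoid isomorphisms (Proposition \ref{prop:coeandcocycle}). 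Your ``only if'' direction is correct and complete.

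The ``if'' direction, however, has a genuine gap at exactly the place you flag as the main obstacle, and the two sentences with which you close that direction do not fill it. (i) For a fixed $\gamma\in G_A$, the maps $f\mapsto c_f(\gamma)$ and $f\mapsto c_{\Phi(f)}(\tilde h(\gamma))$ are group homomorphisms $C(X_A,\Z)\to\Z$; the hypothesis only tells you they have the same kernel, and two $\Z$-valued homomorphisms with equal kernels need only be \emph{proportional} (with a $\gamma$-dependent rational constant), not equal. Upgrading ``kernels match'' to ``values match'' therefore requires an actual argument (normalizing the constant $q_\gamma$ for every $\gamma$, e.g.\ by producing test functions on which both cocycles take the value $1$, and controlling $q_\gamma$ via continuity and the cocycle identity); you assert it can be done with indicators of short cylinders but do not do it. (ii) Even granting the full identity $c_{\Phi(f)}\circ\tilde h=c_f$, your final step invokes only $f\equiv 1$: but $d_B\circ\tilde h=d_A$ says merely that $\sigma_B^{n}(h(x))=\sigma_B^{\,n-1}(h(\sigma_A x))$ for \emph{some} $n$, i.e.\ one-sided eventual conjugacy --- which, as you yourself note earlier in the same paragraph, is strictly weaker than topological conjugacy. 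Forcing $n=1$ requires deploying the cocycles $c_{\chi_{U_\mu}}$ for small cylinders to exclude the degenerate (pre-periodic) configurations, and this is precisely where the quantifier ``for all $f$'' must be consumed; it is the real content of \cite{MaPre2020d}. As written, the proposal correctly identifies the difficulty but does not resolve it, and its last step is circular with respect to its own caveat.
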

Since $\F_{A,0} = \OA, \F_{A,1} = \FA$ and 
$\cap_{f \in C(X_A,\Z)} \F_{A,f} = \DA$ (Proposition \ref{prop:FAfDA}),
the isomorphisms classes of the family of cocycle algebras 
$\FAf$ for $f \in C(X_A,\Z)$ completely 
determine the  topological conjugacy class of 
the one-sided topological Markov shift $(X_A,\sigma_A)$.
Hence it seems to deserve to study the family of cocycle algebras 
$\FAf$ for $f \in C(X_A,\Z)$.
In this paper, we will investigate
the family of the $C^*$-subalgebras.

Let us denote by $\Zp$ the set of nonnegative integers.
It is well-known that the algebra $\OA$ is realized as the $C^*$-algebra 
$C^*(G_A)$ of an amenable \'etale groupoid $G_A$ defied by
\begin{align*}
G_A := \{ 
& (x, n, z) \in X_A\times \Z \times X_A \mid  \text{there exist } k,l \in \Zp \text{ such that } \\
& n = k-l,\,\,  \sigma_A^k(x) = \sigma_A^l(z) \}
\end{align*}
(cf. \cite{MatuiPLMS}, \cite{MatuiCrelle},  \cite{Renault2000}, \cite{Renault2}, \cite{Renault3}, etc.).
Put the unit space 
$G_A^{(0)} = \{(x, 0, x) \in G_A\mid  x \in X_A \}.$
Define the maps 
\begin{equation*}
s(x, n, z) = (z,0,z) \in G_A,\qquad
r(x, n, z) = (x,0,x) \in G_A. 
\end{equation*}
The product and inverse operation are defined by
\begin{equation*}
(x, n, z)\cdot  (z, m, w) = (x, n+m, w), \qquad
(x, n, z)^{-1} = (z, -n, x).
\end{equation*}
The unit space $G_A^{(0)}$ is naturally homeomorphic to the shift space
$X_A$.
The subgroupoid
\begin{equation*}
G_A^{\AF} =\{(x,0,z) \in G_A\mid x,z,\in X_A\}
\end{equation*}
is an AF-groupoid whose $C^*$-algebra is isomorphic to the standard AF-algebra
$\F_A$ (cf. \cite{CK}, \cite{PutnamAMS}).

For $f \in C(X_A,\Z)$ and $n \in \N$,
we define $f^n \in C(X_A,\Z)$ by setting 
\begin{equation} \label{eq:fnx}
f^n(x) =\sum_{i=0}^{n-1}f(\sigma_A^i(x)), \qquad x \in X_A.
\end{equation} 
 For $n=0$, we put $f^0\equiv 0$.
 It is straightforward to see that 
 the identity
$ f^{n+k}(x) = f^n(x) + f^k(\sigma_A^n(x)),\, x \in X_A, \, n,k \in \Zp
$
 holds.
\begin{definition}
For $f\in C(X_A,\Z)$,
define an  \'etale subgroupoid $G_{A,f}$ of $G_A$ by 
\begin{align*}
G_{A,f} := \{ &
(x, n, z) \in X_A\times \Z \times X_A \mid  \text{there exist } k,l \in \Zp \text{ such that }\\
&  n= k-l, \, \sigma_A^k(x) = \sigma_A^l(z), \,  
f^k(x) = f^l(z) \}.
\end{align*}
It is called the
{\it  cocycle groupoid for}\/ $f$. 
\end{definition}

We will see the following result.

\begin{theorem}[{Proposition \ref{prop:essprinamenable}, Theorem \ref{thm:main1}}]
The groupoid $G_{A, f}$ 
is an essentially principal amenable  clopen \'etale subgroupoid of $G_A$
such that 
there exists an isomorphism $\varPhi: C^*(G_A) \longrightarrow \OA$
of $C^*$-algebra such that 
$\varPhi( C^*(G_{A,f})) =\F_{A,f}$
and
$\varPhi(C(G_A^{(0)})) = \DA$.
\end{theorem}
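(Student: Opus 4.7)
The plan is to realize $\FAf$ on the $C^*$-algebra side as the fixed-point algebra of a gauge-type $\T$-action on $C^*(G_A)$ coming from a continuous $\Z$-valued cocycle $c_f$ on $G_A$, and then identify $G_{A,f}$ as the kernel $c_f^{-1}(0)$. The starting point is the standard identification $\varPhi: C^*(G_A)\to\OA$ with $\varPhi(C(G_A^{(0)}))=\DA$, under which the canonical generators $S_i$ correspond to characteristic functions of compact open bisections of the form $\{(ix,1,x)\mid x\in X_A,\, A(i,x_1)=1\}$. This is part of the well-documented Renault/Kumjian picture of $\OA$ and can be quoted.

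Next I would construct the cocycle. For $(x,n,z)\in G_A$ pick $k,l\in\Zp$ with $n=k-l$ and $\sigma_A^k(x)=\sigma_A^l(z)$, and set
\[
c_f(x,n,z) := f^k(x)-f^l(z).
\]
Using the cocycle identity $f^{n+k}(x)=f^n(x)+f^k(\sigma_A^n(x))$, replacing $(k,l)$ by $(k+m,l+m)$ leaves the value unchanged, so $c_f$ is well defined; the groupoid cocycle identity $c_f(\gamma\eta)=c_f(\gamma)+c_f(\eta)$ is then a short computation. Since $f$ is continuous and $\Z$-valued, $c_f$ is locally constant on $G_A$, hence continuous. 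By the very definition of $G_{A,f}$ we have $G_{A,f}=c_f^{-1}(0)$, which makes $G_{A,f}$ a clopen subgroupoid; it is automatically étale as a clopen subgroupoid of the étale groupoid $G_A$. The dual $\T$-action on $C^*(G_A)$ defined by $\alpha_t(F)(\gamma)=e^{2\pi\sqrt{-1}t c_f(\gamma)}F(\gamma)$ for $F\in C_c(G_A)$ transports under $\varPhi$ to $\rho^{A,f}$, because on the generating bisection for $S_i$ the cocycle value is $f(x)$ (take $k=1$, $l=0$), matching the prescription $S_i\mapsto e^{2\pi\sqrt{-1}t f}S_i$. The standard fact that the fixed-point algebra of such a dual action is $C^*(c_f^{-1}(0))$ then gives $\varPhi(C^*(G_{A,f}))=\FAf$.

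For essential principality, irreducibility and non-permutation of $A$ ensure that aperiodic points are dense in $X_A$; at any aperiodic $x$, the isotropy in $G_A$ (hence in $G_{A,f}$) is trivial, so the interior of the isotropy bundle of $G_{A,f}$ equals its unit space. Amenability of $G_{A,f}$ follows from amenability of $G_A$ (itself a classical result for Cuntz--Krieger groupoids) together with the general principle that a closed subgroupoid of an amenable étale groupoid is amenable; since $c_f^{-1}(0)$ is clopen, this is immediate.

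The main obstacle I expect is the bookkeeping in showing that $\varPhi$ intertwines the cocycle-induced dual action with $\rho^{A,f}$ on all of $\OA$, not just on the generators. I would handle it by checking equality of the two actions on the dense $*$-subalgebra spanned by words $S_\mu S_\nu^*$ using the cocycle identity for $f^n$, which reduces the verification to evaluating $c_f$ on bisections of the form $\{(\mu x,|\mu|-|\nu|,\nu x)\}$ and matching it with $f^{|\mu|}(\mu x)-f^{|\nu|}(\nu x)$.
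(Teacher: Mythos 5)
Your proposal is correct, and it reaches the theorem by a more conceptual route than the paper. The paper never introduces the cocycle $c_f$: it proves that $G_{A,f}$ is clopen by a direct argument with basic neighborhoods $\mathcal{U}(U,k,l,V)$ and convergent nets, proves essential principality from $\Int(G_A^\prime)=G_A^{(0)}$ rather than from density of aperiodic points, and obtains $\varPhi(C^*(G_{A,f}))=\FAf$ by matching generators: it first shows, via the conditional expectation $E_{A,f}=\int_\T \rho^{A,f}_t\,dt$ and density of $\PA\cap\FAf$ in $\FAf$ (Proposition \ref{prop:generated}), that $\FAf$ is generated by the partial isometries $S_\mu S_\nu^*$ with $f^{|\mu|}S_\mu S_\nu^*=S_\mu S_\nu^* f^{|\nu|}$, and then checks that these are exactly the images of the characteristic functions $\chi_{V_{\mu,\nu}}$ generating $C^*(G_{A,f})$. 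Your cocycle $c_f$ with $G_{A,f}=c_f^{-1}(0)$ gives the clopen-ness essentially for free and replaces the generator bookkeeping by the single statement that $\varPhi$ intertwines the dual action of $c_f$ with $\rho^{A,f}$, after which averaging over $\T$ identifies the two fixed-point algebras; the underlying computations (the identity $f^{n+k}(x)=f^n(x)+f^k(\sigma_A^n(x))$ evaluated on words $S_\mu S_\nu^*$) are the same in both treatments, so each approach buys what the other pays for --- yours is shorter and more structural, the paper's is more elementary and self-contained. The one point you pass over that the paper makes explicit (Lemma \ref{lem:embedHA}) is why the closure of $C_c(G_{A,f})$ inside $C^*(G_A)$ is the universal $C^*$-algebra $C^*(G_{A,f})$ rather than a proper quotient of it; this requires either amenability of $G_{A,f}$ (which you do establish) or the faithfulness criterion the paper quotes, so it is a gap of citation rather than of substance.
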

Simplicity condition of the $C^*$-algebra $\F_{A,f}$
is obtained in Proposition \ref{prop:simplicity}.
We will then see that
there exists an isomorphism $\Phi: \OA\longrightarrow \OB$ 
of $C^*$-algebras such that 
$\Phi(\DA) = \DB$ and $\Phi(\F_{A,f}) = \F_{B,g}$ if and only if
there exists an isomorphism $\varphi: G_A\longrightarrow G_B$
of \'etale groupoids such that 
$\varphi(G_{A,f}) = G_{B,g}$
(Proposition \ref{prop:coeandcocycle}).
Therefore we will know the following characterization of 
topological conjugacy of one-sided topological Markov shifts 
in terms of these \'etale groupoids in the following way.
\begin{theorem}[{Corollary \ref{cor:conjugacygroupoids}}]
One-sided topological Markov shifts
 $(X_A,\sigma_A)$ and $(X_B, \sigma_B)$
 are topologically conjugate if and only if there exists an isomorphism
 $\varphi:G_A\longrightarrow G_B$ of \'etale groupoids
 such that
 $\varphi(G_{A, g\circ h}) = G_{B,g}$ for all 
 $g \in C(X_B,\Z),$ 
where $h:X_A \longrightarrow X_B$
is a homeomorphism defined by the restriction of $\varphi$
to its unit space $G_A^{(0)}$ under the identification between 
$G_A^{(0)}$ and $X_A$, and  $G_B^{(0)}$ and $X_B$, respectively. 
\end{theorem}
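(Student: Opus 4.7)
The plan is to stitch together Theorem~\ref{thm:1.2} and Proposition~\ref{prop:coeandcocycle}, with the only nontrivial bookkeeping being to track the homeomorphism $h$ consistently on both the $C^*$-algebraic and the groupoid sides. The key bridging observation is that a $C^*$-algebra isomorphism $\Phi: \OA \to \OB$ with $\Phi(\DA) = \DB$ induces, via Gelfand duality between $\DA \cong C(X_A)$ and $\DB \cong C(X_B)$, a homeomorphism $h: X_A \to X_B$ with $\Phi(f) = f \circ h^{-1}$ for all $f \in C(X_A, \Z)$; this is precisely the homeomorphism obtained by restricting the corresponding étale groupoid isomorphism $\varphi: G_A \to G_B$ to its unit space, under the identifications $G_A^{(0)} \cong X_A$ and $G_B^{(0)} \cong X_B$.

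For the ``only if'' direction, I assume $(X_A, \sigma_A)$ and $(X_B, \sigma_B)$ are topologically conjugate. Theorem~\ref{thm:1.2} yields $\Phi: \OA \to \OB$ with $\Phi(\DA) = \DB$ and $\Phi(\F_{A,f}) = \F_{B,\Phi(f)}$ for every $f \in C(X_A, \Z)$. Let $h: X_A \to X_B$ be the homeomorphism induced by $\Phi|_{\DA}$, so that $\Phi(f) = f \circ h^{-1}$. Given an arbitrary $g \in C(X_B, \Z)$, set $f := g \circ h \in C(X_A, \Z)$; then $\Phi(f) = g$, and hence $\Phi(\F_{A, g \circ h}) = \F_{B, g}$. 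Proposition~\ref{prop:coeandcocycle} then supplies an étale groupoid isomorphism $\varphi: G_A \to G_B$ with $\varphi(G_{A, g \circ h}) = G_{B, g}$ for all $g \in C(X_B, \Z)$, whose restriction to the unit space realizes the same $h$.

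For the ``if'' direction, I assume $\varphi: G_A \to G_B$ is an étale groupoid isomorphism satisfying $\varphi(G_{A, g \circ h}) = G_{B, g}$ for every $g \in C(X_B, \Z)$, where $h$ is the unit-space restriction. Applying Proposition~\ref{prop:coeandcocycle}, I obtain a $C^*$-algebra isomorphism $\Phi: \OA \to \OB$ with $\Phi(\DA) = \DB$ and $\Phi(\F_{A, g \circ h}) = \F_{B, g}$ for all $g$, where $\Phi|_{\DA}$ corresponds to $h$, i.e.\ $\Phi(f) = f \circ h^{-1}$ for $f \in C(X_A, \Z)$. Given an arbitrary $f \in C(X_A, \Z)$, set $g := f \circ h^{-1} \in C(X_B, \Z)$; then $f = g \circ h$ and $\Phi(f) = g$, so $\Phi(\F_{A, f}) = \F_{B, g} = \F_{B, \Phi(f)}$. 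The hypotheses of Theorem~\ref{thm:1.2} are therefore verified, yielding the desired topological conjugacy.

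The only potential pitfall is notational: one must confirm that the homeomorphism $h$ extracted from the restriction of $\varphi$ to unit spaces agrees, under Gelfand duality, with the homeomorphism induced by $\Phi|_{\DA}$, and that the composition $f \mapsto f \circ h^{-1}$ points in the correct direction on both sides. Once this compatibility—already implicit in Proposition~\ref{prop:coeandcocycle}—is fixed, the substitution $f = g \circ h$ precisely reindexes the family used in Theorem~\ref{thm:1.2} by the parameter $g \in C(X_B, \Z)$ appearing in the corollary, and no further computation is required.
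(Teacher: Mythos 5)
Your proposal is correct and follows essentially the same route as the paper: the paper's own proof simply restates Theorem \ref{thm:1.2} in the reindexed form $\Phi(\F_{A,g\circ h})=\F_{B,g}$ and then invokes Proposition \ref{prop:coeandcocycle}. Your version merely makes explicit the change of variables $f=g\circ h$ and the compatibility of $h$ with $\Phi|_{\DA}$ under Gelfand duality, which the paper leaves implicit.
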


In the second half of the paper, we will 
study the following three subcalsses of cocycle algebras.
\begin{definition}\hspace{6cm}
\begin{enumerate}
\renewcommand{\theenumi}{\roman{enumi}}
\renewcommand{\labelenumi}{\textup{(\theenumi)}}
\item
For a subset $H \subset \{1,2,\dots, N\}$,
let 
 $\chi_H \in C(X_A,\Z)$ be the function defined by
\begin{equation}\label{eq:fHmu1}
\chi_H((x_n)_{n \in \N}) =
\begin{cases}
1 & \text{ if } x_1 \in H, \\
0 & \text{ otherwise.} 
\end{cases} 
\end{equation}
Then the function 
$\chi_H$ gives rise to an element of $C(X_A,\Z)$.
We set $\F_{A,H} := \F_{A,\chi_H}$ the cocycle algebra for $\chi_H$.
It is called the {\it support algebra for}\/ $H$.
\item
 For a function $b \in C(X_A, \Z)$, 
 put $1_b= 1 - b + b\circ\sigma_A \in C(X_A,\Z).$
We write
$\F_A^b := \F_{A, 1_b}$ the cocycle algebra for $1_b$.
It is called the {\it coboundary algebra for}\/ $b$.
\item
For a positive integer valued function $f \in C(X_A, \N)$, the cocycle algebra $\FAf$ is called the 
{\it suspension algebra}\/ for $f$.
\end{enumerate}
\end{definition}
 We will finally study isomorphism classes, stable isomorohism classes
of these cocycle algebras belonging to the above three classes
from different view points.  
A subset $H\subset \{1,2,\dots,N\}$ 
is said to be {\it saturated}\/ if any periodic word of $B_*(X_A)$
contains a symbol belonging to $H$.
  We thus finally obtain the following results.
\begin{theorem}[{Theorem \ref{thm:support}, Theorem \ref{thm:coboundary} 
and Theorem \ref{thm:suspension}}] 
Let $A$ be an irreducible non permutation matrix with entries in $\{0,1\}$.
 \hspace{4cm}
\begin{enumerate}
\renewcommand{\theenumi}{\roman{enumi}}
\renewcommand{\labelenumi}{\textup{(\theenumi)}}
\item
For a saturated subset $H\subset \{1,2,\dots,N\}$,
the support algebra $\F_{A,H}$ is a unital AF-algebra defined 
by a certain inclusion matrix $A_H$ defined in \eqref{eq:matrixAH}.
Furthermore, if $H\subset \{1,2,\dots,N\}$ is primitive
in the sense of Definition \ref{def:primitive},
the AF-algebra  $\F_{A,H}$ is simple.
\item Assume that $A$ is primitive. 
 For $b \in C(X_A,\Z)$, 
the coboundary algebra $\F_A^b$ is a unital simple AF-algebra stably isomorphic to 
the standard AF-algebra $\FA$ of $\OA$.
\item  Assume that $A$ is primitive.
For $f \in C(X_A,\N)$, 
the suspension algebra $\F_{A,f}$ is a unital simple AF-algebra stably isomorphic to 
the standard AF-algebra defined by the suspended matrix of the 
$K$-higher block matrix $A^{[K]}$ of $A$ 
 by the ceiling function $f$ for some $K$.
\end{enumerate}
\end{theorem}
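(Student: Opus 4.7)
The approach is to realize each cocycle algebra as the $C^*$-algebra of its cocycle groupoid $G_{A,f}$ via the main theorem, which identifies $\F_{A,f}$ with $C^*(G_{A,f})$, and then to show directly that $G_{A,f}$ is an AF-groupoid by exhibiting it as an increasing union of compact principal \'etale open subgroupoids with unit space $X_A$. Unitality is automatic from compactness of $X_A$. The specific form of $f$ in each of (i)--(iii) dictates the appropriate filtration. Throughout, the identity \eqref{eq:fnx} together with the cocycle condition $f^k(x)=f^l(z)$ and $\sigma_A^k(x)=\sigma_A^l(z)$ is used to read off the combinatorial compatibility between admissible words that the groupoid imposes.

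For (i), the function $f=\chi_H$ satisfies $f^k(x)=|x_1\cdots x_k|_H$, the number of positions of $x_1,\dots,x_k$ lying in $H$. Accordingly, $G_{A,\chi_H}$ consists of those $(x,n,z)$ for which the initial blocks of $x$ and $z$ have equal $H$-length, and the natural filtration is by the common value $m:=|\mu|_H=|\nu|_H$. The corresponding finite-dimensional subalgebra is the linear span of elements $S_\mu S_\nu^*$ with $|\mu|_H=|\nu|_H=m$ and a common range projection. Saturation of $H$ is exactly what guarantees that every admissible word can be prolonged so that its $H$-count grows without bound, making the union of these finite-dimensional subalgebras dense in $\F_{A,H}$. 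The inclusion matrix $A_H$ then records, at each level $m$, the multiplicity with which level-$m$ words extend to level-$(m+1)$ words in the resulting Bratteli diagram. For the simplicity claim under primitivity of $H$, the plan is to apply Proposition~\ref{prop:simplicity} after checking that $G_{A,\chi_H}$ is minimal, which is an immediate consequence of the primitivity hypothesis.

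For (ii), the telescoping computation $(1_b)^n(x)=n-b(x)+b(\sigma_A^n(x))$ shows that the cocycle condition in $G_A^b=G_{A,1_b}$ collapses to $n=b(x)-b(z)$, since $\sigma_A^k(x)=\sigma_A^l(z)$ forces $b(\sigma_A^k(x))=b(\sigma_A^l(z))$. Thus the integer coordinate $n$ is determined by the endpoints, and the assignment $(x,n,z)\mapsto (x,0,z)$ induces a groupoid isomorphism between $G_A^b$ and the AF-subgroupoid $G_A^{\AF}$ after twisting by the $1$-cocycle $b$. Consequently $G_A^b$ is AF and, under primitivity of $A$, simple. Stable isomorphism of $\F_A^b$ with $\F_A$ is then obtained either by computing $K_0$ and invoking Elliott's classification of simple AF-algebras, or by recognizing the cocycle twist as a Morita equivalence of \'etale groupoids and passing to $C^*$-algebras.

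For (iii), the first step is to absorb the dependence of $f\in C(X_A,\N)$ on multiple coordinates by passing to the $K$-higher block shift $X_{A^{[K]}}$, where $K$ is taken large enough that $f$ factors through the projection onto the first symbol. This reduces the problem to a strictly positive ceiling function on the vertex set of $A^{[K]}$, for which the standard suspension construction replaces each vertex $i$ by a tower of height $f(i)$ and produces the suspended matrix referred to in the statement. Under this reduction, $G_{A,f}$ is identified with the AF-subgroupoid of the suspended topological Markov shift, and the asserted stable isomorphism follows. The main obstacle across the whole proof is matching the explicit Bratteli diagrams---in particular the inclusion matrix $A_H$ in (i) and the suspended matrix in (iii)---to the groupoid filtrations with full combinatorial accuracy; once these identifications are in place, the AF and simplicity conclusions follow by the general groupoid machinery already established in the paper.
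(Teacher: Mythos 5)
Your overall strategy---realize each algebra as $C^*(G_{A,f})$ and exhibit a groupoid filtration---is viable for (i) and (iii), but it diverges from the paper, which argues almost entirely inside the algebras. For (i) the paper sets $\F_{A,H}^n = \Span\{S_\mu S_\nu^* \mid N_H(\mu)=N_H(\nu)=n\}$, uses saturation to show each such set of words is finite, checks closure under products, shows $1\in\F_{A,H}^1$ and $\F_{A,H}^n\subset\F_{A,H}^{n+1}$ by means of the finite code $\Sigma_H$ of words ending in $H$, and then deduces simplicity from primitivity of the inclusion matrix $A_H$ via the standard Bratteli-diagram criterion---not via Proposition \ref{prop:simplicity}. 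Your assertion that minimality of $G_{A,\chi_H}$ is ``an immediate consequence'' of primitivity of $A_H$ still requires an argument (and deducing minimality from simplicity would be circular). For (iii) the paper realizes $\OA$ as the corner $P_A\OAf P_A$ with $P_A=\sum_j \tS_{j_0}\tS_{j_0}^*$, checks that the standard gauge action of $\OAf$ restricts to $\rho^{A,f}$ under this identification, and concludes that $\FAf=P_A\F_{A_f}P_A$ is a full corner of the simple AF-algebra $\F_{A_f}$; your groupoid reduction is the same idea in different clothing and is fine in outline.

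The genuine gap is in (ii). The map $(x,n,z)\mapsto(x,0,z)$ does not carry $G_A^b$ into $G_A^{\AF}$: an element of $G_A^b$ satisfies only $\sigma_A^k(x)=\sigma_A^l(z)$ with $k-l=b(x)-b(z)$, which is generally nonzero, whereas $(x,0,z)\in G_A^{\AF}$ requires lag-zero tail equivalence of $x$ and $z$. Worse, a unit-space-preserving isomorphism $G_A^b\cong G_A^{\AF}$ would yield a \emph{unital} isomorphism $\F_A^b\cong\FA$, which is strictly stronger than the stated stable isomorphism and fails in general because the class of the unit in $K_0$ can move (the paper proves plain isomorphism only in the special case $b=d_\tau$ for $\tau$ in the continuous full group). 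The correct relation is a Morita/Kakutani equivalence, and the paper obtains it by a different mechanism: $\rho^{A,1_b}_t=\Ad(u_t)\circ\rho^A_t$ with $u_t=\exp(2\pi\sqrt{-1}\,bt)\in\DA$, so the crossed products $\OA\rtimes_{\rho^{A,1_b}}\T$ and $\OA\rtimes_{\rho^A}\T$ are isomorphic, the latter being stably isomorphic to $\FA$ by Cuntz; then $\F_A^b=(\OA)^{\rho^{A,1_b}}$ is a corner of the crossed product by Rosenberg's result, and fullness (hence stable isomorphism, and AF-ness of the corner) uses simplicity, which the paper establishes separately by a direct case-by-case verification that $1_b$ is minimal when $A$ is primitive. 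Your fallback of ``computing $K_0$ and invoking Elliott'' is not carried out and presupposes that $\F_A^b$ is already known to be AF, which in your write-up rests on the flawed isomorphism.
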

 The proofs are given by three different ways for each.
 

 Throughout the paper,
 $S_1,\dots, S_N$ denote the canonical generating partial isometries 
 of $\OA$ satisfying 
 $\sum_{j=1}^N S_j S_j^* =1, \, S_i^* S_i = \sum_{j=1}^N A(i,j)S_j S_j^*, i=1,\dots, N$.
 For a word $\mu = (\mu_1,\dots,\mu_m) \in B_m(X_A)$, the partial isometry
$S_\mu $ is defined by $S_\mu =  S_{\mu_1}\cdots S_{\mu_m}$ in $\OA$.

\medskip

The contents of the present  paper is the following:

\medskip

1. Introduction and Prelimary

2. Cocycle algebras and cocycle groupoids

\hspace{5mm} 2.1. Basic properties of cocycle algebras

\hspace{5mm} 2.2. Basic properties of cocycle groupoids

\hspace{5mm} 2.3. The $C^*$-algebra $C^*(G_{A,f})$ 


\hspace{5mm} 2.4. Continuous orbit equivalence 

3. Three classes of cocycle algebras

\hspace{5mm}  3.1. Support algebras

\hspace{5mm}  3.2. Coboundary algebras

\hspace{5mm}  3.3. Suspension algebras


\section{Cocycle algebras and cocycle groupoids}
\subsection{Basic properties of cocycle algebras}
In this subsection, 
we present several basic lemmas to study cocycle algebras.
Take and fix an irreducible non permutation matrix $A$
and a continuous function $f \in C(X_A,\Z)$.
It is easy to see that 
$\DA \subset \FAf$ for any $f \in C(X_A,\Z)$.
For a word $\mu =(\mu_1,\dots,\mu_m) \in B_m(X_A)$,
let us denote by $|\mu|$ the length $m$ of $\mu$.
We first note the following lemma.
\begin{lemma}[{\cite[Lemma 3.1]{MaMZ}}] \label{lem:rhoafsmu}
For $\mu \in B_*(X_A)$, the  identity 
\begin{equation}
\rho^{A,f}_t(S_\mu) = \exp{(2\pi\sqrt{-1}f^{|\mu|} t)} S_\mu,\qquad
t \in \T  \label{eq:rhoafsmu}
\end{equation}
holds, where $f^{|\mu|}\in C(X_A, \Z)$ is defined by \eqref{eq:fnx}.
\end{lemma}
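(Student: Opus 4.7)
The plan is to proceed by induction on $m=|\mu|$, using a standard commutation relation between the Cuntz--Krieger generators and elements of the diagonal subalgebra $\DA$. The base case $m=1$ is immediate from the very definition of $\rho^{A,f}_t$, because $f^{1}=f$ yields $\rho^{A,f}_t(S_{\mu_1}) = \exp(2\pi\sqrt{-1}\,tf)\,S_{\mu_1}$, which is exactly the claimed identity.

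The key input for the inductive step is the auxiliary identity
\begin{equation*}
S_i\, g \;=\; (g\circ\sigma_A)\, S_i, \qquad g \in \DA,\ i \in \{1,\dots,N\}.
\end{equation*}
I would verify this first on the dense family of cylinder projections $\chi_{U_\nu}$. Writing $S_i\chi_{U_\nu}=S_i\chi_{U_\nu}S_i^*S_i$ and using $S_i S_\nu S_\nu^* S_i^* = \chi_{U_{i\nu}}$ when $i\nu$ is admissible (both sides vanishing otherwise), one obtains $S_i\chi_{U_\nu} = \chi_{U_{i\nu}}\, S_i$. The pointwise factorisation $\chi_{U_{i\nu}} = \chi_{U_i}\cdot(\chi_{U_\nu}\circ\sigma_A)$, together with $\chi_{U_i}S_i = S_i$, rewrites the right-hand side as $(\chi_{U_\nu}\circ\sigma_A)\,S_i$. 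Linearity and norm continuity extend the identity to every $g\in\DA$, and iterating yields $S_\mu\, g = (g\circ\sigma_A^{|\mu|})\, S_\mu$ for every admissible word $\mu$.

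With these tools in hand, I can close the induction. Assuming the lemma for $\mu$ with $|\mu|=m$ and writing $S_{\mu i}=S_\mu S_i$, the homomorphism property of $\rho^{A,f}_t$ gives
\begin{equation*}
\rho^{A,f}_t(S_\mu S_i) \;=\; \exp(2\pi\sqrt{-1}\,t f^{m})\, S_\mu \cdot \exp(2\pi\sqrt{-1}\,tf)\, S_i.
\end{equation*}
Applying the auxiliary identity to the diagonal unitary $\exp(2\pi\sqrt{-1}\,tf)\in\DA$ replaces $f$ by $f\circ\sigma_A^{m}$, and combining the two diagonal exponentials produces $\exp\bigl(2\pi\sqrt{-1}\,t(f^{m}+f\circ\sigma_A^{m})\bigr)\, S_\mu S_i$. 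The cocycle identity $f^{m+1}(x)=f^{m}(x)+f(\sigma_A^{m}(x))$ recorded just before the lemma then concludes the inductive step.

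I do not anticipate any real obstacle. The whole argument reduces to the commutation relation $S_i g=(g\circ\sigma_A)S_i$ on $\DA$, which is a direct and standard consequence of the Cuntz--Krieger relations via the partial isometry identity $S_iS_i^*S_i=S_i$; everything else is a routine telescoping based on the additive cocycle property of $f^n$.
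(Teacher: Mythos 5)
Your proof is correct. The paper itself gives no argument for this lemma --- it is imported verbatim as \cite[Lemma 3.1]{MaMZ} --- and your induction on $|\mu|$ via the commutation rule $S_i\,g=(g\circ\sigma_A)S_i$ for $g\in\DA$, combined with the cocycle identity $f^{m+1}=f^{m}+f\circ\sigma_A^{m}$, is exactly the standard computation one would expect behind the cited reference (the paper itself uses the equivalent identity $g\circ\sigma_A^{m}=\sum_{\xi\in B_m(X_A)}S_\xi gS_\xi^*$ in later proofs), so nothing is missing.
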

The following is basic to analyze the structure of the algebra $\FAf$. 
\begin{lemma} \label{lem:smusnufaf} 
For $\mu, \nu \in B_*(X_A)$, the partial isometry 
$S_\mu S_\nu^*$ belongs to $\FAf$ if and only if 
$f^{|\mu|} S_\mu S_\nu^* = S_\mu S_\nu^*  f^{|\nu|}$.
\end{lemma}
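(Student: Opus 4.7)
The plan is to compute $\rho^{A,f}_t(S_\mu S_\nu^*)$ explicitly using Lemma~\ref{lem:rhoafsmu} and then extract the stated commutation identity as the infinitesimal content of the fixed-point condition. Concretely, since $\rho^{A,f}_t$ is a $*$-automorphism, one has
\begin{equation*}
\rho^{A,f}_t(S_\mu S_\nu^*) \;=\; \rho^{A,f}_t(S_\mu)\,\rho^{A,f}_t(S_\nu)^*
\;=\; \exp(2\pi\sqrt{-1}\, f^{|\mu|} t)\, S_\mu S_\nu^*\, \exp(-2\pi\sqrt{-1}\, f^{|\nu|} t),
\end{equation*}
where the exponentials lie in the unital abelian $C^*$-algebra $\DA$. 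Hence $S_\mu S_\nu^* \in \FAf$ is equivalent to
\begin{equation*}
\exp(2\pi\sqrt{-1}\, f^{|\mu|} t)\, S_\mu S_\nu^* \;=\; S_\mu S_\nu^* \,\exp(2\pi\sqrt{-1}\, f^{|\nu|} t)
\qquad \text{for all } t\in\T.
\end{equation*}

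For the $(\Leftarrow)$ direction, starting from $f^{|\mu|} S_\mu S_\nu^* = S_\mu S_\nu^* f^{|\nu|}$, an immediate induction gives $(f^{|\mu|})^n S_\mu S_\nu^* = S_\mu S_\nu^* (f^{|\nu|})^n$ for every $n\ge 0$. Inserting this into the norm-convergent power series defining the exponential yields the displayed identity of exponentials for all $t$, so $\rho^{A,f}_t(S_\mu S_\nu^*) = S_\mu S_\nu^*$ and hence $S_\mu S_\nu^*\in\FAf$.

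For the $(\Rightarrow)$ direction, I would differentiate the displayed exponential identity at $t=0$. The maps $t\mapsto \exp(\pm 2\pi\sqrt{-1} f^{|\mu|} t)$ and $t\mapsto \exp(\pm 2\pi\sqrt{-1} f^{|\nu|} t)$ are norm-differentiable in the unital $C^*$-algebra $\DA$ because $f^{|\mu|}$ and $f^{|\nu|}$ are bounded elements (continuous functions on the compact space $X_A$), with derivative at $0$ equal to $2\pi\sqrt{-1}f^{|\mu|}$ and $2\pi\sqrt{-1}f^{|\nu|}$ respectively. Using the product rule on both sides (the middle factor $S_\mu S_\nu^*$ is constant in $t$) and evaluating at $t=0$ yields
\begin{equation*}
2\pi\sqrt{-1}\, f^{|\mu|}\, S_\mu S_\nu^* \;=\; 2\pi\sqrt{-1}\, S_\mu S_\nu^*\, f^{|\nu|},
\end{equation*}
which after cancelling the scalar $2\pi\sqrt{-1}$ gives exactly $f^{|\mu|} S_\mu S_\nu^* = S_\mu S_\nu^* f^{|\nu|}$, as required.

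The only mildly delicate point is the justification of norm-differentiability of the one-parameter unitary group $t\mapsto \exp(2\pi\sqrt{-1}\,g\,t)$ for a self-adjoint $g\in\DA$, but this is standard (it follows at once from the power series expansion together with the estimate $\|\exp(2\pi\sqrt{-1}g t) - 1 - 2\pi\sqrt{-1}g t\| = O(t^2)$). Alternatively, one can avoid differentiation entirely by expanding both exponentials as power series in $t$, equating coefficients of $t^n$, and reading off the $n=1$ identity; the two approaches are equivalent and both rely only on the scalar $g = f^{|\mu|}$ or $g = f^{|\nu|}$ being integer-valued continuous functions on $X_A$, hence bounded self-adjoint elements of $\DA$.
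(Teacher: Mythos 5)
Your proof is correct and follows essentially the same route as the paper: both start from the formula $\rho^{A,f}_t(S_\mu S_\nu^*) = \exp(2\pi\sqrt{-1}f^{|\mu|}t)\,S_\mu S_\nu^*\,\exp(-2\pi\sqrt{-1}f^{|\nu|}t)$ of Lemma~\ref{lem:rhoafsmu} and reduce the fixed-point condition for all $t$ to the single algebraic identity $f^{|\mu|}S_\mu S_\nu^* = S_\mu S_\nu^* f^{|\nu|}$. The only (cosmetic) difference is in the extraction step: the paper rewrites the conjugated expression as a single exponential $\exp(2\pi\sqrt{-1}(f^{|\mu|}S_\mu S_\nu^* S_\nu S_\mu^* - S_\mu S_\nu^* f^{|\nu|} S_\nu S_\mu^*)t)\,S_\mu S_\nu^*$ and sets the exponent to zero, whereas you differentiate at $t=0$ (and use the power series for the converse), both of which are standard and valid.
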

\begin{proof}
By \eqref{eq:rhoafsmu}, we know that
\begin{align*}
\rho^{A,f}_t(S_\mu S_\nu^*) 
=&  \exp{(2\pi\sqrt{-1}f^{|\mu|} t)} S_\mu S_\nu^* \exp{(-2\pi\sqrt{-1}f^{|\nu|} t)} \\
=&  \exp{(2\pi\sqrt{-1}f^{|\mu|} t)} S_\mu S_\nu^* \exp{(-2\pi\sqrt{-1}f^{|\nu|} t)} 
S_\nu S_\mu^* S_\mu S_\nu^* \\
=&  \exp{(2\pi\sqrt{-1}( f^{|\mu|}S_\mu S_\nu^* S_\nu S_\mu^*
         - S_\mu S_\nu^* f^{|\nu|} S_\nu S_\mu^*) t)} S_\mu S_\nu^*.
\end{align*}
Hence $S_\mu S_\nu^* \in \FAf$ if and only if
\begin{equation}
  f^{|\mu|}S_\mu S_\nu^* S_\nu S_\mu^*
         - S_\mu S_\nu^* f^{|\nu|} S_\nu S_\mu^* =0. \label{eq:exp2}
\end{equation}
The equality \eqref{eq:exp2} is equivalent to the equality
$f^{|\mu|} S_\mu S_\nu^* = S_\mu S_\nu^*  f^{|\nu|}$.
\end{proof}
For $b \in C(X_A, \Z)$, we define 
$1_b\in C(X_A, \Z)$ by
$1_b(x) = 1 - b(x) + b(\sigma_A(x)), x \in X_A$. 
\begin{lemma}\label{lem:smusnufab}
For 
$\mu, \nu \in B_*(X_A)$, the partial isometry 
$S_\mu S_\nu^*$ belongs to $\F_{A,1_b}$ 
if and only if 
$(|\mu| - b) S_\mu S_\nu^* = S_\mu S_\nu^* (|\nu| - b)$.
\end{lemma}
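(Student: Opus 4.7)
The plan is to apply Lemma \ref{lem:smusnufaf} with $f = 1_b$, which reduces the claim to showing that
\[
 (1_b)^{|\mu|} S_\mu S_\nu^* = S_\mu S_\nu^* (1_b)^{|\nu|}
 \quad\Longleftrightarrow\quad
 (|\mu|-b) S_\mu S_\nu^* = S_\mu S_\nu^* (|\nu|-b).
\]
The two ingredients needed are a closed form for $(1_b)^n$ and the standard intertwining relation between $S_\mu$ and elements of $\DA$.

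First I would compute $(1_b)^n$ by telescoping. Since $1_b(x) = 1 - b(x) + b(\sigma_A(x))$, summing along $\sigma_A^i(x)$ for $i=0,\dots,n-1$ collapses to
\[
 (1_b)^n(x) = n - b(x) + b(\sigma_A^n(x)), \qquad x \in X_A.
\]
Next I would recall (or briefly re-derive from $S_\mu h S_\mu^* = (h\circ\sigma_A^{|\mu|})\chi_{U_\mu}$ together with $S_\mu(1-S_\mu^*S_\mu)=0$) the intertwining identity
\[
 (h\circ \sigma_A^{|\mu|})\, S_\mu = S_\mu\, h \qquad (h \in \DA),
\]
and, by taking adjoints and using that $b$ is real-valued, its companion $S_\nu^*(h\circ\sigma_A^{|\nu|}) = h\, S_\nu^*$.

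The final step is purely algebraic. Substituting the telescoped expression for $(1_b)^{|\mu|}$ on the left and for $(1_b)^{|\nu|}$ on the right, and using the two intertwining identities to push $b\circ\sigma_A^{|\mu|}$ through $S_\mu$ and $b\circ\sigma_A^{|\nu|}$ through $S_\nu^*$, both sides pick up the common term $S_\mu\, b\, S_\nu^*$, which cancels. What remains is exactly the asserted equivalence. The whole argument is a short chain of routine manipulations; there is no genuine obstacle, only the need to be careful that the intertwining relations are applied on the correct side of the partial isometry.
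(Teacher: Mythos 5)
Your proposal is correct and follows essentially the same route as the paper: reduce via Lemma \ref{lem:smusnufaf} with $f=1_b$, telescope $(1_b)^n = n - b + b\circ\sigma_A^n$, and use the intertwining of $\DA$ with $S_\mu$ (which the paper writes as $b\circ\sigma_A^m = \sum_{\xi\in B_m(X_A)}S_\xi b S_\xi^*$) to see that both $b\circ\sigma_A^{|\mu|}\,S_\mu S_\nu^*$ and $S_\mu S_\nu^*\,b\circ\sigma_A^{|\nu|}$ equal $S_\mu b S_\nu^*$ and cancel.
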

\begin{proof}
By Lemma \ref{lem:smusnufaf}, we know that 
 $S_\mu S_\nu^* \in \FAf$ if and only if 
$f^{|\mu|} S_\mu S_\nu^* = S_\mu S_\nu^*  f^{|\nu|}$.
Put $m = |\mu|, n=|\nu|$.
Now $ f= 1 -b + b\circ\sigma_A$ so that 
$f^m = m -b + b\circ \sigma_A^m$.  
Hence the equality
$f^{|\mu|} S_\mu S_\nu^* = S_\mu S_\nu^*  f^{|\nu|}$
is equivalent to the equality
\begin{equation*}
(m - b +b\circ\sigma_A^m) \, S_\mu S_\nu^*
=S_\mu S_\nu^* \, (n - b +b\circ\sigma_A^n). 
\end{equation*}
Under the identification between $C(X_A)$ and $\DA$, 
we see that 
$b\circ \sigma_A^m = \sum_{\xi\in B_m(X_A)}S_\xi b S_\xi^*$,
so that 
$$
 b\circ \sigma_A^m \, S_\mu S_\nu^*
= 
\sum_{\xi\in B_m(X_A)}S_\xi b S_\xi^* S_\mu S_\nu^*
= S_\mu  b S_\mu^* S_\mu S_\nu^*
= S_\mu b S_\nu^*
$$
and similarly
$
S_\mu S_\nu^*  \, b\circ\sigma_A^n 
=S_\mu b S_\nu^*.
$
Hence we have 
$
 b\circ \sigma_A^m \, S_\mu S_\nu^*
 =S_\mu S_\nu^*  \, b\circ\sigma_A^n.
$
Therefore
$f^{|\mu|} S_\mu S_\nu^* = S_\mu S_\nu^*  f^{|\nu|}$
is equivalent to the equality
\begin{equation}
(m-b) S_\mu S_\nu^* = S_\mu S_\nu^*(n-b). \label{eq:mbsmunu}
\end{equation}
\end{proof}
We note that  if in particular $m=n$,
 the equality \eqref{eq:mbsmunu} goes to
\begin{equation*}
b \, S_\mu S_\nu^* = S_\mu S_\nu^* \, b, 
\end{equation*}
that means 
$S_\mu S_\nu^* \in \{ b\}^\prime \cap \FA$
for $S_\mu S_\nu^* \in \F_{A,1_b}$ with $|\mu| = |\nu|$. 

\begin{proposition}\label{prop:FAfDA}
$$\bigcap_{f \in C(X_A,\Z)} \FAf =\bigcap_{b \in C(X_A,\Z)} \F_{A,1_b} = \DA.$$
\end{proposition}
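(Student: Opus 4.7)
The plan is to establish the chain of inclusions
\[
\DA \subset \bigcap_{f\in C(X_A,\Z)} \FAf \subset \bigcap_{b\in C(X_A,\Z)} \F_{A,1_b} \subset \DA.
\]
The leftmost inclusion was already recorded at the start of this subsection (namely $\DA\subset\FAf$ for every $f$), and the middle one is automatic because $b\mapsto 1_b$ maps $C(X_A,\Z)$ into itself. So the entire content of the proposition is to show that any element $X$ lying in $\F_{A,1_b}$ for every $b\in C(X_A,\Z)$ must actually lie in $\DA$.

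Taking $b\equiv 0$ gives $1_b\equiv 1$ and hence $\F_{A,1_0} = \F_A$, which already places $X$ in the standard AF-subalgebra. The main technical step is then to show that for $X\in\F_A$, membership in $\F_{A,1_b}$ is equivalent to commutation with $b$. My preferred route is to recognize $\rho^{A,1_b}$ as an inner perturbation of the standard gauge action. Using $(1_b)^{|\mu|} = |\mu| - b + b\circ\sigma_A^{|\mu|}$ together with the consequence $b\circ\sigma_A^{|\mu|}\,S_\mu = S_\mu\,b$ of the decomposition $b\circ\sigma_A^m = \sum_{\xi\in B_m(X_A)} S_\xi b S_\xi^*$ that appears in the proof of Lemma~\ref{lem:smusnufab}, a direct computation on generators gives
\[
\rho^{A,1_b}_t(S_\mu) = \exp(2\pi\sqrt{-1}\,t|\mu|)\,\exp(-2\pi\sqrt{-1}\,tb)\,S_\mu\,\exp(2\pi\sqrt{-1}\,tb),
\]
so that $\rho^{A,1_b}_t = \rho^A_t\circ\Ad(\exp(-2\pi\sqrt{-1}\,tb))$ on all of $\OA$. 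Since $\exp(-2\pi\sqrt{-1}\,tb)\in\DA\subset\F_A$, the inner automorphism $\Ad(\exp(-2\pi\sqrt{-1}\,tb))$ preserves $\F_A$ and coincides there with $\rho^{A,1_b}_t$. Hence for $X\in\F_A$ the fixed-point condition $\rho^{A,1_b}_t(X)=X$ for all $t\in\T$ amounts to $X$ commuting with the unitary one-parameter group $\exp(-2\pi\sqrt{-1}\,tb)$, and differentiation at $t=0$ reduces this to $Xb=bX$.

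Running this over every $b\in C(X_A,\Z)$ forces $X$ to commute with every integer-valued continuous function on $X_A$. Because the cylinder characteristics $\chi_{U_\mu}$ all lie in $C(X_A,\Z)$ and their complex span is norm-dense in $C(X_A)\cong\DA$, we obtain $X\in\DA'\cap\OA$; the classical maximality of $\DA$ as an abelian subalgebra of $\OA$ (Cuntz--Krieger) then gives $X\in\DA$, closing the chain. I expect the main obstacle to be the passage from the single-matrix-unit form of Lemma~\ref{lem:smusnufab} to the commutation statement for a general $X\in\F_A$; the inner-perturbation identity above is specifically designed to bypass any approximation of $X$ by finite sums of matrix units, leaving only the MASA property of $\DA$ as external input.
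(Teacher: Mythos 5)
Your proof is correct, and its overall skeleton matches the paper's: reduce to $X\in\F_A$ via $b\equiv 0$, identify $\F_{A,1_b}\cap\F_A$ with the relative commutant $\{b\}'\cap\F_A$, then specialize to $b=\chi_{U_\mu}$ and invoke maximal abelianness of $\DA$. The difference lies in how the key identity $\F_{A,1_b}\cap\F_A=\{X\in\F_A\mid bX=Xb\}$ is obtained. The paper deduces it from Lemma~\ref{lem:smusnufab}, which characterizes membership of the individual partial isometries $S_\mu S_\nu^*$ in $\F_{A,1_b}$; strictly speaking this only gives the statement on generators, and passing to a general $X\in\F_A$ requires the density/expectation machinery of Proposition~\ref{prop:generated}, which the paper leaves implicit. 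You instead prove the cocycle-perturbation identity $\rho^{A,1_b}_t=\rho^A_t\circ\Ad\bigl(\exp(-2\pi\sqrt{-1}\,tb)\bigr)$ with $\exp(-2\pi\sqrt{-1}\,tb)\in\DA$, so that on the $\rho^A$-fixed algebra $\F_A$ the action $\rho^{A,1_b}$ is literally conjugation by a norm-differentiable unitary one-parameter group, and the fixed-point condition becomes $Xb=bX$ for \emph{arbitrary} $X\in\F_A$ with no approximation needed. This is the same identity the paper itself uses later (in the proof of Theorem~\ref{thm:coboundary}, citing \cite[Lemma 2.3]{MaMZ}), so your route is consonant with the paper's toolkit while being self-contained and closing the small gap in the generator-to-general-element passage. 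The only cosmetic discrepancy is that you conclude via $\DA'\cap\OA=\DA$ where the paper uses $\DA'\cap\F_A=\DA$; since $X\in\F_A\subset\OA$, either form of the MASA property suffices.
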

\begin{proof}
As $\FAf \supset \DA$ for all $f \in C(X_A,\Z)$,
we will show the inclusion $\bigcap_{b \in C(X_A,\Z)} \F_{A,1_b} \subset \DA.$
By the preceding lemma, we see that 
\begin{equation*}
\F_{A,1_b} \cap \FA = \{ X \in \FA \mid b X =Xb\}. 
\end{equation*}
For $b\equiv 0$, we have 
$\F_{A,1_b} = \FA$.
For $\mu \in B_*(X_A)$, let $b =\chi_{U_\mu}$ that is identified with
$S_\mu S_\mu^*$. 
Hence we have
$\{ b\}^\prime \cap \FA = \{ S_\mu S_\mu^*\}^\prime \cap \FA,
$ 
Since the projections $S_\mu S_\mu^*, \mu \in B_*(X_A)$
generate $\DA$, we know that 
$$
\bigcap_{\mu \in B_*(X_A)} \{ S_\mu S_\mu^*\}^\prime \cap \FA
=\DA^\prime \cap \FA = \DA,
$$
because $\DA$ is maximal abelian in $\FA$.
We conclude that 
$\bigcap_{b \in C(X_A,\Z)} \F_{A,1_b} = \DA$
and hence 
$\bigcap_{f \in C(X_A,\Z)} \FAf = \DA.$
\end{proof}

\begin{lemma}
For $\mu, \nu \in B_*(X_A)$, the partial isometry 
$S_\mu S_\nu^*$ belongs to $\FAf$
 if and only if 
$S_{\mu i} S_{\nu i}^*$ belongs to $\FAf$
for all $i \in \{1,2,\dots,N\}$ satisfying 
$\mu i, \, \nu i \in B_*(X_A)$.
\end{lemma}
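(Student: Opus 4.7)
The plan is to reduce the lemma to two ingredients that are completely independent of the potential $f$: the inclusion $\DA \subset \FAf$ and the Cuntz--Krieger relation $\sum_{i=1}^N S_i S_i^* = 1$. In particular, Lemma~\ref{lem:rhoafsmu} and Lemma~\ref{lem:smusnufaf} need not be invoked at all; the argument will work at the level of the $C^*$-subalgebra structure of $\FAf$.

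The key algebraic identity I would record first is
\begin{equation*}
S_\mu S_\nu^* \cdot S_{\nu i} S_{\nu i}^* \;=\; S_{\mu i} S_{\nu i}^*,
\qquad i \in \{1,\dots,N\},
\end{equation*}
valid for every $i$. This follows from $S_{\nu i} S_{\nu i}^* = S_\nu S_i S_i^* S_\nu^*$ together with $S_\nu^* S_\nu = \sum_{j=1}^N A(\nu_n, j) S_j S_j^*$ (where $\nu_n$ is the last letter of $\nu$) and the orthogonality $S_j^* S_i = \delta_{ij}(S_i^* S_i)$, which gives $(S_\nu^* S_\nu) S_i = A(\nu_n, i) S_i$. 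When $\nu i$ or $\mu i$ is inadmissible both sides vanish, so the identity is consistent with the convention $S_\xi = 0$ for $\xi \notin B_*(X_A)$.

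For the implication $(\Rightarrow)$, assuming $S_\mu S_\nu^* \in \FAf$, I would note that $S_{\nu i} S_{\nu i}^* \in \DA \subset \FAf$, so the product $S_\mu S_\nu^* \cdot S_{\nu i} S_{\nu i}^* = S_{\mu i} S_{\nu i}^*$ lies in $\FAf$ for each $i$, in particular for those $i$ with $\mu i, \nu i \in B_*(X_A)$. For the implication $(\Leftarrow)$, I would use $\sum_{i=1}^N S_i S_i^* = 1$ to decompose
\begin{equation*}
S_\mu S_\nu^* \;=\; S_\mu \cdot 1 \cdot S_\nu^* \;=\; \sum_{i=1}^N S_{\mu i} S_{\nu i}^*,
\end{equation*}
where only the admissible indices contribute nonzero summands; since each such summand is assumed to lie in $\FAf$ and $\FAf$ is closed under finite sums, the conclusion follows.

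I do not anticipate any serious obstacle: the whole argument is routine manipulation of the Cuntz--Krieger relations, and the only point requiring a touch of care is bookkeeping of vanishing terms coming from inadmissible one-letter extensions $\mu i$ or $\nu i$, which is automatic with the standing convention.
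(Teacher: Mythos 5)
Your proof is correct, and your ``if'' direction (decomposing $S_\mu S_\nu^* = \sum_{i=1}^N S_{\mu i}S_{\nu i}^*$ via $\sum_{i=1}^N S_iS_i^*=1$) is exactly the paper's. Where you genuinely diverge is the ``only if'' direction. The paper argues through the commutation criterion of Lemma \ref{lem:smusnufaf}: from $f^{|\mu|}S_\mu S_\nu^* = S_\mu S_\nu^* f^{|\nu|}$ it first deduces $f^{|\mu|}S_{\mu i}S_{\nu i}^* = S_{\mu i}S_{\nu i}^* f^{|\nu|}$ by multiplying with $S_{\nu i}S_{\nu i}^*$, then separately verifies $f\circ\sigma_A^{|\mu|}\cdot S_{\mu i}S_{\nu i}^* = S_{\mu i}S_{\nu i}^*\cdot f\circ\sigma_A^{|\nu|}$, and adds the two identities using $f^{m+1}=f^m+f\circ\sigma_A^m$ to reach the criterion for the longer words $\mu i,\nu i$. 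You instead note that $S_{\mu i}S_{\nu i}^* = S_\mu S_\nu^*\cdot S_{\nu i}S_{\nu i}^*$ is the product of an element of $\FAf$ with an element of $\DA\subset\FAf$, and that $\FAf$, being a fixed-point algebra of an action by $*$-automorphisms, is closed under multiplication. This is shorter, bypasses Lemma \ref{lem:smusnufaf} and all computation with $f$, and exposes the lemma as a purely structural consequence of the inclusion $\DA\subset\FAf$ (which the paper records at the start of the subsection); the paper's route has the mild advantage of staying in the currency of explicit commutation relations with $f^{|\cdot|}$ that it reuses elsewhere, e.g.\ for Proposition \ref{prop:generated}. Your handling of inadmissible one-letter extensions (both sides of the key identity vanish) is also correct.
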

\begin{proof}
Since the identity
$S_\mu S_\nu^* = \sum_{i=1}^N S_{\mu i} S_{\nu i}^*$
holds,
it suffices to prove the only if part of the assertion.
For $\mu, \nu \in B_*(X_A)$, put
$m=|\mu|, n =|\nu|$.
Suppose that the partial isometry 
$S_\mu S_\nu^*$ belongs to $\FAf$.
By Lemma \ref{lem:smusnufaf}, 
the equality
$f^{m} S_\mu S_\nu^* = S_\mu S_\nu^*  f^n$
holds.
Since 
$S_\mu S_\nu^* = \sum_{i=1}^N S_{\mu i} S_{\nu i}^*$,
we know that 
\begin{equation*}
 S_\mu S_\nu^*\cdot S_{\nu i} S_{\nu i}^* 
=S_{\mu i} S_{\nu i}^*S_{\nu i} S_{\nu i}^*
=S_{\mu i} S_{\nu i}^*. 
\end{equation*} 
As $f^n$ commutes with $S_{\nu i} S_{\nu i}^*$,
we see the equality
\begin{equation}
f^{m}  S_{\mu i} S_{\nu i}^* 
=f^{m} S_\mu S_\nu^*\cdot S_{\nu i} S_{\nu i}^* 
= S_\mu S_\nu^*  f^n \cdot S_{\nu i} S_{\nu i}^*
= S_{\mu i} S_{\nu i}^* f^n. \label{eq:smuinui4}
\end{equation}
We also have
\begin{equation*}
f\circ \sigma_A^m \cdot  S_{\mu i} S_{\nu i}^*
= \sum_{\xi \in B_m(X_A)} S_\xi f S_\xi^* S_{\mu i} S_{\nu i}^* 
= S_\mu f S_\mu^* S_{\mu} S_i S_i^* S_{\nu}^* 
= S_\mu f  S_i S_i^* S_{\nu}^* 
\end{equation*}
and
\begin{equation*}
 S_{\mu i} S_{\nu i}^*\cdot f\circ \sigma_A^n 
= S_{\mu i} S_{\nu i}^* \sum_{\eta \in B_n(X_A)} S_\eta f S_\eta^* 
= S_{\mu} S_i S_i^* S_{\nu}^* S_\nu f S_\nu^*  
= S_\mu f  S_i S_i^* S_{\nu}^* 
\end{equation*}
so that we have
\begin{equation}
f\circ \sigma_A^m \cdot  S_{\mu i} S_{\nu i}^*
=S_{\mu i} S_{\nu i}^*\cdot f\circ \sigma_A^n. \label{eq:smuinui6}
\end{equation}
As $f^{m+1} = f^m + f\circ \sigma_A^m$
and 
$f^{n+1} = f^n + f \circ \sigma_A^n$,
we have the equality
\begin{equation*}
f^{m+1} \cdot  S_{\mu i} S_{\nu i}^*
=S_{\mu i} S_{\nu i}^*\cdot f^{n+1}
\end{equation*}
by \eqref{eq:smuinui4} and \eqref{eq:smuinui6}.
Hence
 $S_{\mu i} S_{\nu i}^*$ belongs to $\FAf$.
\end{proof}
Define an expectation
$E_{A,f}: \OA \longrightarrow \FAf$ by 
\begin{equation*}
E_{A,f}(X ) = \int_{\T} \rho^{A,f}_t(X) \, dt \qquad \text{ for } X \in \OA,
\end{equation*}
where $dt$ stands for the normalized Lebesgue measure on $\T$.
Let $\PA$ be the $*$-algebra algebraically generated by $S_1, \dots, S_N$.
For $S_\mu S_\nu^* \in \PA$, we have
\begin{align*}
E_{A,f}(S_\mu S_\nu^*)
& = \int_{\T} \exp{(2\pi\sqrt{-1} f^{|\mu|} t)} S_\mu S_\nu^* \exp{(-2\pi\sqrt{-1} f^{|\nu|} t)}\, dt \\ 
& = \int_{\T} \exp{(2\pi\sqrt{-1} 
( f^{|\mu|} - S_\mu S_\nu^* f^{|\nu|} S_\nu S_\mu^*)  t)} \, dt \,
S_\mu S_\nu^*. 
\end{align*}
Put
$f_{\mu,\nu} = f^{|\mu|} - S_\mu S_\nu^* f^{|\nu|} S_\nu S_\mu^* \in \DA$
and
$$
P_{\mu,\nu} = \int_{\T} \exp{(2\pi\sqrt{-1}  f_{\mu,\nu} t)} \, dt \in \DA,
$$
that is a projection in $\DA$.
We thus obtain the following lemma. 
\begin{lemma} \label{lem:2.6}
$E_{A,f}(S_\mu S_\nu^*) = P_{\mu,\nu}S_\mu S_\nu^*$. 
\end{lemma}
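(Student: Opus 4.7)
The computation displayed just before the statement already contains almost all the work; what remains is to justify the manipulation that pulls $S_\mu S_\nu^*$ to the right of the exponential. The plan is therefore to start from the formula
$$E_{A,f}(S_\mu S_\nu^*) = \int_\T \exp(2\pi\sqrt{-1}f^{|\mu|}t)\, S_\mu S_\nu^* \, \exp(-2\pi\sqrt{-1}f^{|\nu|}t)\, dt,$$
which follows from Lemma~\ref{lem:rhoafsmu} applied to both $S_\mu$ and $S_\nu^*$, and then to show the key intertwining identity
$$S_\mu S_\nu^*\, \exp(-2\pi\sqrt{-1}f^{|\nu|}t) = \exp\bigl(-2\pi\sqrt{-1}\, S_\mu S_\nu^* f^{|\nu|} S_\nu S_\mu^*\, t\bigr)\, S_\mu S_\nu^*.$$

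To prove this intertwining identity I would expand the exponential on the left as a power series and reduce the problem to checking that
$$T g^k = (TgT^*)^k\, T \qquad (k\in\Zp)$$
for $T := S_\mu S_\nu^*$ and $g := f^{|\nu|}\in\DA$. Because $T$ is a partial isometry we have $T T^* T = T$, and the projection $T^*T = S_\nu S_\mu^* S_\mu S_\nu^*$ lies in $\DA$, hence commutes with $g\in\DA$. Using these two facts, an easy induction gives $(TgT^*)^k = T g^k T^*$, and multiplying on the right by $T$ and commuting $T^*T$ past $g^k$ yields $(TgT^*)^k T = T g^k T^* T = T g^k$, as desired. Summing the power series gives the displayed intertwining identity.

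Once this is in hand, I substitute into the integral formula for $E_{A,f}(S_\mu S_\nu^*)$. The two exponentials that then appear on the left of $S_\mu S_\nu^*$ have exponents $f^{|\mu|}$ and $S_\mu S_\nu^* f^{|\nu|} S_\nu S_\mu^*$, both of which are self-adjoint elements of the commutative algebra $\DA$ and therefore commute. Combining them into a single exponential with exponent $2\pi\sqrt{-1}f_{\mu,\nu}t$ and pulling $S_\mu S_\nu^*$ (which is constant in $t$) outside the integral gives
$$E_{A,f}(S_\mu S_\nu^*) = \left(\int_\T \exp(2\pi\sqrt{-1}f_{\mu,\nu}t)\, dt\right) S_\mu S_\nu^* = P_{\mu,\nu}\, S_\mu S_\nu^*,$$
which is the claimed equality.

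The only step that requires any actual argument is the intertwining identity, and even there the whole point is the interplay between the partial-isometry relation $TT^*T=T$ and the commutativity of $\DA$; there is no serious obstacle, merely a small amount of bookkeeping with the power series.
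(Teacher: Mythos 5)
Your proof is correct and follows essentially the same route as the paper: the paper's argument is precisely the displayed computation preceding the lemma (the same manipulation appears in the proof of Lemma~\ref{lem:smusnufaf}, where $S_\nu S_\mu^* S_\mu S_\nu^*$ is inserted on the right), and your power-series verification of the intertwining identity $S_\mu S_\nu^*\exp(-2\pi\sqrt{-1}f^{|\nu|}t)=\exp(-2\pi\sqrt{-1}S_\mu S_\nu^* f^{|\nu|}S_\nu S_\mu^* t)S_\mu S_\nu^*$ just makes explicit the step the paper leaves implicit. No gaps.
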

Hence we have 
\begin{lemma}
$\PA\cap \FAf$ is dense in $\FAf$.
\end{lemma}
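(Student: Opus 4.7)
The plan is to leverage the expectation $E_{A,f}:\OA\to\FAf$ from Lemma \ref{lem:2.6} together with the density of $\PA$ in $\OA$. The crucial step is to verify that $E_{A,f}$ maps $\PA$ into itself (hence into $\PA\cap\FAf$); once this is done, a standard contractivity argument finishes the proof.

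To establish $E_{A,f}(\PA)\subset\PA$, I reduce by linearity to the monomials $S_\mu S_\nu^*$. By Lemma \ref{lem:2.6} the image is $P_{\mu,\nu}S_\mu S_\nu^*$, so it suffices to show $P_{\mu,\nu}\in\PA$. Here I would exploit the fact that $f\in C(X_A,\Z)$ is integer-valued: under the identification $\DA\cong C(X_A)$, the element $f_{\mu,\nu}=f^{|\mu|}-S_\mu S_\nu^* f^{|\nu|}S_\nu S_\mu^*$ is a self-adjoint element of $\DA$ that takes values in $\Z$. A direct pointwise computation of
\begin{equation*}
P_{\mu,\nu}(x)=\int_{\T}\exp(2\pi\sqrt{-1}\,f_{\mu,\nu}(x)\,t)\,dt
\end{equation*}
then yields $P_{\mu,\nu}=\chi_{\{x\in X_A\,:\,f_{\mu,\nu}(x)=0\}}$, since $\int_0^1 e^{2\pi\sqrt{-1}nt}\,dt=\delta_{n,0}$ for $n\in\Z$. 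Because $f_{\mu,\nu}$ is a continuous integer-valued function on the compact totally disconnected space $X_A$, it is locally constant with finite range, so the set $\{f_{\mu,\nu}=0\}$ is clopen and is therefore a finite disjoint union of cylinder sets $U_{\eta_1},\dots,U_{\eta_r}$. Via the identification $\chi_{U_\eta}\leftrightarrow S_\eta S_\eta^*$, this rewrites $P_{\mu,\nu}=\sum_{i=1}^r S_{\eta_i}S_{\eta_i}^*\in\PA$, so $P_{\mu,\nu}S_\mu S_\nu^*\in\PA$ as desired.

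Now, given $X\in\FAf$ and $\varepsilon>0$, I choose $Z\in\PA$ with $\|X-Z\|<\varepsilon$, which is possible since $\PA$ is dense in $\OA$. Setting $Y:=E_{A,f}(Z)$, the previous step gives $Y\in\PA\cap\FAf$, and because $E_{A,f}$ is a contractive linear map with $E_{A,f}(X)=X$,
\begin{equation*}
\|X-Y\|=\|E_{A,f}(X)-E_{A,f}(Z)\|\le\|X-Z\|<\varepsilon,
\end{equation*}
which proves the density of $\PA\cap\FAf$ in $\FAf$.

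The main obstacle is the first step—showing that spectral projections of elements of $\DA$ arising from the integral formula stay inside $\PA$. This turns entirely on the \emph{integrality} of $f$, which forces the relevant exponentials to be periodic with period dividing $1$, so that the averaging collapses to a characteristic function of a clopen set. Without this discreteness one would only obtain $P_{\mu,\nu}\in\DA$, which would merely give density of $\DA\cdot\PA\cap\FAf$, a weaker statement; so I expect that the proof in the paper also proceeds by first invoking that $f\in C(X_A,\Z)$ rather than $C(X_A,\R)$.
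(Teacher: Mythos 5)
Your argument is correct and follows essentially the same route as the paper: apply the conditional expectation $E_{A,f}$ to a dense subset of $\OA$, observe via Lemma \ref{lem:2.6} that $E_{A,f}(S_\mu S_\nu^*)=P_{\mu,\nu}S_\mu S_\nu^*$ lands in $\PA\cap\FAf$, and conclude by contractivity of $E_{A,f}$ together with $E_{A,f}(X)=X$ for $X\in\FAf$. The only difference is that the paper simply asserts that the projection $P_{\mu,\nu}\in\DA$ lies in $\PA$, whereas you supply the (correct) justification that integrality of $f$ forces $P_{\mu,\nu}$ to be the characteristic function of a clopen set, hence a finite sum of projections $S_\eta S_\eta^*$.
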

\begin{proof}
As Lemma \ref{lem:2.6},
the equality
$E_{A,f}(S_\mu S_\nu^*) = P_{\mu,\nu}S_\mu S_\nu^*$
holds.
Since 
$P_{\mu,\nu} $ is a projection in $\DA,$
it belongs to $\PA\cap \FAf$,
so that 
$E_{A,f}(S_\mu S_\nu^*) \in \PA\cap \FAf$.
This means 
$E_{A,f}(\PA) \subset  \PA\cap \FAf$
and hence 
$E_{A,f}(\PA) = \PA\cap \FAf.$
To show that $\PA\cap \FAf$ is dense in $\FAf$,
it suffices to prove that 
$E_{A,f}(\PA)$ is dense in $\FAf$.
Take an arbitrary $X \in \FAf$.
Since $\PA$ is dense in $\OA$,
there exists a sequence $X_n\in \PA$ such that 
$\lim_{n\to{\infty}}\| X_n - X \| =0.$
As we have 
\begin{equation*}
\| E_{A,f}(X_n) - X\| =
\| E_{A,f}(X_n - X) \| \le
\| X_n - X \| 
\end{equation*}
and
$E_{A,f}(X_n)$ belongs to $\PA \cap \FAf$,
we know that 
$\PA\cap \FAf$ is dense in $\FAf$.
\end{proof}

Therefore we have the following proposition.

\begin{proposition}\label{prop:generated}
The $C^*$-algebra $\FAf$ is generated by the partial isometries of the form
\begin{equation*}
S_\mu S_\nu^* \quad \text{ satisfying } \quad
f^{|\mu|}S_\mu S_\nu^* = S_\mu S_\nu^*f^{|\nu|},\quad 
\mu, \nu \in B_*(X_A). 
\end{equation*}
\end{proposition}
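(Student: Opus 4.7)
The plan is to combine the density result just established with a cylinder-set decomposition argument. Let $\mathcal{B}$ denote the closed linear span of those partial isometries $S_\mu S_\nu^*$ satisfying $f^{|\mu|} S_\mu S_\nu^* = S_\mu S_\nu^* f^{|\nu|}$. By Lemma \ref{lem:smusnufaf} each such generator lies in $\FAf$, so $\mathcal{B} \subseteq \FAf$, and the content of the proposition is the reverse inclusion. The preceding lemma states that $\PA \cap \FAf$ is dense in $\FAf$. Since $E_{A,f}$ is linear and $E_{A,f}(S_\mu S_\nu^*) = P_{\mu,\nu} S_\mu S_\nu^*$ by Lemma \ref{lem:2.6}, every element of $\PA \cap \FAf$ is a finite linear combination of elements of the form $P_{\mu,\nu} S_\mu S_\nu^*$. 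It therefore suffices to show each such term belongs to $\mathcal{B}$.

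Fix $\mu, \nu \in B_*(X_A)$ and set $m = |\mu|$, $n = |\nu|$. The function $f_{\mu,\nu} \in \DA$ is continuous and integer-valued on the Cantor space $X_A$, so its zero set $Z_{\mu,\nu}$ is clopen. I would choose an integer $k \geq \max(m,n)$ and a finite collection $F$ of admissible words of length $k$ with $Z_{\mu,\nu} = \bigsqcup_{\alpha \in F} U_\alpha$, giving $P_{\mu,\nu} = \sum_{\alpha \in F} S_\alpha S_\alpha^*$. A short Cuntz--Krieger computation---using $S_\alpha^* S_\mu = S_\beta^*$ when $\alpha = \mu\beta$ is admissible, and zero when $\alpha$ does not extend $\mu$---yields
\begin{equation*}
P_{\mu,\nu} S_\mu S_\nu^* \;=\; \sum_{\beta:\;\mu\beta \in F} S_{\mu\beta} S_{\nu\beta}^*,
\end{equation*}
where terms with $\nu\beta$ inadmissible vanish automatically.

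It remains to show each summand $S_{\mu\beta} S_{\nu\beta}^*$ lies in $\mathcal{B}$. Here an orthogonality observation removes what would otherwise be a tedious commutation-relation check: because distinct cylinders of the same length are disjoint, multiplying on the left by $S_{\mu\beta} S_{\mu\beta}^* \in \DA$ isolates a single summand,
\begin{equation*}
S_{\mu\beta} S_{\nu\beta}^* \;=\; S_{\mu\beta} S_{\mu\beta}^* \cdot P_{\mu,\nu} S_\mu S_\nu^*.
\end{equation*}
The right-hand side is a product of $S_{\mu\beta} S_{\mu\beta}^* \in \DA \subseteq \FAf$ with $P_{\mu,\nu} S_\mu S_\nu^* = E_{A,f}(S_\mu S_\nu^*) \in \FAf$, hence itself lies in $\FAf$. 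Lemma \ref{lem:smusnufaf} then forces the commutation relation $f^{m + |\beta|} S_{\mu\beta} S_{\nu\beta}^* = S_{\mu\beta} S_{\nu\beta}^* f^{n + |\beta|}$, so $S_{\mu\beta} S_{\nu\beta}^*$ is itself one of the declared generators of $\mathcal{B}$. Summing, $P_{\mu,\nu} S_\mu S_\nu^* \in \mathcal{B}$, and density completes the argument. The main obstacle in designing this proof is spotting the orthogonality trick that converts the problem from a direct algebraic verification of the commutation identity for extended words into a simple ``product of elements of $\FAf$'' observation; the remaining steps are mechanical applications of the Cuntz--Krieger relations together with the clopen cylinder decomposition.
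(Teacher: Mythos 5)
Your argument is correct and follows the same route the paper intends: it rests on the density of $\PA\cap\FAf$ in $\FAf$, the identity $E_{A,f}(S_\mu S_\nu^*)=P_{\mu,\nu}S_\mu S_\nu^*$, and the fact that $P_{\mu,\nu}$ is a projection in $\DA$, which is exactly the chain of lemmas from which the paper deduces the proposition without further comment. Your cylinder decomposition of the clopen zero set of $f_{\mu,\nu}$ and the observation that $S_{\mu\beta}S_{\nu\beta}^*=S_{\mu\beta}S_{\mu\beta}^*\cdot E_{A,f}(S_\mu S_\nu^*)\in\FAf$ correctly supply the one detail the paper leaves implicit, namely that $P_{\mu,\nu}S_\mu S_\nu^*$ is a sum of generators of the stated form.
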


\subsection{Basic properties of cocycle groupoids}
In this subsection,
 we will study a family $G_{A,f}, f \in C(X_A,\Z)$  of 
\'etale subgroupoids of $G_A$.
They are called the {\it cocycle groupoid for}\/ $f$.
For $f\in C(X_A,\Z)$,
define the \'etale subgroupoid $G_{A,f}$ by setting
\begin{align*}
G_{A,f} := \{ &
(x, n, z) \in X_A\times \Z \times X_A \mid  \text{there exist } k,l \in \Zp \text{ such that }\\
& n= k-l, \, \sigma_A^k(x) = \sigma_A^l(z), \,   
f^k(x) = f^l(z) \}.
\end{align*}
Put the unit space 
$G_{A,f}^{(0)} = \{(x, 0, x) \in G_{A,f}\mid  x \in X_A \}.$
The product and the inverse operation on $G_{A,f}$ are
inherited from $G_A$.
\begin{lemma}
\begin{enumerate}
\renewcommand{\theenumi}{\roman{enumi}}
\renewcommand{\labelenumi}{\textup{(\theenumi)}}
\item 
If $(x,n,z), (z, m,w) \in G_{A,f}$, then 
$(x, n+m, w) \in G_{A,f}$.
\item 
If $(x,n,z) \in G_{A,f}$, then 
$(z, -n, x) \in G_{A,f}$.
\end{enumerate}
\end{lemma}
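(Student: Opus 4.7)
The plan is to verify both statements by choosing the obvious witnesses and then checking the three defining conditions of $G_{A,f}$ (the arithmetic identity for $n$, the shift-equality, and the $f$-equality), with the last one being the only nontrivial check.

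For part (i), suppose $(x,n,z), (z,m,w) \in G_{A,f}$ with witnesses $k_1, l_1 \in \Zp$ for the first pair (so $n = k_1 - l_1$, $\sigma_A^{k_1}(x) = \sigma_A^{l_1}(z)$, and $f^{k_1}(x) = f^{l_1}(z)$) and $k_2, l_2 \in \Zp$ for the second pair. I would take $k := k_1 + k_2$ and $l := l_1 + l_2$ as the candidate witnesses for $(x,n+m,w)$. The identity $n+m = k-l$ is immediate. The shift equality follows by applying $\sigma_A^{k_2}$ to $\sigma_A^{k_1}(x) = \sigma_A^{l_1}(z)$ and then using $\sigma_A^{k_2}(z) = \sigma_A^{l_2}(w)$, together with the fact that the $\sigma_A^i$ commute among themselves, yielding $\sigma_A^{k_1+k_2}(x) = \sigma_A^{l_1+l_2}(w)$.

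The only delicate step is $f^{k_1+k_2}(x) = f^{l_1+l_2}(w)$. Here I would use the cocycle identity $f^{a+b}(y) = f^a(y) + f^b(\sigma_A^a(y))$ stated in the introduction, applied repeatedly. First expand
\begin{equation*}
f^{k_1+k_2}(x) = f^{k_1}(x) + f^{k_2}(\sigma_A^{k_1}(x)) = f^{l_1}(z) + f^{k_2}(\sigma_A^{l_1}(z)),
\end{equation*}
using the first set of hypotheses. Next rewrite $f^{l_1}(z) + f^{k_2}(\sigma_A^{l_1}(z)) = f^{l_1+k_2}(z)$ and re-expand in the other order as $f^{k_2}(z) + f^{l_1}(\sigma_A^{k_2}(z))$, and then apply the second set of hypotheses to replace $f^{k_2}(z)$ by $f^{l_2}(w)$ and $\sigma_A^{k_2}(z)$ by $\sigma_A^{l_2}(w)$, obtaining $f^{l_2}(w) + f^{l_1}(\sigma_A^{l_2}(w)) = f^{l_2+l_1}(w)$. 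Combining the two computations yields the required equality.

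For part (ii), if $(x,n,z) \in G_{A,f}$ has witnesses $k, l \in \Zp$, then the pair $(l,k)$ witnesses $(z,-n,x) \in G_{A,f}$: the arithmetic identity $-n = l - k$, the shift equality $\sigma_A^l(z) = \sigma_A^k(x)$, and the $f$-equality $f^l(z) = f^k(x)$ all follow by simply interchanging the roles of $(x,k)$ and $(z,l)$. The main obstacle is the bookkeeping in part (i)'s cocycle manipulation; once that is carried out correctly, both assertions reduce to applications of the cocycle identity and symmetry.
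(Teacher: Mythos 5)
Your proposal is correct and follows essentially the same route as the paper: both take the summed witnesses $k_1+k_2$ and $l_1+l_2$, verify the shift equality by commuting powers of $\sigma_A$, and establish the $f$-equality by expanding $f^{k_1+k_2}(x)$ via the cocycle identity, passing through $f^{l_1+k_2}(z)$, and re-expanding in the other order. Part (ii) is handled identically (the paper simply calls it obvious).
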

\begin{proof}
For $(x,n,z), (z, m,w) \in G_{A,f}$,
take $k,l, p,q \in \Zp$ such that 
\begin{gather*}
n= k-l, \qquad \sigma_A^k(x) = \sigma_A^l(z), \qquad   
f^k(x) = f^l(z), \\
m= p-q, \qquad \sigma_A^p(z) = \sigma_A^q(w), \qquad   
f^p(z) = f^q(w).
\end{gather*}
We then have 
\begin{equation*}
\sigma_A^{p+k}(x) 
= \sigma_A^p(\sigma_A^k(x))
= \sigma_A^p(\sigma_A^l(z))
= \sigma_A^l(\sigma_A^p(z))
= \sigma_A^l(\sigma_A^q(w))
= \sigma_A^{l+q}(w).
\end{equation*}
Since $\sigma_A^k(x) = \sigma_A^l(z)$
and
$\sigma_A^p(z) = \sigma_A^q(w)$, we have 
\begin{align*}
f^{p+k}(x) 
=&  f^k(x)  + f^p(\sigma_A^k(x)) 
= f^l(z)  + f^p(\sigma_A^l(z)) 
=  f^{l+p}(z) \\
=&  f^p(z)  + f^l(\sigma_A^p(z)) 
=  f^q(w)  + f^l(\sigma_A^q(w)) 
= f^{l+q}(w).
\end{align*}

The assertion (ii) is obvious. 
\end{proof}
Recall that the topology on the groupoid $G_A$ is defined in the following way.
For open subsets $U, V \subset X_A$
and $k,l \in \Zp$ such that both
$\sigma_A^k: U \longrightarrow \sigma_A^k(U)$
and
$\sigma_A^l: V \longrightarrow \sigma_A^l(V)$
are injective and hence homeomorphisms,
an open neighborhood basis of $G_A$ is defined by
\begin{equation*}
\mathcal{ U}(U, k,l,V) 
=  \{ ( y, k-l, w) \in G_A 
         \mid y \in U, w \in V, \sigma_A^k(y ) = \sigma_A^l(w)\}
\end{equation*}
As $G_{A,f} \subset G_A$, 
we endow $G_{A,f}$ with the relative topology
from $G_A$, so that 
$G_{A,f}$ is a topological subgroupoid of $G_A$ such that 
the unit space 
$G_{A,f}^{(0)}$ is homeomorphic to $X_A$ 
under the natural correspondence
$(x,0,x) \in G_{A,f}^{(0)} \longrightarrow x \in X_A$.
We henceforth identify $G_{A,f}^{(0)}$ with $X_A$
through the natural identification.

\begin{lemma}
$G_{A,f}$ is open and closed in $G_A$.
Hence  $G_{A,f}$ is a clopen subgroupoid of $G_A$.
\end{lemma}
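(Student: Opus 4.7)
The plan is to exhibit $G_{A,f}$ as the preimage of the clopen point $\{0\}$ under a continuous $\Z$-valued function on $G_A$, which gives openness and closedness in a single stroke. I would define
\[
\phi : G_A \longrightarrow \Z, \qquad \phi(x,n,z) := f^k(x) - f^l(z),
\]
where $k, l \in \Zp$ are any witnesses for $(x,n,z) \in G_A$, that is, $n = k-l$ and $\sigma_A^k(x) = \sigma_A^l(z)$. Since the condition $f^k(x) = f^l(z)$ appearing in the definition of $G_{A,f}$ is precisely $\phi(x,n,z) = 0$, once $\phi$ is shown to be well-defined and continuous we obtain $G_{A,f} = \phi^{-1}(\{0\})$, and being a subgroupoid has already been verified in the preceding lemma.

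The well-definedness is where I expect the main technical content to sit, since a priori different witnesses could give different values. Given two valid witness pairs $(k_1,l_1)$ and $(k_2,l_2)$ for the same $(x,n,z)$, I would set $K := \max(k_1,k_2)$ and $L := K-n$, so that $K - k_i = L - l_i \geq 0$ for $i = 1,2$. Applying $\sigma_A^{K-k_i}$ to both sides of $\sigma_A^{k_i}(x) = \sigma_A^{l_i}(z)$ yields $\sigma_A^K(x) = \sigma_A^L(z)$. Then the cocycle identity
\[
f^{a+b}(y) = f^a(y) + f^b(\sigma_A^a(y)),
\]
combined with $\sigma_A^{k_i}(x) = \sigma_A^{l_i}(z)$ and $K - k_i = L - l_i$, gives $f^K(x) - f^L(z) = f^{k_i}(x) - f^{l_i}(z)$ for each $i$. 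Comparing the two expressions for $i=1,2$ yields $f^{k_1}(x) - f^{l_1}(z) = f^{k_2}(x) - f^{l_2}(z)$, as required.

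Continuity is then routine. On each basic open neighborhood $\mathcal{U}(U,k,l,V)$ with $k - l = n$, the restriction of $\phi$ is $(y,n,w) \mapsto f^k(y) - f^l(w)$, which is continuous because $f^k, f^l \in C(X_A, \Z)$ are continuous and the range and source component projections are continuous on $G_A$. Since such neighborhoods form an open base for $G_A$ and $\phi$ is consistent on overlaps by well-definedness, $\phi$ is globally continuous. Finally, $\{0\}$ is clopen in the discrete space $\Z$, so $G_{A,f} = \phi^{-1}(\{0\})$ is both open and closed in $G_A$, completing the argument.
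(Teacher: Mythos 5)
Your proof is correct, and it takes a genuinely different route from the paper's. The paper argues openness and closedness separately: openness by producing, around each $(x,n,z)\in G_{A,f}$, a basic neighborhood $\mathcal{U}(U_{(x_1,\dots,x_p)},k,l,U_{(z_1,\dots,z_q)})$ on which $f^k$ and $f^l$ are locally constant and hence the defining conditions persist; closedness by a net argument whose technical core is precisely the witness-independence computation you isolate (the paper enlarges the witnesses $k_\alpha,l_\alpha$ of the approximating elements and uses the cocycle identity $f^{n+k}(x)=f^n(x)+f^k(\sigma_A^n(x))$ to push the equality $f^{k_\alpha}(x_\alpha)=f^{l_\alpha}(z_\alpha)$ down to $f^k(x_\alpha)=f^l(z_\alpha)$). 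You instead package both halves into the single statement that $\phi(x,n,z)=f^k(x)-f^l(z)$ is a well-defined continuous $\Z$-valued function on $G_A$ with $G_{A,f}=\phi^{-1}(\{0\})$; your well-definedness argument (taking $K=\max(k_1,k_2)$, $L=K-n$, and comparing via the cocycle identity) is sound, and continuity on each basic open set does suffice for global continuity. What your approach buys is economy and a conceptual gain: $\phi$ is in fact a continuous groupoid homomorphism (cocycle) from $G_A$ to $\Z$, and $G_{A,f}$ is its kernel, which is the structural reason the subgroupoid is clopen. What the paper's approach buys is that it never needs to define $\phi$ on all of $G_A$ or check witness-independence globally, only along the particular net under consideration; the underlying computation, however, is essentially the same in both treatments.
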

\begin{proof}
For $(x,n,z) \in G_{A,f},$
there exist $k,l \in \Zp$  
such that 
$n = k-l, \, \sigma_A^k(x) = \sigma_A^l(z)$
and $f^k(x) = f^l(z).$
Both $f^k$ and $f^l$ are continuous
so that 
there exist $p, q \in \N$ with $p\ge k, \, q\ge l$
such that 
\begin{equation} \label{eq:fkxp}
f^k(x) = f^k(x') \text{ for all } x' \in U_{(x_1,\dots, x_p)} \quad \text{ and } \quad
f^l(z) = f^l(z') \text{ for all } z' \in U_{(z_1,\dots, z_q)}. 
\end{equation}
Hence we have  
$$
(x, n, z) \in 
\mathcal{U}(U_{(x_1,\dots, x_p)}, k,l,
U_{(z_1,\dots, z_q)})
$$
and
$$
\sigma_A^k(x') = \sigma_A^l(z') \quad
\text{ and } \quad
f^k(x') = f^l(z')
$$
for all 
$
(x', n, z') \in 
\mathcal{U}(U_{(x_1,\dots, x_p)}, k,l,
U_{(z_1,\dots, z_q)}).
$
This means that 
$\mathcal{U}(U_{(x_1,\dots, x_p)}, k,l,
U_{(z_1,\dots, z_q)})$
is an open neighborhood of $(x, k-l,z)$
in $G_{A,f}$,
proving that $G_{A,f}$ is open in $G_A$.

We will next show that 
$G_{A,f}$ is closed in $G_A$.
Let $g_\alpha=(x_\alpha, n_\alpha, z_\alpha) \in G_{A,f}, \alpha \in \Lambda$
be a net converging to $g =(x,n,z) \in G_A$. 
Take $k,l\in \Zp$ such that 
$n = k-l, \sigma_A^k(x) = \sigma_A^l(z)$.
Since  both $f^k, f^l$ are continuous on $X_A$
one may find  $p, q \in \N$ with $p\ge k, \, q\ge l$
satisfying  
\eqref{eq:fkxp}, respectively.
As
$(x,n,z) \in 
\mathcal{U}(U_{(x_1,\dots, x_p)}, k,l,U_{(z_1,\dots, z_q)}),$
there exists $\alpha\in \Lambda$ such that 
$g_\alpha 
\in 
\mathcal{U}(U_{(x_1,\dots, x_p)}, k,l,U_{(z_1,\dots, z_q)}),$
so that 
\begin{equation}\label{eq:3.2.3}
n_\alpha = k-l, \quad
x_\alpha\in U_{(x_1,\dots, x_p)}, \quad
z_\alpha\in U_{(z_1,\dots, z_q)}, \quad
\sigma_A^k(x_\alpha) = \sigma_A^l(z_\alpha)
\end{equation}
and hence $n_\alpha = n$.
By 
$x_\alpha\in U_{(x_1,\dots, x_p)},
z_\alpha\in U_{(z_1,\dots, z_q)}$
as in \eqref{eq:3.2.3},
we have by \eqref{eq:fkxp}
\begin{equation}\label{eq:3.2.5}
f^k(x) =f^k(x_\alpha), \qquad f^l(z) =f^l(z_\alpha).
\end{equation}
On the other hand, 
as $g_\alpha =(x_\alpha, n_\alpha, z_\alpha) \in G_{A,f},$
there exist $k_\alpha, l_\alpha \in \Zp$ such that 
\begin{equation}\label{eq:3.2.4}
n_\alpha = k_\alpha -l_\alpha, \quad
\sigma_A^{k_\alpha}(x_\alpha) = \sigma_A^{l_\alpha}(z_\alpha), \quad
f^{k_\alpha}(x_\alpha) =f^{l_\alpha}(z_\alpha). 
\end{equation}
We note that for any $p_\alpha >k_\alpha$ and $q_\alpha >l_\alpha$
such that 
$p_\alpha - q_\alpha = n_\alpha
$
we have
\begin{equation*}
\sigma_A^{p_\alpha}(x_\alpha) = \sigma_A^{q_\alpha}(z_\alpha), \quad
f^{p_\alpha}(x_\alpha) =f^{q_\alpha}(z_\alpha). 
\end{equation*}
We indeed see that 
\begin{gather*}
\sigma_A^{p_\alpha}(x_\alpha) =
\sigma_A^{p_\alpha-k_\alpha}(\sigma_A^{k_\alpha}(x_\alpha)) =
\sigma_A^{q_\alpha-l_\alpha}(\sigma_A^{l_\alpha}(z_\alpha)) =
\sigma_A^{q_\alpha}(z_\alpha), \\
f^{p_\alpha}(x_\alpha) 
=f^{k_\alpha}(x_\alpha) +f^{p_\alpha-k_\alpha}(\sigma_A^{k_\alpha}(x_\alpha)) 
=f^{l_\alpha}(z_\alpha) +f^{q_\alpha-l_\alpha}(\sigma_A^{l_\alpha}(z_\alpha)) 
=f^{q_\alpha}(z_\alpha).
\end{gather*}
Hence we may assume that $k_\alpha > k$ and $l_\alpha >l$.
As we have
\begin{equation*}
f^{k_\alpha}(x_\alpha) 
 =f^k(x_\alpha) +f^{k_\alpha-k}(\sigma_A^k(x_\alpha)), \qquad  
f^{l_\alpha}(z_\alpha) 
 =f^l(z_\alpha) +f^{l_\alpha-l}(\sigma_A^l(z_\alpha))  
\end{equation*}
together with
 $f^{k_\alpha}(x_\alpha)=f^{l_\alpha}(z_\alpha)$,
$\sigma_A^k(x_\alpha) = \sigma_A^l(z_\alpha)$
and
$k_\alpha - k = l_\alpha -l$
by \eqref{eq:3.2.3} and \eqref{eq:3.2.4},
we see that 
\begin{equation}\label{eq:3.2.6}
f^k(x_\alpha)= f^l(z_\alpha).
\end{equation}
By \eqref{eq:3.2.5} and \eqref{eq:3.2.6},
we conclude 
$f^k(x)= f^l(z)$
so that 
$g = (x,n,z) $ belongs to $G_{A,f}$,
proving $G_{A,f}$ is closed in $G_A$.
\end{proof}
We note that if $f\equiv 0$, then $G_{A,f} = G_A$,
and
if $f\equiv 1$, then $G_{A,f} = G_A^{\AF}.$
Hence the \'etale groupoid $G_{A,f}$
is a generalization of both $G_A$ and $G_A^\AF$.
\begin{lemma}
Keep the above notation.
\begin{enumerate}
\renewcommand{\theenumi}{\roman{enumi}}
\renewcommand{\labelenumi}{\textup{(\theenumi)}}
\item $G_{A,f}$ is an \'etale groupoid.
\item $G_{A,f}$ is essentially principal.
\item $G_{A,f}$ is amenable.
\end{enumerate}
\end{lemma}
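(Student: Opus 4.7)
The plan is to derive all three properties from the preceding lemma, which establishes that $G_{A,f}$ is a clopen subgroupoid of $G_A$ with the same unit space $X_A$; each of (i)--(iii) is then inherited from the corresponding property of $G_A$. For (i), since $G_{A,f}$ is open in the étale groupoid $G_A$, the range and source maps of $G_A$, being local homeomorphisms onto $X_A$, restrict to local homeomorphisms on $G_{A,f}$ onto $G_{A,f}^{(0)} = X_A$. The multiplication and inversion on $G_{A,f}$ are restrictions of those on $G_A$, hence continuous. Consequently $G_{A,f}$ is étale.

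For (ii), I would use that for each $x \in X_A$, the isotropy group of $G_{A,f}$ at $x$ is a subgroup of the isotropy group of $G_A$ at $x$. Thus it suffices to show that the set of $x \in X_A$ with trivial $G_A$-isotropy is dense in $X_A$. A point $x$ has trivial $G_A$-isotropy precisely when $x$ is not eventually periodic under $\sigma_A$, i.e.\ when no $k > l \ge 0$ satisfy $\sigma_A^k(x) = \sigma_A^l(x)$. Density of such aperiodic points in $X_A$ for an irreducible non-permutation matrix $A$ is the classical fact underlying essential principality of $G_A$ itself, and any such $x$ a fortiori has trivial $G_{A,f}$-isotropy.

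For (iii), I appeal to a general principle: an open subgroupoid of an amenable étale groupoid is itself amenable (see Anantharaman-Delaroche--Renault). Since $G_A$ is amenable---this is the standard fact responsible for nuclearity of $\OA$---and $G_{A,f}$ is clopen in $G_A$, amenability of $G_{A,f}$ follows. The only substantive point in the whole argument is the density of non-eventually-periodic points in (ii), which uses irreducibility and the non-permutation hypothesis on $A$ in an essential way; parts (i) and (iii) are formal inheritance from structural properties already established for $G_A$.
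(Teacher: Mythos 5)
Your proposal is correct, and for parts (i) and (iii) it is essentially the paper's own argument: (i) is proved there by exhibiting the basic open sets $\mathcal{U}(U_{(x_1,\dots,x_k)},k,l,U_{(z_1,\dots,z_l)})$ on which $r$ is injective, which is just the explicit form of your ``openness in an \'etale groupoid'' inheritance, and (iii) is verbatim the same appeal to the fact that a clopen subgroupoid of an amenable \'etale groupoid is amenable. The only genuine divergence is in (ii). The paper works with the characterization of essential principality via the interior of the isotropy bundle: writing $G'_{A,f}$ for the isotropy bundle of $G_{A,f}$, it sandwiches $G_{A,f}^{(0)}=\Int(G_{A,f}^{(0)})\subset\Int(G'_{A,f})\subset\Int(G'_A)=G_A^{(0)}=G_{A,f}^{(0)}$, using that $G_{A,f}$ is open in $G_A$ so that interiors agree; this is a purely formal two-line computation. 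You instead use the ``topologically principal'' formulation --- density of points with trivial isotropy --- together with the observation that isotropy groups of $G_{A,f}$ sit inside those of $G_A$, and then invoke density of non-eventually-periodic points. Your route is also valid (and arguably more concrete about where irreducibility and the non-permutation hypothesis enter), but it tacitly relies on the equivalence, for Hausdorff \'etale groupoids, between density of trivial-isotropy units and the condition $\Int(G'_{A,f})=G_{A,f}^{(0)}$ that Matui's simplicity results actually require; if you take that equivalence for granted you should at least flag it, whereas the paper's interior argument avoids the issue entirely. Both proofs ultimately reduce to the essential principality of $G_A$ itself, so nothing substantive is gained or lost either way.
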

\begin{proof}
(i) It suffices to show that 
the map $r : G_{A,f} \longrightarrow G_{A,f}$ 
is a local homeomorphism.
Take 
$(x,n,z) \in G_{A,f}$.
There exists $k,l \in \Zp$ such that 
$\sigma_A^k(x) = \sigma_A^l(z)$
and
$n= k-l, 
f^k(x)= f^l(z).
$
Consider the following open neighborhood $\mathcal{U}$ 
of $(x,n,z)$ in $G_{A,f}$
\begin{align*}
\mathcal{U} 
= & \mathcal{U}(U_{(x_1,\dots,x_k)}, k,l, U_{(z_1,\dots,z_l)} ) \\
= &\{ (y, k-l, w) \in G_{A,f} \mid
 y \in U_{(x_1,\dots,x_k)}, w \in U_{(z_1,\dots,z_l)},
\sigma_A^k(y) = \sigma_A^l(w), f^k(y) = f^l(w)\}.
\end{align*}
It is straightforward to see that 
$r: \mathcal{U}\longrightarrow r(U)$ is injective
and hence homeomorphic by definition of the topology on $G_{A,f}$,
so that  the groupoid
$G_{A,f}$ is an \'etale groupoid.

(ii)
Put the isotropy bundles
$G^\prime_A = \{ (x, n, x) \in G_A \mid x \in X_A\}
$ and
\begin{equation*}
G^\prime_{A,f} = \{ (x, n, x) \in G_{A,f} \mid x \in X_A\},
 \end{equation*}
so that 
$G_{A,f}^{(0)} \subset G^\prime_{A,f} \subset G_A^\prime$. 
To prove that $G_{A,f}$ is essentially principal, 
we will show that the set 
$\Int(G^\prime_{A,f})$ of interiers of $G^\prime_{A,f}$ 
coincides with $G_{A,f}^{(0)}$.
As 
$G_{A,f}$ is \'etale, 
the unit space 
$G_{A,f}^{(0)}$ is open in $G_{A,f}$. 
Since $G_A$ is essentially principal,
$
\Int(G_A^{\prime}) = G_A^{(0)}  
$
so that we have
$$
G_{A,f}^{(0)} =\Int(G_{A,f}^{(0)}) \subset \Int(G^\prime_{A,f})
\subset \Int(G^\prime_A) 
= G_A^{(0)} = G_{A,f}^{(0)},
$$
proving  that 
$
G_{A,f}$
is essentially principal.

(iii)
Since $G_A$ is \'etale and amenable
together with $G_{A,f}$ being clopen in $G_A$,
we see that $G_{A,f}$ is amenable 
(see for example \cite[Proposition 4.1.14]{Sims}).
\end{proof}
Hence we have the following proposition.
\begin{proposition}\label{prop:essprinamenable}
Let $A$ be an irreducible, non permutation matrix with entries in $\{0,1\}$. 
For any $f \in C(X_A,\Z)$,
the cocycle groupoid 
 $G_{A,f}$ is an essentially principal amenable clopen \'etale subgroupoid of $G_A$.
\end{proposition}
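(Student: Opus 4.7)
My plan is to verify the four claimed properties --- subgroupoid, clopen, étale, essentially principal, amenable --- in turn, with the cocycle identity $f^{n+k}(x) = f^n(x) + f^k(\sigma_A^n(x))$ doing most of the algebraic work.

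First I would check that $G_{A,f}$ is closed under the groupoid operations inherited from $G_A$. Inversion is immediate from $f^0 \equiv 0$. For composition of $(x,n,z)$ and $(z,m,w)$ with respective witnesses $(k,l)$ and $(p,q)$, I would combine them as $(k+p,\,l+q)$: one checks $\sigma_A^{k+p}(x) = \sigma_A^{l+q}(w)$ directly, and two applications of the cocycle identity together with $\sigma_A^k(x)=\sigma_A^l(z)$ collapse $f^{k+p}(x) - f^{l+q}(w)$ to zero.

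Next I would show $G_{A,f}$ is clopen in $G_A$. For openness, fix $(x,n,z) \in G_{A,f}$ with witnesses $k,l$; since $f^k$ and $f^l$ are $\Z$-valued continuous functions on the Cantor set $X_A$, they are locally constant, so choosing cylinders $U_{(x_1,\dots,x_p)}$ and $U_{(z_1,\dots,z_q)}$ on which they are constant, the basic open set $\mathcal{U}(U_{(x_1,\dots,x_p)}, k, l, U_{(z_1,\dots,z_q)})$ sits inside $G_{A,f}$. For closedness I would take a net $g_\alpha = (x_\alpha,n_\alpha,z_\alpha) \to (x,n,z)$ in $G_A$, fix witnesses $k,l$ for the limit, and pass to a tail inside the neighborhood just described so that $n_\alpha = n$ and the local constancy gives $f^k(x_\alpha) = f^k(x)$, $f^l(z_\alpha) = f^l(z)$. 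The witnesses $k_\alpha, l_\alpha$ of $g_\alpha$ need not match $k,l$, but one can enlarge them so that $k_\alpha - k = l_\alpha - l$, then apply the cocycle identity to reduce $f^{k_\alpha}(x_\alpha) = f^{l_\alpha}(z_\alpha)$ down to $f^k(x_\alpha) = f^l(z_\alpha)$; this bookkeeping with variable witness lengths is where I expect the main technical friction.

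The remaining three properties follow almost formally. Because $G_{A,f}$ is a clopen subset of the étale groupoid $G_A$, the range map restricts to a local homeomorphism on each basic set above, giving étaleness. For essential principality, since $G_{A,f}^{(0)} = G_A^{(0)}$ and $G_{A,f} \subset G_A$, writing $G^\prime$ for the isotropy bundle, the chain
\[
G_{A,f}^{(0)} \subset \Int(G^\prime_{A,f}) \subset \Int(G^\prime_A) = G_A^{(0)} = G_{A,f}^{(0)}
\]
forces equality. Amenability of $G_{A,f}$ then passes from $G_A$ by the standard fact that amenability descends to clopen subgroupoids, completing the proof.
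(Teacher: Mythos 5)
Your proposal is correct and follows essentially the same route as the paper: combining witnesses additively for the groupoid operations, using local constancy of the $\Z$-valued functions $f^k, f^l$ on cylinder sets for openness, enlarging the witnesses $k_\alpha, l_\alpha$ of the approximating net to align with $k,l$ for closedness, and then deducing \'etaleness, essential principality (via the same sandwich of interiors of isotropy bundles), and amenability from the clopen inclusion into $G_A$. The only cosmetic quibble is that inversion is immediate from the symmetry of the defining conditions (swap the witnesses $k$ and $l$) rather than from $f^0\equiv 0$, but this does not affect the argument.
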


\subsection{The $C^*$-algebra $C^*(G_{A,f})$ }

 In this subsection we will study 
 the groupoid $C^*$-algebra $C^*(G_{A,f})$.
Let $C_c(G)$ stand for the $*$-algebra of complex valued 
compactly supported continuous functions on $G$.
Its product structure and $*$-involution in $C_c(G)$ 
are defined by  
\begin{gather*}
(\xi*\eta)(t) = \sum_{s\in G, r(s) = r(t)} \xi(s) \eta(s^{-1}t),
\qquad
\xi^*(t) = \overline{\xi(t^{-1})}
\end{gather*} 
for $\xi, \eta \in C_c(G), t \in G$.
Define the $C^*$-algebra $C^*(G)$
by the completion of $C_c(G)$
by the universal $C^*$-norm on $C_c(G)$.
For the amenable \'etale groupoid $G$,
the $C^*$-algebra coincides with 
the reduced $C^*$-algebra $C_r^*(G)$ of $G$
(cf. 
\cite{Renault}, \cite{Sims}).
For the groupoid $G_A$, it is well-known that
there exists an isomorphism $\Phi: C^*(G_A) \longrightarrow \OA$
of $C^*$-algebras such that $\Phi(C(G_A^{(0)})) = \DA.$
\begin{lemma}\label{lem:embedHA}
Let $f \in C(X_A,\Z)$.
There exists a natural embedding 
$\iota_f:C^*(G_{A,f}) \hookrightarrow  C^*(G_A)$
such that 
$\iota_f(C(G_{A,f}^{(0)})) = C(G_A^{(0)}).$
\end{lemma}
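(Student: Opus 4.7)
The plan is to construct $\iota_f$ by the canonical ``extend by zero'' map on compactly supported continuous functions and then invoke amenability plus essential principality to upgrade it to an injective $*$-homomorphism on the completions. First I would note that $G_{A,f}^{(0)} = \{(x,0,x)\mid x\in X_A\} = G_A^{(0)}$ as sets, and that by Proposition \ref{prop:essprinamenable} the subgroupoid $G_{A,f}$ is clopen in $G_A$. Thus for any $\xi \in C_c(G_{A,f})$, the function $\tilde\xi$ on $G_A$ defined by $\tilde\xi = \xi$ on $G_{A,f}$ and $\tilde\xi = 0$ on $G_A\setminus G_{A,f}$ is continuous (since $G_{A,f}$ is open and closed, this gluing is continuous on both open pieces) and compactly supported, hence lies in $C_c(G_A)$. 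This gives a linear map $\iota_f^\circ : C_c(G_{A,f}) \to C_c(G_A)$.

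Next I would verify that $\iota_f^\circ$ is a $*$-homomorphism. The involution is obviously preserved since the inverse operation restricts to $G_{A,f}$. For the convolution product, the point to check is that for $\xi,\eta \in C_c(G_{A,f})$ and $t\in G_A$, the sum $\sum_{s\in G_A,\, r(s)=r(t)} \tilde\xi(s)\tilde\eta(s^{-1}t)$ agrees with the convolution in $C_c(G_{A,f})$ when $t\in G_{A,f}$ and vanishes otherwise. The key observation is that $G_{A,f}$ is a subgroupoid, so if $s,\, s^{-1}t\in G_{A,f}$ then $t = s\cdot(s^{-1}t)\in G_{A,f}$; conversely, if $t\in G_{A,f}$ and $s\in G_{A,f}$, then $s^{-1}t\in G_{A,f}$. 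This characterizes the support of $\tilde\xi\cdot\tilde\eta(s,t)$ and identifies the two convolutions.

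To pass to the $C^*$-completion, I would use that both $G_A$ and $G_{A,f}$ are amenable étale (Proposition \ref{prop:essprinamenable}), so $C^*(G_A) = C^*_r(G_A)$ and $C^*(G_{A,f}) = C^*_r(G_{A,f})$. On the reduced level, extension by zero is isometric on the fibers of the regular representation because each $x\in X_A$ gives a representation of $C_c(G_{A,f})$ on $\ell^2(r^{-1}_{G_{A,f}}(x))$ which is simply the restriction of the regular representation of $C_c(G_A)$ on $\ell^2(r^{-1}_{G_A}(x))$ to a reducing subspace; hence $\iota_f^\circ$ is norm-decreasing and extends to a $*$-homomorphism $\iota_f : C^*(G_{A,f}) \to C^*(G_A)$. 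The identification $G_{A,f}^{(0)} = G_A^{(0)}$ yields immediately that $\iota_f$ restricts to the identity on $C(G_A^{(0)})$, giving the required $\iota_f(C(G_{A,f}^{(0)})) = C(G_A^{(0)})$.

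The main obstacle is injectivity of $\iota_f$. The cleanest route is the Cuntz--Krieger/Renault uniqueness theorem for essentially principal amenable étale groupoids: since $G_{A,f}$ is essentially principal and amenable, any $*$-homomorphism out of $C^*(G_{A,f})$ that is injective on $C(G_{A,f}^{(0)})$ is injective on the whole algebra. Because $\iota_f$ restricts to the identity on $C(G_{A,f}^{(0)}) = C(G_A^{(0)})$, it is injective, completing the proof. An alternative and equally valid route is to exhibit a conditional expectation $E : C^*(G_A) \to C(G_A^{(0)})$ induced by restriction to the unit space and observe that $E\circ\iota_f$ coincides with the corresponding faithful conditional expectation on $C^*(G_{A,f})$, again forcing injectivity.
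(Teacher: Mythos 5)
Your proposal is correct and follows essentially the same route as the paper: extend by zero across the clopen subgroupoid $G_{A,f}\subset G_A$ to embed $C_c(G_{A,f})$ into $C_c(G_A)$, pass to the completions (the paper via universality of the full norm, you via the reduced norm --- equivalent here by amenability), and deduce injectivity from the fact that a $*$-homomorphism out of an essentially principal amenable \'etale groupoid $C^*$-algebra that is injective on $C(G_{A,f}^{(0)})$ is injective, exactly as in the paper's citation of \cite[Theorem 4.2.9]{Sims}. Your extra verifications (the convolution computation and the reducing-subspace argument) only flesh out steps the paper leaves implicit.
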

\begin{proof}
Since $G_{A,f}$ is  clopen in $G_A$,
the $*$-algebra $C_c(G_{A,f})$ is naturally embedded  
into $C_c(G_A)$.
By the universality of the $C^*$-norm on $C^*(G_{A,f}),$
there exists a $*$-homomorphism
$\iota_f:C^*(G_{A,f}) \longrightarrow C^*(G_A).$
As it is identical on $C(G_{A,f}^{(0)})$,
the 
$*$-homomorphism
$\iota_f:C^*(G_{A,f}) \longrightarrow C^*(G_A)$
is actually injective
(cf. \cite[Theorem 4.2.9]{Sims}).
Hence the natural embedding $C_c(G_{A,f}) \hookrightarrow C_c(G_A)$
induces the embedding $\iota_f:C^*(G_{A,f}) \hookrightarrow C^*(G_A)$
as $C^*$-subalgebra such that 
$\iota_f(C(G_{A,f}^{(0)})) = C(G_A^{(0)}).$
\end{proof}
We will show the following theorem.
\begin{theorem}\label{thm:main1}
Assume that $A$ is an irreducible, non permutation matrix with entries in $\{0,1\}$.
Let $f \in C(X_A,\Z)$.
Then there exsits an isomorphism $\varPhi: C^*(G_A) \longrightarrow \OA$
of $C^*$-algebra such that 
$\varPhi( C^*(G_{A,f})) =\F_{A,f}$
and
$\varPhi( C(G_{A,f}^{(0)})) =\DA$. 
\end{theorem}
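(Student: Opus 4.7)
The plan is to use the standard $*$-isomorphism $\varPhi: C^*(G_A)\to \OA$ that sends characteristic functions of the basic compact open bisections to the generating partial isometries of $\OA$, and to verify that under $\varPhi$ the subalgebra $C^*(G_{A,f})$, sitting inside $C^*(G_A)$ via the inclusion $\iota_f$ of Lemma \ref{lem:embedHA}, is precisely $\FAf$. The assertion concerning the unit space is immediate, since $G_{A,f}^{(0)} = G_A^{(0)}$ as topological spaces, giving $\varPhi(C(G_{A,f}^{(0)})) = \varPhi(C(G_A^{(0)})) = \DA$.

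The heart of the argument is a direct identification of generators. For $\mu,\nu\in B_*(X_A)$, the basic open set
\[
V(\mu,\nu) := \mathcal{U}(U_\mu, |\mu|, |\nu|, U_\nu)
= \{(\mu z,\ |\mu|-|\nu|,\ \nu z) \in G_A \mid \mu z,\ \nu z \in X_A\}
\]
is a compact open bisection of $G_A$, and under $\varPhi$ its characteristic function $\chi_{V(\mu,\nu)}$ is sent to $S_\mu S_\nu^*$. I would then verify that $V(\mu,\nu)\subset G_{A,f}$ if and only if $f^{|\mu|}(\mu z) = f^{|\nu|}(\nu z)$ for every admissible $z$ (taking $k=|\mu|, l=|\nu|$ in the definition of $G_{A,f}$), which, via the identification $\DA\cong C(X_A)$, is exactly the algebraic identity $f^{|\mu|} S_\mu S_\nu^* = S_\mu S_\nu^* f^{|\nu|}$ of Lemma \ref{lem:smusnufaf}. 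Hence $V(\mu,\nu)\subset G_{A,f}$ holds precisely when $S_\mu S_\nu^* \in \FAf$.

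For the inclusion $\varPhi(C^*(G_{A,f}))\subseteq \FAf$, I would approximate an arbitrary element of $C_c(G_{A,f})$ by finite linear combinations of characteristic functions of compact open bisections contained in $G_{A,f}$. Using compactness together with continuity of the integer-valued cocycles $f^k$ to refine cylinder sets until $f^k$ and $f^l$ become locally constant (exactly as in the openness argument for $G_{A,f}$), each such bisection decomposes into a finite disjoint union of $V(\mu,\nu)$'s contained in $G_{A,f}$, so its $\varPhi$-image lies in $\FAf$. Conversely, for $\FAf \subseteq \varPhi(C^*(G_{A,f}))$, Proposition \ref{prop:generated} tells us that $\FAf$ is generated by the partial isometries $S_\mu S_\nu^*$ satisfying the cocycle identity, and each such generator is the image under $\varPhi$ of $\chi_{V(\mu,\nu)} \in C_c(G_{A,f})$.

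The step I expect to be the main obstacle is this refinement: verifying that every compact open bisection $W \subset G_{A,f}$ admits a finite decomposition into basic bisections $V(\mu,\nu)$ all of which themselves lie in $G_{A,f}$. Membership of a point $(y,k-l,w)$ in $G_{A,f}$ depends on pointwise values of $f^k$ and $f^l$, so one must choose refinements of cylinder neighborhoods large enough that both $f^k$ and $f^l$ are constant on them, thereby promoting the pointwise cocycle equality at a single point to the same equality on the whole refined bisection. Once this is done, the remainder of the argument is a routine unwinding of the groupoid $C^*$-algebra construction.
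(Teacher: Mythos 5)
Your proposal is correct and follows essentially the same route as the paper: both identify the standard isomorphism $\chi_{U_{\mu,\nu}}\mapsto S_\mu S_\nu^*$, use Lemma \ref{lem:embedHA} to embed $C^*(G_{A,f})$ into $C^*(G_A)$, and match generators of $C^*(G_{A,f})$ against the generators of $\FAf$ provided by Lemma \ref{lem:smusnufaf} and Proposition \ref{prop:generated}. The refinement step you flag as the main obstacle (making $f^{|\mu|}$, $f^{|\nu|}$ locally constant on cylinder sets) is sound, since these are continuous integer-valued functions on a Cantor set; the paper handles the same point implicitly via its sets $V_{\mu,\nu}$ and the clopenness argument for $G_{A,f}$.
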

\begin{proof}
By Lemma \ref{lem:embedHA}, 
we may regard $C^*(G_{A,f})$ 
as a $C^*$-subalgebra of $C^*(G_A).$
For $\mu, \nu \in B_*(X_A)$ with
$|\mu| =k,\, |\nu| = l$, put
\begin{equation*}
U_{\mu,\nu} = \{(x,k-l,y) \in G_A \mid 
x \in U_\mu, y \in U_\nu, \, \sigma_A^k(x)=\sigma_A^l(y)\}
\end{equation*}
that is a clopen set in $G_A$.
Let $\chi_{U_{\mu,\nu}}$ 
be the characteristic function of $U_{\mu,\nu}$ on $G_A$.
It is well-known that 
the correspondence 
$\Phi: \chi_{U_{\mu,\nu}}\in C^*(G_A)\longrightarrow S_\mu S_\nu^* \in \OA$
gives rise to an isomorphism
from $C^*(G_A)$ onto $\OA$.
Put
\begin{equation*}
V_{\mu,\nu} = \{(x,|\mu| - |\nu|,z) \in G_A \mid 
x \in U_\mu, z \in U_\nu, \, \sigma_A^{|\mu|}(x)=\sigma_A^{|\nu|}(z), \,
f^{|\mu|}(x) = f^{\nu|}(z) \}
\end{equation*}
By noticing that the continuous functions 
$f^{|\mu|}, f^{|\nu|}$ on $X_A$ are regarded as 
elements of $C_c(G_{A,f})$ through the natural identification 
between $X_A$ an $G_{A,f}^{(0)}$ so that  
\begin{equation*}
f^{|\mu|}(x, n,z) =
\begin{cases}
f^{|\mu|}(x) & \text{ if } n=0, \, z=x, \\
0 & \text{ oterwise, }
\end{cases}
\qquad
f^{|\nu|}(x, n,z) =
\begin{cases}
f^{|\nu|}(x) & \text{ if } n=0, \, z=x, \\
0 & \text{ oterwise, }
\end{cases}
\end{equation*}
we have
\begin{align*}
(f^{|\mu|} * \chi_{V_{\mu,\nu}})(x,n,z)
 =&  f^{|\mu|}(x) \chi_{V_{\mu,\nu}}(x,n,z), \\
( \chi_{V_{\mu,\nu}}*f^{|\nu|} )(x,n,z)
 =& \chi_{V_{\mu,\nu}}(x,n,z) f^{|\nu|}(z).
\end{align*}
Hence we have
\begin{equation*}
f^{|\mu|} * \chi_{V_{\mu,\nu}}
=
\chi_{V_{\mu,\nu}}*f^{|\nu|}.
\end{equation*}
The $C^*$-algebra $C^*(G_{A,f})$ 
is generated by the characteristic functions
$\chi_{V_{\mu,\nu}}$
of $V_{\mu,\nu}, \mu, \nu \in B_*(X_A)$.
By using Proposition \ref{prop:generated}
together with Lemma \ref{lem:embedHA},
we have
$\Phi:C^*(G_A) \longrightarrow \OA$
satisfies
$\varPhi( C^*(G_{A,f})) =\F_{A,f}$
and
$\varPhi( C(G_{A,f}^{(0)})) =\DA$. 
\end{proof}


We will state a simplicity condition
of the $C^*$-algebra $C^*(G_{A,f})$
in terms of the function $f$
in the following way.
Following \cite{MatuiPLMS},
 an \'etale groupoid $G$ is said to be {\it minimal}\/
 if for any $z\in G^{(0)}$, 
 the set $\{r(g) \in G^{(0)} \mid s(g) =z\}$
 is dense in $G^{(0)}.$  
\begin{definition}\label{def:fminimal}
A function $f \in C(X_A,\Z)$ is said to be {\it minimal}\/ if
for $z \in X_A$ and $\mu \in B_*(X_A)$, there exist
$x \in U_\mu$ and $k, l \in \Zp$ such that 
\begin{equation}\label{eq:defsimplef}
\sigma_A^k(x) = \sigma_A^l(z), \qquad
f^k(x) = f^l(z).
\end{equation}
\end{definition}
By the above definition of the minimality of the function $f$,
the next lemma is immediate. 
\begin{lemma}\label{lem:fminimal}
A function $f\in C(X_A,\Z)$ is minimal if and only if the groupoid 
$G_{A,f}$ is minimal.
\end{lemma}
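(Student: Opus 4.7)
The plan is to unfold both definitions against the natural identification $G_{A,f}^{(0)} \cong X_A$, $(x,0,x) \leftrightarrow x$, and observe that the two conditions coincide term by term once one recalls what the orbit of a unit looks like in $G_{A,f}$.

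First I would fix $z \in X_A$ and describe the orbit $\mathcal{O}_f(z) := \{r(g) \mid g \in G_{A,f},\, s(g) = z\}$ inside $G_{A,f}^{(0)} \cong X_A$. By the source/range maps inherited from $G_A$, an element $g \in G_{A,f}$ with $s(g) = z$ has the form $g = (x,n,z)$ for some $x \in X_A$ and $n \in \Z$, with $r(g) = x$. Membership $(x,n,z) \in G_{A,f}$ means that there exist $k,l \in \Zp$ with $n = k-l$, $\sigma_A^k(x) = \sigma_A^l(z)$, and $f^k(x) = f^l(z)$. Hence
\[
\mathcal{O}_f(z) = \{\, x \in X_A \mid \exists\, k,l \in \Zp \text{ with } \sigma_A^k(x)=\sigma_A^l(z) \text{ and } f^k(x)=f^l(z)\,\}.
\]

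Next I would use the fact that the cylinder sets $\{U_\mu \mid \mu \in B_*(X_A)\}$ form a basis for the topology of $X_A$, so density of $\mathcal{O}_f(z)$ in $X_A$ is equivalent to the statement that $\mathcal{O}_f(z) \cap U_\mu \neq \emptyset$ for every $\mu \in B_*(X_A)$. Substituting the description of $\mathcal{O}_f(z)$ above, this becomes: for every $\mu \in B_*(X_A)$ there exist $x \in U_\mu$ and $k,l \in \Zp$ with $\sigma_A^k(x)=\sigma_A^l(z)$ and $f^k(x)=f^l(z)$, which is exactly the condition in Definition \ref{def:fminimal}. Quantifying over $z$ on both sides yields the equivalence.

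Since both directions are immediate from the matching of quantifiers, there is essentially no obstacle; the only thing to be careful about is the identification of $G_{A,f}^{(0)}$ with $X_A$ (so that "dense in $G_{A,f}^{(0)}$" and "dense in $X_A$" agree) and the use of cylinder sets as a neighborhood basis. I would write the proof as a short chain of equivalences rather than separate if/only-if parts.
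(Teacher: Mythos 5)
Your proof is correct and matches the paper's treatment: the paper simply declares the lemma ``immediate'' from the definitions, and your argument is exactly the unfolding of the orbit $\{r(g)\mid s(g)=z\}$ and the cylinder-set basis that justifies that claim. No issues.
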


We note that the \'etale groupoid $G_{A,f}$ is essentially principal and amenable.
By \cite{MatuiPLMS}, 
an essentially principal amenable \'etale groupoid $G$ is minimal 
if and only if the $C^*$-algebra $C^*(G)$ of the groupoid is simple.
Therefore we have the following proposition.
\begin{proposition}\label{prop:simplicity}
A function $f\in (X_A,\Z)$ is minimal if and only if the $C^*$-algebra $\FAf$ is simple.
\end{proposition}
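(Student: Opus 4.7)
The plan is to assemble three ingredients already in place and chain them together, since the conclusion is essentially a transport of Matui's simplicity criterion along the identifications we have built.

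First I would invoke Theorem \ref{thm:main1} to transfer the question from $\FAf$ to the groupoid $C^*$-algebra: since $\varPhi:C^*(G_A)\to\OA$ is an isomorphism carrying $C^*(G_{A,f})$ onto $\FAf$, the algebra $\FAf$ is simple if and only if $C^*(G_{A,f})$ is simple. This reduces the statement to a purely groupoid-theoretic one: $C^*(G_{A,f})$ is simple if and only if $f$ is minimal.

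Next I would recall Proposition \ref{prop:essprinamenable}, which tells us that $G_{A,f}$ is an essentially principal amenable étale groupoid (with compact unit space $X_A$). For such groupoids, the result of Matui \cite{MatuiPLMS} (as cited in the paragraph just before the proposition) states that $C^*(G)$ is simple if and only if $G$ is minimal. Applying this to $G=G_{A,f}$ gives simplicity of $C^*(G_{A,f})$ equivalent to minimality of the groupoid $G_{A,f}$.

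Finally I would apply Lemma \ref{lem:fminimal}, which translates minimality of $G_{A,f}$ back to minimality of the potential $f$ in the sense of Definition \ref{def:fminimal}. Combining the three equivalences yields
\begin{equation*}
\FAf \text{ simple} \;\iff\; C^*(G_{A,f}) \text{ simple} \;\iff\; G_{A,f} \text{ minimal} \;\iff\; f \text{ minimal},
\end{equation*}
which is the desired conclusion. There is essentially no obstacle here beyond verifying the hypotheses of the Matui criterion, and those were precisely the content of Proposition \ref{prop:essprinamenable}; the real work was done in establishing the preceding theorem and the essential-principality/amenability of $G_{A,f}$.
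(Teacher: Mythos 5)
Your proof is correct and follows exactly the route the paper takes: the paper likewise combines Theorem \ref{thm:main1}, Proposition \ref{prop:essprinamenable}, Matui's simplicity criterion for essentially principal amenable \'etale groupoids, and Lemma \ref{lem:fminimal}. No issues.
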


\subsection{Continuous orbit equivalence}

Let $A, B$ be irreducible non permutation matrices with entries in $\{0,1\}$.
It is well-known that 
the groupoids $G_A$ and $G_B$  
are isomorphic as \'etale groupoids if and only if
there exists an isomorphism $\Phi: \OA\longrightarrow \OB$ of $C^*$-algebras such that 
$\Phi(\DA) = \DB$ 
(cf. \cite{MMKyoto},  \cite{MatuiPLMS}, \cite{MatuiCrelle}, \cite{Renault2}, \cite{Renault3}).
In \cite{MaPacific}, 
a notion called continuous orbit equivalence generalizing topological conjugacy in one-sided 
topological Markov shifts was introduced. 
One-sided topological Mrkov shifts
$(X_A,\sigma_A)$ and $(X_B,\sigma_B)$ are said to be 
{\it continuous orbit equivalent}\/ if there exist $k_1, l_1 \in C(X_A,\Zp)$
and $k_2, l_2 \in C(X_B,\Zp)$ such that 
$\sigma_B^{k_1(x)}(h(\sigma_A(x))) = \sigma_B^{l_1(x)}(h(x)), x \in X_A$
and
$\sigma_A^{k_2(y)}(h^{-1}(\sigma_B(y))) = \sigma_A^{l_2(y)}(h^{-1}(y)), y \in X_B.$
As in \cite{MaPacific}, 
it was shown that 
$(X_A,\sigma_A)$ and $(X_B,\sigma_B)$ are continuous orbit equivalent 
if and only if  
there exists an isomorphism $\Phi: \OA\longrightarrow \OB$ 
of $C^*$-algebras such that 
$\Phi(\DA) = \DB$. 
Hence 
one-sided topological Mrkov shifts
$(X_A,\sigma_A)$ and $(X_B,\sigma_B)$ are  
 continuous orbit equivalent
 if and only if 
the \'etale groupoids $G_A$ and $G_B$ are isomorphic
(\cite{MMKyoto}).
In our situation, we see the following proposition.
\begin{proposition}\label{prop:coeandcocycle}
Let $f \in C(X_A, \Z)$ and $g \in C(X_B,\Z)$.
There exists an isomorphism $\Phi: \OA\longrightarrow \OB$ of $C^*$-algebras such that 
$\Phi(\DA) = \DB$ and $\Phi(\F_{A,f}) = \F_{B,g}$ if and only if
there exists an isomorphism $\varphi: G_A\longrightarrow G_B$
of \'etale groupoids such that 
$\varphi(G_{A,f}) = G_{B,g}$.
\end{proposition}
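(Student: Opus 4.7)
The plan is to translate the statement through Theorem \ref{thm:main1}, which identifies the inclusions $C(G_A^{(0)})\subset C^*(G_{A,f})\subset C^*(G_A)$ with $\DA\subset \F_{A,f}\subset \OA$, and similarly on the $B$-side. Under this identification, the proposition becomes the equivalence between an isomorphism of the triples of $C^*$-algebras and an isomorphism of the corresponding triples of clopen \'etale groupoids. Both directions rest on the fact that, for essentially principal amenable \'etale groupoids such as $G_A$ and $G_B$, the groupoid is recovered from its $C^*$-algebra together with the canonical Cartan subalgebra, and clopen subgroupoids are recovered from the corresponding intermediate $C^*$-subalgebras.

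For the direction $(\Leftarrow)$, I would use functoriality: an isomorphism $\varphi: G_A \longrightarrow G_B$ of \'etale groupoids induces a $*$-isomorphism $C_c(G_A) \longrightarrow C_c(G_B)$ by pushforward, which extends to a $C^*$-isomorphism $C^*(G_A) \longrightarrow C^*(G_B)$. Since $\varphi$ automatically carries the unit space to the unit space and, by hypothesis, maps $G_{A,f}$ onto $G_{B,g}$, this $C^*$-isomorphism restricts to $C(G_A^{(0)}) \longrightarrow C(G_B^{(0)})$ and to $C^*(G_{A,f}) \longrightarrow C^*(G_{B,g})$. Transporting through Theorem \ref{thm:main1} yields the required $\Phi: \OA \longrightarrow \OB$ with $\Phi(\DA) = \DB$ and $\Phi(\F_{A,f}) = \F_{B,g}$.

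The direction $(\Rightarrow)$ is the main content. Since $G_A$ is essentially principal and amenable, the pair $(\OA, \DA)$ is a Cartan pair of the type considered by Renault and Matui, and the groupoid $G_A$ is reconstructed from this pair (and similarly $(\OB,\DB)$ recovers $G_B$). Any $\Phi: \OA \longrightarrow \OB$ with $\Phi(\DA) = \DB$ therefore induces an isomorphism $\varphi: G_A \longrightarrow G_B$ of \'etale groupoids, described explicitly through the correspondence between points of $G_A$ and germs of normalizers of $\DA$ in $\OA$: for $(x,n,z) \in G_A$ one chooses a small bisection $V \ni (x,n,z)$, so that $\chi_V$ is a normalizing partial isometry in $\OA$, and defines $\varphi(x,n,z)$ via the bisection of $G_B$ corresponding to $\Phi(\chi_V)$.

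The remaining task, which I expect to be the main obstacle, is to verify $\varphi(G_{A,f}) = G_{B,g}$. The key observation is that, since $G_{A,f}$ is clopen and \'etale in $G_A$ (Proposition \ref{prop:essprinamenable}), a point $(x,n,z) \in G_A$ lies in $G_{A,f}$ if and only if there exists an open bisection $V \subset G_A$ containing $(x,n,z)$ whose characteristic function $\chi_V$ lies in $C^*(G_{A,f}) = \F_{A,f}$; by Proposition \ref{prop:generated}, such $\chi_V$ correspond to partial isometries $S_\mu S_\nu^*$ satisfying the cocycle identity $f^{|\mu|} S_\mu S_\nu^* = S_\mu S_\nu^* f^{|\nu|}$, and by Lemma \ref{lem:smusnufaf} this is precisely the condition that $S_\mu S_\nu^* \in \F_{A,f}$. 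Applying $\Phi$ and the analogous intrinsic characterization on the $B$-side gives $\varphi(G_{A,f}) \subset G_{B,g}$, and the reverse inclusion follows by the same argument applied to $\Phi^{-1}$.
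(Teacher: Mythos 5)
Your proposal is correct and follows essentially the same route as the paper: the forward direction via functoriality of $G \mapsto C^*(G)$ through Theorem \ref{thm:main1}, and the converse via the Renault--Matui reconstruction of $G_A$ from the Cartan pair $(\OA,\DA)$ by germs of normalizers, with the clopen subgroupoid $G_{A,f}$ recovered from the intermediate subalgebra $\F_{A,f}=C^*(G_{A,f})$. Your write-up is in fact more detailed than the paper's (which mostly defers to Renault's Proposition 4.1 and Matui's Theorem 5.17), but the underlying argument is the same.
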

\begin{proof}
We note that 
$\F_{A,f}= C^*(G_{A,f})$ and 
$\F_{B,g} = C^*(G_{B,g})$.
Hence by the proof of 
\cite[Proposition 4.1]{Renault2} 
(see also \cite[Theorem 5.17]{MatuiPLMS}),
we know the desired assertion.
We in fact see that the condition that 
there exists an isomorphism $\varphi: G_A\longrightarrow G_B$
of \'etale groupoids such that 
$\varphi(G_{A,f}) = G_{B,g}$
implies an isomorphism $\Phi: \OA\longrightarrow \OB$ of $C^*$-algebras such that 
$\Phi(\DA) = \DB$ and $\Phi(\F_{A,f}) = \F_{B,g}$.
Conversely,
suppose that there exists an isomorphism $\Phi: \OA\longrightarrow \OB$ 
of $C^*$-algebras such that 
$\Phi(\DA) = \DB$ and $\Phi(\F_{A,f}) = \F_{B,g}$.
By considering of germs of the normalizers of the subalgebras $\DA\subset \OA$
and $\DB \subset \OB$, 
we know that the additional condition 
$\Phi(\FAf) = \F_{B,g}$ implies the restriction of the isomorphism
$\varphi: G_A \longrightarrow G_B$ of \'etale groupoids yields 
the equality $\varphi(G_{A,f}) = G_{B,g}$.
\end{proof}

\begin{corollary}\label{cor:3.11}
Suppose that there exists a homeomorphism
$h:X_A\longrightarrow X_B$ that gives rise to a 
continuous orbit equivalence between $(X_A,\sigma_A)$ and $(X_B, \sigma_B)$.
\begin{enumerate}
\renewcommand{\theenumi}{\roman{enumi}}
\renewcommand{\labelenumi}{\textup{(\theenumi)}}
\item
There exists an isomorphism $\varphi_h:G_A\longrightarrow G_B$
of \'etale groupoids such that 
$\varphi_h(G_{A,\Psi_h(g)})=G_{B,g}$ 
for all $g\in C(X_B,\Z)$,
where $\Psi_h(g)\in C(X_A,\Z)$ is defined by
\begin{equation}\label{eq:Psihg}
\Psi_h(g)(x) = \sum_{i=0}^{l_1(x)}g(\sigma_B^i(h(x))) -
\sum_{j=0}^{k_1(x)}g(\sigma_B^j(h(\sigma_A(x)))),
\qquad  x \in X_A.
\end{equation}  
In particular,  the \'etale groupoids
$G_{A,\Psi_h(g)}$ and $G_{B,g}$
are isomorphic.
\item
There exists an isomorphism $\Phi: \OA\longrightarrow \OB$ 
of $C^*$-algebras such that 
$\Phi(\DA) = \DB$ and $\Phi(\F_{A,\Psi_h(g)}) = \F_{B,g}$ for all $g \in C(X_B,\Z)$.
\end{enumerate}
\end{corollary}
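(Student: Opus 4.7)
The plan is to reduce (ii) to (i) using Proposition \ref{prop:coeandcocycle}, and to prove (i) by combining the standard construction of the groupoid isomorphism $\varphi_h$ from a continuous orbit equivalence with a telescoping identity that identifies $\Psi_h(g)$ as the pullback cocycle. First I would recall from \cite{MMKyoto} that a continuous orbit equivalence $h:X_A\longrightarrow X_B$ with cocycles $k_1,l_1 \in C(X_A,\Zp)$ and $k_2,l_2 \in C(X_B,\Zp)$ gives rise to a canonical isomorphism $\varphi_h: G_A\longrightarrow G_B$ defined, on a typical element $(x,n,y)\in G_A$ with $n=k-l$ and $\sigma_A^k(x)=\sigma_A^l(y)$, by $\varphi_h(x,n,y) = (h(x),\, K(x,k) - L(y,l),\, h(y))$, where $K(\cdot,k)$ and $L(\cdot,l)$ are the iterated cocycles built from $k_1,l_1$ using the chain rule $K(x,k+1)=K(x,k)+k_1(\sigma_A^k(x))$ (and similarly $L$). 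One checks in the standard way that $K$ and $L$ transform the relations $\sigma_A^k(x)=\sigma_A^l(y)$ into $\sigma_B^{K(x,k)}(h(x)) = \sigma_B^{L(y,l)}(h(y))$, which makes $\varphi_h$ a well-defined groupoid isomorphism.

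Next I would set up the key telescoping identity for $\Psi_h(g)$. Defining $\Psi_h(g)^n$ by \eqref{eq:fnx} and using the definition \eqref{eq:Psihg} of $\Psi_h(g)$, a direct induction on $n$ (using the cocycle property $\sigma_B^{k_1(x)}(h(\sigma_A(x)))=\sigma_B^{l_1(x)}(h(x))$ at each step) should yield
\begin{equation*}
\Psi_h(g)^n(x) \;=\; \sum_{i=0}^{L(x,n)-1} g(\sigma_B^i(h(x))) \;-\; \sum_{j=0}^{K(x,n)-1} g(\sigma_B^j(h(\sigma_A^n(x)))),
\end{equation*}
so that $\Psi_h(g)^n(x)$ is exactly a telescoped sum of $g$-values along the orbit of $h(x)$ in $X_B$. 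Granting this, if $(x,n,y)\in G_A$ with $\sigma_A^k(x) = \sigma_A^l(y)$ and $n=k-l$, then under $\varphi_h$ the equality $\Psi_h(g)^k(x) = \Psi_h(g)^l(y)$ becomes, after cancellation of the common $g$-sum over the tail orbit $h(\sigma_A^k(x))=h(\sigma_A^l(y))$, precisely the equality $g^{K(x,k)}(h(x)) = g^{L(y,l)}(h(y))$. Thus $(x,n,y) \in G_{A,\Psi_h(g)}$ if and only if $\varphi_h(x,n,y) \in G_{B,g}$, which proves $\varphi_h(G_{A,\Psi_h(g)}) = G_{B,g}$.

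Part (ii) is then immediate from part (i) together with Proposition \ref{prop:coeandcocycle}: taking $f=\Psi_h(g)$, the groupoid isomorphism $\varphi_h$ with $\varphi_h(G_{A,\Psi_h(g)})=G_{B,g}$ induces an isomorphism $\Phi:\OA\longrightarrow\OB$ with $\Phi(\DA)=\DB$ and $\Phi(\F_{A,\Psi_h(g)})=\F_{B,g}$, using the identifications $\F_{A,f}=C^*(G_{A,f})$ and $\F_{B,g}=C^*(G_{B,g})$ supplied by Theorem \ref{thm:main1}.

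The main obstacle I expect is verifying the telescoping identity for $\Psi_h(g)^n$ cleanly. While $\Psi_h(g)$ is defined as a single-step expression, one needs to track the interaction between the iterated cocycles $K(x,n), L(x,n)$ and the cocycle $g^{(\cdot)}$ under the Bowen-type relation $\sigma_B^{k_1(x)}(h(\sigma_A(x)))=\sigma_B^{l_1(x)}(h(x))$; the bookkeeping of index shifts in the two sums is where small errors are likely to hide, and one must be careful that the $K$ and $L$ built from $k_1,l_1$ match the indices appearing in $g^{K(x,k)}(h(x))$ and $g^{L(y,l)}(h(y))$. Everything else is standard functoriality.
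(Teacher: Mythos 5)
Your proposal is correct in substance but runs in the opposite direction from the paper. The paper proves the $C^*$-algebraic statement first: it invokes \cite[Theorem 3.2]{MaMZ}, which produces from the continuous orbit equivalence an isomorphism $\Phi:\OA\to\OB$ with $\Phi(\DA)=\DB$ intertwining the generalized gauge actions, $\Phi\circ\rho^{A,\Psi_h(g)}_t=\rho^{B,g}_t$; matching fixed-point algebras immediately gives $\Phi(\F_{A,\Psi_h(g)})=\F_{B,g}$ (assertion (ii)), and then Proposition \ref{prop:coeandcocycle} converts this into the groupoid statement (i). You instead prove (i) directly at the groupoid level via the telescoping identity $\Psi_h(g)^n(x)=g^{L(x,n)}(h(x))-g^{K(x,n)}(h(\sigma_A^n(x)))$ and deduce (ii) from Proposition \ref{prop:coeandcocycle}. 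Your identity is correct (note that for $n=1$ it agrees with the definition \eqref{eq:Psihg} only after cancelling the top terms $g(\sigma_B^{l_1(x)}(h(x)))=g(\sigma_B^{k_1(x)}(h(\sigma_A(x))))$ via the orbit-equivalence relation, which you implicitly use), and the subsequent cancellation over the common tail does yield $g^{k'}(h(x))-g^{l'}(h(y))=\Psi_h(g)^k(x)-\Psi_h(g)^l(y)$ for the witnesses $k'=L(x,k)+K(y,l)$, $l'=K(x,k)+L(y,l)$; this is exactly the content hidden inside the citation of \cite{MaMZ} in the paper. Your route is more self-contained and makes the mechanism visible, at the cost of the index bookkeeping you yourself flag; the paper's route is shorter but outsources the computation. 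One concrete slip: your formula $\varphi_h(x,n,y)=(h(x),K(x,k)-L(y,l),h(y))$ has the wrong middle integer --- it should be $\bigl(L(x,k)+K(y,l)\bigr)-\bigl(K(x,k)+L(y,l)\bigr)$, i.e.\ $c^k(x)-c^l(y)$ with $c=l_1-k_1$ --- though since you defer to the standard construction of \cite{MMKyoto} this does not damage the argument. You should also note, as the paper's closedness lemma does, that the defining conditions of $G_{A,f}$ and $G_{B,g}$ are stable under enlarging the witnesses $(k,l)\mapsto(k+e,l+e)$, so that checking the equivalence for one pair of witnesses suffices for both inclusions of $\varphi_h(G_{A,\Psi_h(g)})=G_{B,g}$.
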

\begin{proof}
(i) Assume that 
a homeomorphism
$h:X_A\longrightarrow X_B$ gives rise to a 
continuous orbit equivalence between $(X_A,\sigma_A)$ and $(X_B, \sigma_B)$. 
By \cite[Theorem 3.2]{MaMZ}, 
there exists an isomorphism $\Phi:\OA\longrightarrow \OB$ 
of $C^*$-algebras such that $\Phi(\DA) = \DB$ and 
$\Phi\circ \rho^{A,\Psi_h(g)}_t = \rho^{B,g}_t, t \in \T$.
Hence we see that   
$\Phi(\F_{A,\Psi_h(g)}) = \F_{B,g}$ for all $g \in C(X_B,\Z)$
so that by Proposition \ref{prop:coeandcocycle},
there exists an isomorphism $\varphi_h:G_A\longrightarrow G_B$
of \'etale groupoids such that 
$\varphi_h(G_{A,\Psi_h(g)})=G_{B,g}$ 
for all $g\in C(X_B,\Z).$

(ii) The assertion follows from (i) together with Proposition \ref{prop:coeandcocycle}.
\end{proof}

\begin{corollary}\label{cor:conjugacygroupoids}
One-sided topological Markov shifts
 $(X_A,\sigma_A)$ and $(X_B, \sigma_B)$
 are topologically conjugate if and only if there exists an isomorphism
 $\varphi:G_A\longrightarrow G_B$ of \'etale groupoids
 such that
 $\varphi(G_{A, g\circ h}) = G_{B,g}$ for all 
 $g \in C(X_B,\Z),$ 
where $h:X_A \longrightarrow X_B$
is a homeomorphism defined by the restriction of $\varphi$
to its unit space $G_A^{(0)}$ under the identification between 
$G_A^{(0)}$ and $X_A$, and  $G_B^{(0)}$ and $X_B$, respectively. 
\end{corollary}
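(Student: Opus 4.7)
The plan is to derive both implications of the corollary from Corollary \ref{cor:3.11} combined with Theorem \ref{thm:1.2}, using Proposition \ref{prop:coeandcocycle} as the bridge between groupoid isomorphisms and the corresponding $C^*$-algebra isomorphisms.

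For the forward direction, assume $(X_A,\sigma_A)$ and $(X_B,\sigma_B)$ are topologically conjugate via a homeomorphism $h:X_A\longrightarrow X_B$ satisfying $h\circ\sigma_A = \sigma_B\circ h$. Topological conjugacy is a particular instance of continuous orbit equivalence, corresponding to the choice $k_1\equiv 0$ and $l_1\equiv 1$ in the defining cocycle data (with analogous constants for $h^{-1}$). Corollary \ref{cor:3.11}(i) then produces an isomorphism $\varphi_h:G_A\longrightarrow G_B$ of \'etale groupoids with $\varphi_h(G_{A,\Psi_h(g)})=G_{B,g}$ for every $g \in C(X_B,\Z)$, and its restriction to the unit space coincides with $h$ by construction. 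Substituting $k_1\equiv 0$, $l_1\equiv 1$ and $h(\sigma_A(x))=\sigma_B(h(x))$ into the formula \eqref{eq:Psihg}, a direct calculation gives
\begin{equation*}
\Psi_h(g)(x) = g(h(x)) + g(\sigma_B(h(x))) - g(\sigma_B(h(x))) = (g\circ h)(x),
\end{equation*}
so that $\varphi_h(G_{A,g\circ h}) = G_{B,g}$ for all $g\in C(X_B,\Z)$ as required.

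For the reverse direction, suppose $\varphi:G_A\longrightarrow G_B$ is an \'etale groupoid isomorphism with $\varphi(G_{A,g\circ h})=G_{B,g}$ for all $g\in C(X_B,\Z)$, where $h:X_A\longrightarrow X_B$ is the homeomorphism obtained by restricting $\varphi$ to unit spaces. By Proposition \ref{prop:coeandcocycle}, such a $\varphi$ corresponds to a $C^*$-algebra isomorphism $\Phi:\OA\longrightarrow \OB$ with $\Phi(\DA)=\DB$ and $\Phi(\F_{A,g\circ h}) = \F_{B,g}$ for every $g\in C(X_B,\Z)$, and the homeomorphism induced by $\Phi|_{\DA}$ under Gelfand duality is again $h$. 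Given any $f\in C(X_A,\Z)$, set $g := f\circ h^{-1}\in C(X_B,\Z)$; then $f = g\circ h$ and $\Phi(f)=g$, hence $\Phi(\F_{A,f}) = \F_{B,\Phi(f)}$ holds for every $f \in C(X_A,\Z)$. Applying Theorem \ref{thm:1.2} to $\Phi$ then delivers topological conjugacy between $(X_A,\sigma_A)$ and $(X_B,\sigma_B)$.

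I expect no substantial obstacle, since the statement is essentially a transcription of Theorem \ref{thm:1.2} into groupoid language via the dictionary already established in Proposition \ref{prop:coeandcocycle} and Corollary \ref{cor:3.11}. The main delicacy is bookkeeping: tracking the same homeomorphism $h$ simultaneously as a topological conjugacy of shifts, as the restriction of $\varphi$ to unit spaces, and as the map induced by $\Phi|_{\DA}$ via Gelfand duality — all three identifications being provided by the naturality of the passage between groupoids, subalgebras, and cocycle potentials.
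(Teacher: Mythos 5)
Your proposal is correct and follows essentially the same route as the paper: both directions reduce to Theorem \ref{thm:1.2} combined with Proposition \ref{prop:coeandcocycle}, with your forward direction merely making explicit (via Corollary \ref{cor:3.11}(i) and the computation $\Psi_h(g)=g\circ h$ for a conjugacy) what the paper's two-line proof leaves implicit. The extra bookkeeping you supply — identifying the conjugacy $h$, the restriction of $\varphi$ to unit spaces, and the Gelfand-dual of $\Phi|_{\DA}$ — is accurate and consistent with the constructions the paper cites.
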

\begin{proof}
By Theorem \ref{thm:1.2},
 $(X_A,\sigma_A)$ and $(X_B, \sigma_B)$
 are topologically conjugate if and only if there exists an isomorphism
$\Phi:\OA\longrightarrow \OB$ of $C^*$-algebras
such that $\Phi(\DA) = \DB$ and 
$\Phi(\F_{A,g\circ h}) = \F_{B,g}$
for all $g \in C(X_B,\Z).$
Hence the assertion follows from Proposition \ref{prop:coeandcocycle}.
\end{proof}


\section{Three classes of cocycle algebras}
We fix an irreducible non permutation matrix $A$ with entries in $\{0,1\}.$
For $b\in C(X_A,\Z)$ and $H\subset \{1,2,\dots,N\}$,
define continuous functions
$1_b, \chi_H \in C(X_A,\Z)$ by 
\begin{equation*}
1_b(x) = 1 - b(x) + b(\sigma_A(x)), \qquad
\chi_H(x) =
\begin{cases}
1 & \text{ if } x_1 \in H, \\ 
0 & \text{ if } x_1 \not\in H,  
\end{cases}
\end{equation*}
for  $x=(x_n)_{n\in \N}  \in X_A.$
For $x \in X_A$ and $k \in \N$,
we write
$x_{[k,\infty)} = \sigma_A^{k-1}(x) \in X_A.$
\begin{lemma}
For a nonempty subset $H \subset \{1,2,\dots,N\}$,
 a continuous function $b \in C(X_A,\Z)$
and a positive integer valued function $f \in C(X_A,\N)$,
we have 
\begin{enumerate}
\renewcommand{\theenumi}{\roman{enumi}}
\renewcommand{\labelenumi}{\textup{(\theenumi)}}
\item 
If $\chi_H = 1_b$, 
then 
$H= \{1,2,\dots, N\}$ and $b$ is a constant. 
\item 
If $1_b = f$, 
then 
  $b$ is a constant and $f \equiv 1$. 
\item 
If $f =\chi_H $, 
then 
$f \equiv 1$ and  $H= \{1,2,\dots, N\}$. 
\end{enumerate}
\end{lemma}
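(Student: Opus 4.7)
The plan rests on a single telescoping identity combined with density properties coming from the irreducibility of $A$. For any $b \in C(X_A,\Z)$ and $n \in \N$, the definition of $1_b$ together with \eqref{eq:fnx} gives
\begin{equation*}
1_b^n(x) = \sum_{i=0}^{n-1}\bigl(1 - b(\sigma_A^i(x)) + b(\sigma_A^{i+1}(x))\bigr) = n - b(x) + b(\sigma_A^n(x)).
\end{equation*}
In particular, whenever $x$ is a $\sigma_A$-periodic point of period $n$ one has $1_b^n(x) = n$. I will use this identity in conjunction with two standard consequences of irreducibility of $A$: periodic points are dense in $X_A$, and every symbol $j \in \{1,\dots,N\}$ occurs in some admissible periodic word.

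For (i), suppose $\chi_H = 1_b$. Evaluating $\chi_H^n$ at a periodic point $x$ of period $n$ and comparing with the telescoping identity yields $\sum_{i=0}^{n-1}\chi_H(\sigma_A^i(x)) = n$, which forces every symbol appearing in $x$ to lie in $H$. Since each symbol of $\{1,\dots,N\}$ occurs in some admissible periodic word, this yields $H = \{1,\dots,N\}$ and hence $\chi_H \equiv 1$. Substituting back, $1_b \equiv 1$ gives $b \circ \sigma_A = b$; since $b$ is continuous and $(X_A,\sigma_A)$ admits a dense forward orbit by irreducibility, $b$ is constant along that orbit and therefore constant on $X_A$.

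For (ii), the same telescoping identity yields $f^n(x) = n$ for every periodic $x$ of period $n$, and since $f \geq 1$ pointwise this forces $f(\sigma_A^i(x)) = 1$ for each $0 \le i \le n-1$. Density of periodic points together with continuity of $f$ then gives $f \equiv 1$ everywhere; once this is established, $1_b \equiv 1$ and the argument of (i) shows $b$ is constant. For (iii), if $f = \chi_H$ then $f \geq 1$ forces $\chi_H \geq 1$, so $x_1 \in H$ for every $x \in X_A$; irreducibility implies every symbol actually occurs as some $x_1$, so $H = \{1,\dots,N\}$ and $f = \chi_H \equiv 1$.

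The main obstacle, such as it is, lies in the step of (i) and (ii) where one passes from $b \circ \sigma_A = b$ to the constancy of $b$: this depends on topological transitivity of $(X_A,\sigma_A)$, which is exactly where the hypothesis that $A$ be irreducible (and not merely nondegenerate) is essential. The telescoping identity is a one-line check, and the remainder of the argument is a direct exploitation of the hypotheses that $f$ take positive integer values or that $\chi_H$ take only the values $0$ and $1$.
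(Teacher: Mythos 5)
Your proof is correct, and for part (ii) it takes a genuinely different route from the paper's. For (i) the two arguments are close cousins: the paper telescopes $b$ along forward orbits to get $b(x)=N_{H^c}(x_1,\dots,x_n)+b(\sigma_A^n(x))$ and uses boundedness of the continuous function $b$ to force $N_{H^c}$ bounded, hence $H=\{1,\dots,N\}$ by irreducibility; you instead evaluate at periodic points, where the coboundary telescopes away exactly, which is the same mechanism packaged slightly more cleanly. For (ii), however, the paper proves constancy of $b$ \emph{first}: from $f\ge 1$ it extracts the monotonicity $b\circ\sigma_A\ge b$ and then, assuming $b(y)>b(z)$, explicitly builds a point $w=\mu_y\xi\mu_z z_{[K+1,\infty)}$ whose tail is a shift of $z$ but which starts in the cylinder where $b$ is large, contradicting monotonicity; $f\equiv 1$ then follows. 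You reverse the order: the telescoping identity forces $f^n(x)=n$ at every $n$-periodic point, hence $f=1$ on periodic points, hence everywhere by density of periodic points, and then $b\circ\sigma_A=b$ plus a dense forward orbit gives constancy. Both are sound; your version is more uniform (one identity plus density arguments) but leans on two standard facts about irreducible one-sided SFTs (density of periodic points, topological transitivity with a dense forward orbit) that the paper's bare-hands construction does not need to invoke. It would be worth a sentence in your write-up justifying each of these two facts from irreducibility, since they carry the real weight of your argument.
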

\begin{proof}
(i)
Suppose that $\chi_H = 1_b$.
For $x =(x_n)_{n\in \N} \in X_A$ and $k \in \N$,
we have
\begin{equation*}
b(x_{[k,\infty)}) =
\begin{cases}
b(x_{[k+1,\infty)}) & \text{ if } x_k \in H, \\
1 + b(x_{[k+1,\infty)}) & \text{ if } x_k \not\in H,
\end{cases}
\end{equation*}
so that  for $n \in \N$
\begin{equation*}
b(x) = N_{H^c}(x_1,\dots, x_n) + b(x_{[n+1,\infty)}),
\qquad x \in X_A,
\end{equation*}
holds where 
$N_{H^c}(x_1,\dots, x_n)  
= |\{i \mid x_i \not\in H, i=1,2,\dots, n \}|$.
Since $b:X_A \longrightarrow \Z$ is continuous and hence bounded,
so that $N_{H^c}(x_1,\dots, x_n)$ 
must be bounded  for all $x \in X_A$ and $n \in \N$.
As $A$ is irreducible, it does not occur unless
$H = \{1,2,\dots,N\}$.
Therefore we conclude that 
$H = \{1,2,\dots,N\}$, so that $b$ is a constant.

(ii)
Suppose that $1_b = f$ for some $f \in C(X_A, \N)$.
Since $f(x) \ge 1$ for any $x \in X_A$, we have
$1 -b(x) + b(\sigma_A(x)) \ge 1$ and hence
\begin{equation}\label{bsigma}
b(\sigma_A(x)) \ge b(x) \qquad \text{ for all } x \in X_A.
\end{equation} 
Assume that $b$ is not constant so that there exist
$y, z \in X_A$ such that 
\begin{equation}\label{eq:yz}
b(y) > b(z).
\end{equation}
Since $b \in C(X_A,\Z)$, one may find $K \in \N$ 
such that  
$$
b = \sum_{\nu \in B_K(X_A)} b_\nu \chi_{U_\nu} \qquad 
\text{ for some } b_\nu \in \Z \text{ for }\nu \in B_K(X_A).
$$
Hence there exist
$\mu_y, \mu_z \in B_K(X_A)$ such that 
$$
y \in U_{\mu_y},\quad z \in U_{\mu_z}\quad 
\text{ and }
\quad
b(y) = b_{\mu_y}, \quad b(z) = b_{\mu_z}.
$$
As the matrix $A$ is irreducible,
there exists $\xi \in B_M(X_A)$ such that 
$\mu_y \xi \mu_z \in B_{2K + M}(X_A)$,
so that 
$w := \mu_y \xi \mu_z z_{[k+1,\infty)}  \in X_A$.
As $w \in U_{\mu_y}$,
we have 
$b(w) = b_{\mu_y} = b(y)$.
On the other hand,
we have by \eqref{bsigma}
\begin{equation}\label{eq:bzby}
b(z) = b(\mu_z z_{[K+1,\infty)}) = b(\sigma_A^{K+M}(w)) \ge b(w) = b(y),
\end{equation} 
a contradiction to \eqref{eq:yz}.
We thus conclude that $b$ is a constant and hence $f \equiv 1$.

(iii)
Suppose that $\chi_H(x) = f(x) \ge 1 $ for all $x \in X_A$.
This implies that $H = \{1,2,\dots, N\}$.
\end{proof}

\subsection{Support subalgebras}

In this subsection, we will study the first class of cocycle algebras
called support algebras.
For $H \subset \{1,2,\dots, N\}$,
let us denote by $\rho^{A,H}$
the gauge action  $\rho^{A,\chi_H}$
with potential function $\chi_H$.
\begin{definition}
Define the $C^*$-subalgebra $\F_{A, H}$ of $\OA$
by  the cocycle algebra $\F_{A,\chi_H}$ for the funtion $\chi_H$,
that is defined by the fixed point subalgebra  of $\OA$ under the action $\rho^{A,H}$
\begin{equation}\label{eq:defFAH}
\F_{A,H} : = 
\{ X \in \OA \mid \rho^{A,H}_t(X) = X \text{ for all } t \in \T\}.
\end{equation}  
\end{definition}
The algebra $\F_{A,H}$ is called the {\it support algebra for}\/ $H$.
If $H = \{ 1, 2, \dots, N\},$ then $\rho^{A,H}_t =\rho^A_t$ 
so that $\F_{A,H} = \FA$.
If $H = \emptyset,$ then $\rho^{A,H}_t = \id $ so that $\F_{A,H} = \OA$.

For a word
$\mu = (\mu_1,\dots,\mu_m) \in B_m(X_A)$, 
let 
\begin{equation*}
N_H(\mu) := | \{ i \in \{ 1,\dots, m \} \mid \mu_i \in H \}|
\end{equation*}
the cardinal number of symbols in $\{\mu_1, \dots,\mu_m\} $ contained in $H$. 
\begin{lemma}\label{lem:4.3}
For $\mu,\nu \in B_*(X_A)$ such that $S_\mu S_\nu^* \ne 0$,
 we have 
$S_\mu S_\nu^* \in \F_{A,H}$ 
if and only if $N_H(\mu) = N_H(\nu)$.
\end{lemma}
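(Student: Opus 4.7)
The plan is to reduce to Lemma \ref{lem:smusnufaf} applied to $f = \chi_H$, and then carry out an explicit computation of $f^{|\mu|} \, S_\mu S_\nu^*$ and $S_\mu S_\nu^* \, f^{|\nu|}$ in $\DA$-coordinates. The key observation is that, under the identification $C(X_A) \cong \DA$, the function $\chi_H$ corresponds to $\sum_{i \in H} S_i S_i^*$, and more generally
\[
\chi_H^{\,k} \;=\; \sum_{\xi \in B_k(X_A)} N_H(\xi)\, S_\xi S_\xi^*,
\]
which is easily verified by induction on $k$ using the identity $f^{k+1}(x) = f^k(x) + f(\sigma_A^k(x))$ together with $\chi_H \circ \sigma_A^k = \sum_{\xi \in B_k(X_A)} S_\xi \chi_H S_\xi^*$.

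First I would invoke Lemma \ref{lem:smusnufaf} with $f = \chi_H$: the partial isometry $S_\mu S_\nu^*$ lies in $\F_{A,H}$ if and only if $\chi_H^{\,|\mu|} S_\mu S_\nu^* = S_\mu S_\nu^* \chi_H^{\,|\nu|}$. Next I would plug in the formula above and use the orthogonality of cylinder projections of fixed length, namely $S_\xi S_\xi^* \cdot S_\mu = \delta_{\xi,\mu} S_\mu$ for $|\xi| = |\mu|$, and similarly $S_\nu^* \cdot S_\eta S_\eta^* = \delta_{\eta,\nu} S_\nu^*$ for $|\eta| = |\nu|$. This yields
\[
\chi_H^{\,|\mu|} \, S_\mu S_\nu^* = N_H(\mu)\, S_\mu S_\nu^*,
\qquad
S_\mu S_\nu^* \, \chi_H^{\,|\nu|} = N_H(\nu)\, S_\mu S_\nu^*.
\]

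Finally, since $S_\mu S_\nu^* \neq 0$ by hypothesis, comparing the two sides shows that the equality $\chi_H^{\,|\mu|} S_\mu S_\nu^* = S_\mu S_\nu^* \chi_H^{\,|\nu|}$ is equivalent to $N_H(\mu) = N_H(\nu)$, proving the lemma. There is no substantial obstacle here: the computation is entirely formal once one recognizes that $\chi_H^{\,k}$ is supported, as an element of $\DA$, on cylinders of length $k$ and takes the value $N_H(\xi)$ on $U_\xi$. The only point requiring a little care is the step $S_\xi S_\xi^* S_\mu = \delta_{\xi,\mu} S_\mu$, which follows from $S_\mu^* S_\mu$ being a projection and the disjointness of distinct cylinders of the same length.
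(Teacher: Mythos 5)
Your proof is correct and follows essentially the same route as the paper: both reduce the statement to Lemma \ref{lem:smusnufaf} with $f=\chi_H$ and both hinge on the identity $\chi_H^{|\mu|}S_\mu = N_H(\mu)S_\mu$ (the paper derives it by a telescoping expansion of $\chi_H^{|\mu|}$ applied to $S_\mu$, you by writing $\chi_H^{k}$ as $\sum_{\xi\in B_k(X_A)}N_H(\xi)S_\xi S_\xi^*$ and using orthogonality of same-length cylinder projections, which is only a cosmetic difference). The final step, using $S_\mu S_\nu^*\neq 0$ to pass from $N_H(\mu)S_\mu S_\nu^*=N_H(\nu)S_\mu S_\nu^*$ to $N_H(\mu)=N_H(\nu)$, matches the paper as well.
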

\begin{proof}
Since $\chi_H$ is identified with $\sum_{j\in H}S_j S_j^*$,
for $\mu =(\mu_1,\dots,\mu_m) \in B_m(X_A)$
we have
\begin{equation*}
\chi_H S_\mu = 
\begin{cases}
S_\mu & \text{ if } \mu_1 \in H, \\
0 & \text{ otherwise,}
\end{cases}
\end{equation*} 
so that 
\begin{equation*}
S_{\mu_1} \chi_H S_{\mu_1}^* S_\mu 
= S_{\mu_1} \chi_H S_{\mu_2 \cdots \mu_m} 
= 
{\begin{cases}
S_\mu & \text{ if } \mu_2 \in H, \\
0 & \text{ otherwise.}
\end{cases}}
\end{equation*}
Similarly we have for $1 \le n \le m-1$
\begin{equation*}
S_{\mu_1\cdots\mu_n} \chi_H S_{\mu_1\cdots\mu_n}^* S_\mu 
= S_{\mu_1\cdots\mu_n} \chi_H S_{\mu_{n+1} \cdots \mu_m} 
= 
{\begin{cases}
S_\mu & \text{ if } \mu_{n+1} \in H, \\
0 & \text{ otherwise.}
\end{cases}}
\end{equation*}
By the equality
\begin{equation*}
\chi_H^{|\mu|} S_\mu = (
\chi_H + S_{\mu_1} \chi_H S_{\mu_1}^* + S_{\mu_1\mu_2} \chi_H S_{\mu_1\mu_2}^* + \cdots +
 S_{\mu_1\cdots\mu_{m-1}} \chi_H S_{\mu_1\cdots\mu_{m-1}}^* )S_\mu,
\end{equation*}
we have
\begin{equation*}
\chi_H^{|\mu|} S_\mu = N_H(\mu) S_\mu.
\end{equation*}
Similarly we have
$
S_\nu^* \chi_H^{|\nu|} = (\chi_H^{|\nu|} S_\nu)^*  = N_H(\nu) S_\nu^*.
$
We thus have
$
\chi_H^{|\mu|} S_\mu S_\nu^* =S_\mu S_\nu^* \chi_H^{|\nu|}$ 
if and only if 
$ N_H(\mu) S_\mu S_\nu^* =  N_H(\nu) S_\mu S_\nu^*.$
Assume that $S_\mu S_\nu^* \ne 0$.
 By Lemma \ref{lem:smusnufaf},  
$
S_\mu S_\nu^* \in \FAH
$
if and only if 
$N_H(\mu) = N_H(\nu).$
\end{proof}
By 
Proposition \ref{prop:generated}
and
Lemma \ref{lem:4.3},
we have the following proposition.
\begin{proposition}
For $H\subset \{1,2,\dots,N\}$,
 the support algebra $\F_{A,H}$ is the $C^*$-subalgebra 
$C^*(S_\mu S_\nu^* \mid N_H(\mu) = N_H(\nu), \mu, \nu \in B_*(X_A))$
of $\OA$ 
generated by partial isometries 
$
S_\mu S_\nu^*
$
satisfying $N_H(\mu) = N_H(\nu), \mu, \nu \in B_*(X_A)$.
\end{proposition}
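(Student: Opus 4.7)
The plan is to obtain this proposition as a direct synthesis of Proposition \ref{prop:generated} and the immediately preceding Lemma \ref{lem:4.3}, with no new computation required. Specialising Proposition \ref{prop:generated} to the case $f = \chi_H$, the cocycle algebra $\F_{A,H} = \F_{A,\chi_H}$ is generated, as a $C^*$-algebra, by those partial isometries $S_\mu S_\nu^*$ with $\mu,\nu \in B_*(X_A)$ that satisfy the commutation relation $\chi_H^{|\mu|} S_\mu S_\nu^* = S_\mu S_\nu^* \chi_H^{|\nu|}$. So the task reduces to rewriting this commutation condition in combinatorial terms.

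That translation is precisely what Lemma \ref{lem:4.3} does: provided $S_\mu S_\nu^* \ne 0$, the identity $\chi_H^{|\mu|} S_\mu S_\nu^* = S_\mu S_\nu^* \chi_H^{|\nu|}$ is equivalent to $N_H(\mu) = N_H(\nu)$. I would therefore simply invoke the lemma and substitute the equivalent combinatorial condition into the generating family produced by Proposition \ref{prop:generated}. The zero partial isometries (the case where $S_\mu S_\nu^* = 0$) contribute nothing to a set of generators, so they can be discarded without loss, and the condition $N_H(\mu) = N_H(\nu)$ can be imposed on the remaining nonzero generators only.

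Putting the two results together yields
\[
\F_{A,H} = C^*\bigl(\, S_\mu S_\nu^* \mid \mu,\nu \in B_*(X_A),\ N_H(\mu) = N_H(\nu)\,\bigr),
\]
which is exactly the claim. There is no genuine obstacle here; the only point requiring a word of care is the harmless exclusion of the vanishing products $S_\mu S_\nu^* = 0$ when passing from the operator-theoretic condition of Proposition \ref{prop:generated} to the purely combinatorial condition of Lemma \ref{lem:4.3}, since the equivalence in the lemma is stated under the assumption $S_\mu S_\nu^* \ne 0$.
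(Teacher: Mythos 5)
Your proof is correct and is exactly the route the paper takes: the proposition is stated there as an immediate consequence of Proposition \ref{prop:generated} and Lemma \ref{lem:4.3}, with no further argument given. Your remark about discarding the vanishing products $S_\mu S_\nu^* = 0$ is a harmless and sensible point of care that the paper leaves implicit.
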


We will next study a family of 
\'etale subgroupoids $G_H$ 
and its $C^*$-algebras $C^*(G_H)$.
\begin{definition}
For $H \subset \{1,2,\dots, N\}$,
define an  \'etale subgroupoid $G_H$ of $G_A$
by setting
\begin{align*}
G_H := \{ &
(x, n, z) \in X_A\times \Z \times X_A \mid  \text{there exist } k,l \in \Zp \text{ such that }\\
& n= k-l, \, \sigma_A^k(x) = \sigma_A^l(z), \,   
N_H(x_1,\dots, x_k) = N_H(z_1,\dots, z_l) \}.
\end{align*}
\end{definition}
Put the unit space 
$G_H^{(0)} = \{(x, 0, x) \in G_H\mid  x \in X_A \}.$
The product and the inverse operation are inherited from $G_A$.
\begin{lemma}
$G_{A,H} =G_{A, \chi_H}$ the cocycle groupoid for $\chi_H$. 
\end{lemma}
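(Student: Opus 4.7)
The plan is straightforward: unwind the definitions and verify that when $f = \chi_H$, the cocycle $f^k$ appearing in the definition of $G_{A,f}$ reduces exactly to the counting function $N_H$ appearing in the definition of $G_H$. Once this identity is established, the two descriptions of the groupoid are visibly identical.

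More precisely, I would first recall from \eqref{eq:fnx} that for any $x = (x_n)_{n \in \N} \in X_A$ and $k \in \N$,
\[
\chi_H^k(x) = \sum_{i=0}^{k-1} \chi_H(\sigma_A^i(x)).
\]
Then I would observe that $\sigma_A^i(x) = (x_{i+1}, x_{i+2}, \dots)$ has first coordinate $x_{i+1}$, so by the definition \eqref{eq:fHmu1} of $\chi_H$,
\[
\chi_H(\sigma_A^i(x)) =
\begin{cases}
1 & \text{if } x_{i+1} \in H, \\
0 & \text{otherwise.}
\end{cases}
\]
Summing over $i = 0, 1, \dots, k-1$ counts precisely those indices $j \in \{1, 2, \dots, k\}$ with $x_j \in H$, giving the identity
\[
\chi_H^k(x) = N_H(x_1, \dots, x_k), \qquad x \in X_A, \; k \in \N,
\]
with both sides equal to $0$ when $k = 0$.

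With this identity in hand, the condition $\chi_H^k(x) = \chi_H^l(z)$ appearing in the definition of the cocycle groupoid $G_{A,\chi_H}$ coincides term by term with the condition $N_H(x_1,\dots,x_k) = N_H(z_1,\dots,z_l)$ appearing in the definition of $G_H$. Since the remaining conditions $n = k - l$ and $\sigma_A^k(x) = \sigma_A^l(z)$ are identical in the two definitions, we conclude $G_H = G_{A,\chi_H}$. There is no genuine obstacle here; the statement is essentially a notational consistency check that justifies using the abbreviation $G_{A,H}$ for the cocycle groupoid of the function $\chi_H$.
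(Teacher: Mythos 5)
Your proposal is correct and follows essentially the same route as the paper: both establish the identity $\chi_H^k(x) = N_H(x_1,\dots,x_k)$ by expanding the sum $\sum_{i=0}^{k-1}\chi_H(\sigma_A^i(x))$ and then observe that the defining conditions of $G_{A,\chi_H}$ and $G_{A,H}$ coincide. Nothing further is needed.
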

\begin{proof}
For $x = (x_n)_{n \in \N} \in X_A$, we have
\begin{equation*}
\chi_H^k(x) =\chi_H(x) + \chi_H(\sigma_A(x)) + \cdots + \chi_H(\sigma_A^{k-1}(x)) 
= N_H(x_1, x_2, \dots, x_k). 
\end{equation*}
Hence for $x =(x_n)_{n \in \N}, z=(z_n)_{n\in \N} \in X_A$
we have  
$\chi_H^k(x) = \chi_H^l(z)$ if and only if
$N_H(x_1,\dots, x_k) = N_H(z_1,\dots, z_l).$
Therefore we conclude that 
$G_{A,\chi_H} = G_{A,H}$.
\end{proof}
Hence by Proposition \ref{prop:essprinamenable} and Theorem \ref{thm:main1},
we know that 
$G_H$ is an essentially principal amenable  \'etale subgroupoid of $G_A$
such that $C^*(G_H) = \F_{A,H}$.

Let 
$\G_A=(V_A,E_A)$
be the directed graph associated with the matrix $A$.
It is defined in the following way.
 The vertex set $V_A$ is defined by $ \{1,2,\dots, N\}$,
 and the edge set $E_A$ is defined by the set of edge $(i,j) \in V_A\times V_A$  
satisfying $A(i,j) =1$ whose source vertex is the vertex $i$ 
and terminal vertex is the vertex $j$.
A path $(\mu_1,\dots,\mu_m) \in B_m(X_A)$
in the graph $\G_A$ is called a cycle 
if $\mu_1 =\mu_m$.
It is  equivalent to say that 
the word $(\mu_1,\dots,\mu_{m-1})$
is a periodic word in $B_*(X_A)$.

We henceforth assume that a subset $H\subset \{1,2,\dots, N\}$ is not empty.
\begin{definition}
A subset $H \subset \{1,2,\dots, N\}$ is said to be {\it saturated}\/ if 
any cycle in the graph $\G_A$ has a vertex in $V_H$.
This means that
if a word $(\mu_1,\dots,\mu_m) \in B_m(X_A)$
satisfies $\mu_1 = \mu_m$, then there exists $i \in \{1,2,\dots, m-1\}$
such that $\mu_i \in H$.
\end{definition}
We then have the following lemma.
\begin{lemma}\label{lem:satu}
Let $H\subset\{1,2,\dots,N\}$ be a nonempty subset.
The following assertions are equivalent. 
\begin{enumerate}
\renewcommand{\theenumi}{\roman{enumi}}
\renewcommand{\labelenumi}{\textup{(\theenumi)}}
\item 
$H$ is saturated.
\item The cardinality 
$|\{ \mu \in B_*(X_A) \mid N_H(\mu) = n \} | $ is finite
for all $n \in \N$.
\item The cardinality 
$|\{ \mu \in B_*(X_A) \mid N_H(\mu) = n \} | $ is finite
for some $n \in \N$.
\end{enumerate}
\end{lemma}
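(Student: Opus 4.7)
The plan is to prove the three-way equivalence by establishing (i)$\Rightarrow$(ii), noting that (ii)$\Rightarrow$(iii) is trivial, and deducing (iii)$\Rightarrow$(i) via its contrapositive. Throughout, set $U := \{1,\dots,N\}\setminus H$.

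For (i)$\Rightarrow$(ii) the aim is to bound $|\mu|$ in terms of $n$ under the saturation hypothesis. The key observation is that any admissible word $(w_1,\dots,w_L) \in B_L(X_A)$ with every $w_i \in U$ must be a simple path: a repetition $w_i = w_j$ with $i<j$ would yield a cycle $(w_i,\dots,w_j)$ in $\G_A$ with no vertex in $H$, contradicting saturation. Consequently any such $U$-word has length at most $|U|$. Decomposing an arbitrary $\mu$ with $N_H(\mu)=n$ as
\begin{equation*}
\mu = \omega_0\, h_1\, \omega_1\, h_2\, \cdots\, h_n\, \omega_n,
\end{equation*}
with $h_1,\dots,h_n \in H$ and each $\omega_j$ a (possibly empty) admissible word in $U$, yields $|\mu|\le n+(n+1)|U|$. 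Since the alphabet is finite, this bounds the set $\{\mu \mid N_H(\mu) = n\}$.

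For the contrapositive direction, assume $H$ is not saturated, and fix a cycle $\gamma=(v=c_1,c_2,\dots,c_{m-1},c_m=v)$ with every $c_i\in U$. The strategy is, for each fixed $n\ge 1$, first to exhibit a single admissible word $\mu^{(n)}$ with $N_H(\mu^{(n)})=n$ passing through $v$, and then to insert arbitrarily many copies of $\gamma$ at that $v$-position, producing an infinite family $\{\mu_k^{(n)}\}_{k\ge 0}$ of admissible words all with $H$-count $n$ and with lengths tending to infinity in $k$; the inserted $\gamma$'s contribute nothing to $N_H$ since $v$ and the interior of $\gamma$ lie in $U$. To build $\mu^{(n)}$, take shortest admissible words $\tau$ from $v$ to some element of $H$ and $\tau'$ from some element of $H$ to $v$; minimality forces the interior of each to lie in $U$, so $N_H(\tau)=N_H(\tau')=1$. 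Then $\{\gamma^k\tau\}_k$ handles $n=1$ and $\{\tau'\gamma^k\tau\}_k$ handles $n=2$. For $n\ge 3$, append $n-2$ further $H$-to-$H$ transits: from the current end point $h\in H$, take a shortest admissible word from $h$ into $H$ of length $\ge 2$, which by minimality has interior in $U$ and hence raises $N_H$ by exactly one; iterating this $n-2$ times produces the desired $\mu^{(n)}$ without disturbing the insertion point at $v$.

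The technical subtlety is that irreducibility alone only supplies admissible paths between prescribed vertices, with no a priori control over their $H$-counts, whereas we need the $H$-count to be exactly $n$. I expect this to be the main obstacle, but it is resolved by the shortest-path selections of $\tau$, $\tau'$, and each $H$-to-$H$ transit (each such shortest word always exists by irreducibility together with the nonemptiness of $H$ and $U$, and it has interior in $U$ by a one-line minimality argument), combined with the observation that concatenations at the $U$-vertex $v$ and at shared $H$-endpoints cannot double-count any element of $H$. Once this bookkeeping is set up, the infinite family producing the negation of (iii) falls out immediately.
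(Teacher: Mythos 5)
Your proof is correct and follows essentially the same route as the paper's: both hinge on the facts that under saturation every sufficiently long admissible word must contain cycles and hence accumulate $H$-symbols (your length bound $|\mu|\le n+(n+1)|U|$ is the paper's pigeonhole estimate $(N+1)(n+1)$ in direct form), and that a non-saturated $H$ admits an $H$-avoiding cycle which can be pumped without changing $N_H$. The only difference is that you make explicit, via shortest-path selections, the existence of a word with $N_H$ exactly $n$ meeting that cycle---a step the paper simply asserts from irreducibility and $H\neq\emptyset$.
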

\begin{proof}
(i) $\Longrightarrow$ (ii):
Assume that $H$ is saturated in $\{1,2,\dots,N\}$.
Suppose that there exists $n \in \N$ such that 
the cardinality 
$|\{ \mu \in B_*(X_A) \mid N_H(\mu) = n \} | $ is infinite.
Hence for any $p \in \N$, there exists $q\in \N$ with $q >p$
such that there exists $\nu \in B_q(X_A)$ satisfying $N_H(\nu) = n$.
Since there exists at least one cycle in a word of length $N+1$,
there exists more than $n+1$ cycles in a word of length 
longer than  $(N+1)(n+1)$.
Now $H$ is saturated so that for $p > (N+1)(n+1)$
and $q >p$, any word $\nu \in B_q(X_A)$ satisfies 
$N_H(\nu) \ge n+1$,
a contradiction.

(ii) $\Longrightarrow$ (iii): The implication is clear.

(iii) $\Longrightarrow$ (i):
Assume the assertion (iii).
Suppose that $H$ is not saturated in $\{1,2,\dots, N\}$.
There exists a periodic word 
$w =(w_1, \dots, w_p) \in B_*(X_A)$ with $A(w_p, w_1) =1$
such that $w_i \not\in H$ for all $i=1,2,\dots, p$,
so that $N_H(w) =0$.
Since $A$ is irreducible and $H \ne \emptyset$,
one may find a word $\nu \in B_*(X_A)$ such that 
$N_H(\nu) =n$ and $\nu w \in B_*(X_A)$.
Put $\nu(m) = \nu \overbrace{w\cdots w}^{m\text{ times}} \in B_*(X_A)$
so that $N_H(\nu(m)) = N_H(\nu) = n$
because $N_H(w) =0$.
The family $\{\nu(m) \mid m=1,2,\dots \}$ is infinite, a contradiction. 
\end{proof}
Under the assumption that $H$ is saturated,
for each $n \in \Zp$
the set 
$\{ S_\mu S_\nu^* \mid N_H(\mu) = N_H(\nu) = n\}$
is finite because of Lemma \ref{lem:satu}.
Let us denote by
$\F_{A,H}^n$ the linear span of 
$\{ S_\mu S_\nu^* \mid N_H(\mu) = N_H(\nu) = n\}$.  
\begin{lemma}
For $S_\mu S_\nu^*, S_\xi S_\eta^* \in \F_{A,H}^n$,
we have
$S_\mu S_\nu^*\cdot S_\xi S_\eta^* \in F_{A,H}^n$.
Hence $\F_{A,H}^n$ is a finite dimensional $C^*$-subalgebra of $\OA$
if $H$ is saturated.
\end{lemma}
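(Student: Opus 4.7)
The proof reduces to two claims: $\F_{A,H}^n$ is finite-dimensional and it is closed under multiplication. Finite-dimensionality is immediate from Lemma \ref{lem:satu}: saturation of $H$ forces $|\{\mu \in B_*(X_A) : N_H(\mu) = n\}|$ to be finite, so the spanning set $\{S_\mu S_\nu^* : N_H(\mu) = N_H(\nu) = n\}$ is finite. Closure under the $*$-operation is obvious since $(S_\mu S_\nu^*)^* = S_\nu S_\mu^*$ preserves the $N_H$ data, and a finite-dimensional $*$-closed subspace of $\OA$ is automatically norm-closed; so once multiplicative closure is established, $\F_{A,H}^n$ is a $C^*$-subalgebra.

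For multiplicative closure, the plan is to compute $(S_\mu S_\nu^*)(S_\xi S_\eta^*)$ via the standard Cuntz--Krieger calculus and split on the prefix relation between $\nu$ and $\xi$. (i) If $\xi = \nu\xi''$ with $|\xi''| \ge 1$, the product simplifies to $S_{\mu\xi''} S_\eta^*$, and the constraint $N_H(\xi) = N_H(\nu) = n$ forces $N_H(\xi'') = 0$, so $S_{\mu\xi''} S_\eta^* \in \F_{A,H}^n$. (ii) The symmetric case $\nu = \xi\nu''$ is handled analogously. (iii) If $\nu$ and $\xi$ are incompatible (neither is a prefix of the other), the product is zero. (iv) If $\nu = \xi$, the product equals
\[
S_\mu (S_\nu^* S_\nu) S_\eta^* \;=\; \sum_{k} A(\nu_m,k)\, A(\mu_m,k)\, A(\eta_{|\eta|},k)\, S_{\mu k} S_{\eta k}^*,
\]
where $\nu_m$ denotes the last letter of $\nu$.

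Case (iv) is the main obstacle. Summands with $k \in H$ have $N_H(\mu k) = N_H(\eta k) = n + 1$, so at face value they lie in $\F_{A,H}^{n+1}$ rather than $\F_{A,H}^n$. The crucial tool is the standard refinement identity
\[
S_\mu S_\eta^* \;=\; \sum_{\,k\,:\,A(\mu_m,k)\, A(\eta_{|\eta|},k)\,=\,1\,} S_{\mu k} S_{\eta k}^*,
\]
whose left-hand side already lies in $\F_{A,H}^n$ because $N_H(\mu) = N_H(\eta) = n$. Subtracting off the summands with $k \notin H$, each of which is itself a generator of $\F_{A,H}^n$, shows that the aggregate of all level-$(n+1)$ refinement terms also sits in $\F_{A,H}^n$. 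Carefully tracking how the extra constraint $A(\nu_m, k) = 1$ in the product formula selects from this aggregate, and handling the omitted terms by further applications of the refinement identity (whose iteration terminates by saturation), then lets us rewrite the product in case (iv) as a linear combination of $\F_{A,H}^n$-generators, which is the desired conclusion.
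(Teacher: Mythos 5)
Your cases (i)--(iii) reproduce the paper's argument exactly: writing $\xi=\nu\bar{\xi}$ with $\bar{\xi}$ nonempty, the product collapses to $S_{\mu\bar{\xi}}S_\eta^*$ with $N_H(\bar{\xi})=0$, and that part is correct. The problem is your case (iv), $\nu=\xi$, which you rightly single out as the main obstacle --- in fact the paper's own proof silently skips it, since its chain of equalities $S_\nu^*S_\xi=\sum_j A(\nu_p,j)S_jS_j^*S_{\bar{\xi}}=S_{\bar{\xi}}$ is only valid when $\bar{\xi}$ is nonempty. Your proposed repair does not close this gap. The refinement identity shows that the \emph{full} aggregate $\sum_{k\in H,\ A(\mu_m,k)=A(\eta_{|\eta|},k)=1}S_{\mu k}S_{\eta k}^*$ lies in $\F_{A,H}^n$, but the product only produces the sub-sum over those $k$ satisfying the extra constraint $A(\nu_m,k)=1$, and further applications of the refinement identity cannot isolate that sub-sum: refining $S_{\mu k}S_{\eta k}^*$ only yields words with $N_H\ge n+1$, so iteration moves away from level $n$ rather than back down to it.

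In fact no argument can complete case (iv), because with $\F_{A,H}^n$ defined as the span of \emph{all} $S_\mu S_\nu^*$ with $N_H(\mu)=N_H(\nu)=n$, multiplicative closure fails. Take the paper's own Example 2: $A$ the $3\times 3$ matrix with zero diagonal and all off-diagonal entries $1$, and $H=\{1,2\}$, which is saturated. Then $N_H(313)=N_H(1)=1$, so $S_{313}S_1^*$ and $S_1S_{313}^*$ are generators of $\F_{A,H}^1$, yet
\[
(S_{313}S_1^*)(S_1S_{313}^*)=S_{313}(S_1^*S_1)S_{313}^*
=S_{313}(S_2S_2^*+S_3S_3^*)S_{313}^*=S_{3132}S_{3132}^*=\chi_{U_{3132}},
\]
since $S_{3133}=0$. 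Every word $\alpha$ with $N_H(\alpha)=1$ has length at most $3$ for this matrix, and the canonical expectation onto $\DA$ kills $S_\alpha S_\beta^*$ for $\alpha\ne\beta$, so $\F_{A,H}^1\cap\DA$ consists of functions of the first three coordinates only; $\chi_{U_{3132}}$ genuinely depends on the fourth coordinate, hence the product is not in $\F_{A,H}^1$. So you have correctly located a real gap --- one the paper shares --- but the ``subtract and iterate'' strategy cannot fill it; the lemma only becomes provable after shrinking the spanning set (e.g.\ to $S_\mu S_\nu^*$ with $\mu,\nu$ concatenations of $n$ words of $\Sigma_H$ having the same terminal letter, which is what the inclusion-matrix picture via $A_H$ actually uses).
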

\begin{proof}
For $S_\mu S_\nu^*, S_\xi S_\eta^* \in \F_{A,H}^n$,
we may assume that 
$S_\mu S_\nu^*\cdot S_\xi S_\eta^*\ne 0$
and $|\nu|\le |\xi|$.
Since $S_\nu^* S_\xi \ne 0$,
we have $\xi = \nu\bar{\xi}$ for some $\bar{\xi} \in B_*(X_A)$.
As $N_H(\xi) = N_H(\nu) = n$,
we have $N_H(\bar{\xi}) =0$.
By putting $\nu = (\nu_1,\dots,\nu_p)$
and $\bar{\xi} = (\bar{\xi}_1,\dots,\bar{\xi}_q),$
we have
\begin{equation*}
S_\nu^* S_\xi 
=   S_\nu^* S_\nu S_{\bar{\xi}} 
=   S_{\nu_p}^* S_{\nu_p} S_{\bar{\xi}} 
=   \sum_{j=1}^N A(\nu_p, j)S_j S_j^* S_{\bar{\xi}} 
=   S_{\bar{\xi}}
\end{equation*}
so that 
\begin{equation*}
S_\mu S_\nu^*\cdot S_\xi S_\eta^*
= S_\mu S_{\bar{\xi}} S_\eta^*= S_{\mu \bar{\xi}} S_\eta^*.
\end{equation*}
As 
$N_H(\mu \bar{\xi}) = N_H(\mu) + N_H(\bar{\xi}) = N_H(\mu) = n = N_H(\eta)$,
we see that 
$S_\mu S_\nu^*\cdot S_\xi S_\eta^*$ belongs to $\F_{A,H}^n$.
If  $H$ is aturated,
the linear span of $\{ S_\mu S_\nu^* \mid N_H(\mu) = N_H(\nu) = n\}$
is  finite dimensional and 
closed under both multiplication and $*$-operation,
so that it is a finite dimensional $C^*$-algebra.
\end{proof}
\begin{lemma}\label{lem:icode}
Assume that $H$ is saturated.
For each $i \in \{1,2,\dots,N\}$,
there exists a finite family $\{ \omega^i(j)\}_{j=1}^{p_i}$
of words
 $\omega^i(j) = (\omega^i_1(j), \dots, \omega_{\ell(j)}^i(j)) 
 \in B_{\ell(j)}(X_A), j=1,2,\dots, p_i$ 
such that for each $j=1,2,\dots, p_i$,
\begin{gather*}
\omega^i_1(j) = i, \qquad 
\omega_{\ell(j)}^i(j) \in H, \qquad
 N_H(\omega^i(j)) = 1, \\
 U_{i} = \cup_{j=1}^{p_i} U_{\omega^i(j)},\qquad
  U_{\omega^i(j)}\cap  U_{\omega^i(j')} =\emptyset
  \text{ for } j\ne j'.
\end{gather*}
\end{lemma}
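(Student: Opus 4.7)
The plan is to realise $\{\omega^i(j)\}_{j=1}^{p_i}$ as the set of ``first-hit'' prefixes of points in $U_i$, i.e.\ the finite words starting with $i$ whose last symbol is the earliest occurrence of a symbol in $H$. The saturated hypothesis, used through Lemma \ref{lem:satu}, guarantees that such a first-hit time always exists and that the resulting set of prefixes is finite.

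More precisely, I would proceed as follows. First, by Lemma \ref{lem:satu} applied with $n=0$, the set $\{\nu \in B_*(X_A) \mid N_H(\nu)=0\}$ is finite. Let $L$ be an upper bound on the lengths of such words. Then, for any $x=(x_n)_{n\in\N} \in U_i$, the prefix $(x_1,\dots,x_{L+1})$ satisfies $N_H \ge 1$, so there is a minimal integer $\ell(x) \ge 1$ with $x_{\ell(x)} \in H$. Set $\omega(x) := (x_1,\dots,x_{\ell(x)})$; by construction $\omega_1(x)=i$, the last symbol lies in $H$, and no earlier symbol lies in $H$, so $N_H(\omega(x))=1$.

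Next, I would take $\{\omega^i(j)\}_{j=1}^{p_i}$ to be the set of distinct words $\omega(x)$ obtained as $x$ ranges over $U_i$. This set is finite because each $\omega^i(j)$ belongs to $\{\mu \in B_*(X_A) \mid N_H(\mu)=1\}$, which is finite by Lemma \ref{lem:satu} (ii). The equality $U_i = \cup_{j=1}^{p_i} U_{\omega^i(j)}$ is then immediate: the inclusion $\supset$ holds since each $\omega^i(j)$ begins with $i$, and $\subset$ holds because every $x \in U_i$ lies in $U_{\omega(x)}$.

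For disjointness, I would observe that two cylinders $U_{\omega^i(j)}$ and $U_{\omega^i(j')}$ meet only if one word is a prefix of the other; but if $\omega^i(j)$ were a proper prefix of $\omega^i(j')$, then $\omega^i(j')$ would contain the symbol $\omega^i_{\ell(j)}(j) \in H$ strictly before its own terminal symbol (also in $H$), forcing $N_H(\omega^i(j')) \ge 2$, a contradiction. Hence $j=j'$. There is no serious obstacle; the only nonroutine input is the saturation-based finiteness from Lemma \ref{lem:satu}, which both furnishes the bound $L$ and ensures that the prefix family is finite.
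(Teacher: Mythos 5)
Your construction is the same as the paper's: the family $\{\omega^i(j)\}_j$ is exactly the set of words starting at $i$, ending at the first occurrence of a symbol of $H$, with no earlier symbol in $H$, and saturation gives both the covering of $U_i$ and (via Lemma \ref{lem:satu}) the finiteness. You simply supply more detail than the paper on the finiteness and disjointness steps (note only that Lemma \ref{lem:satu} is stated for $n\in\N$, so you should either justify the $n=0$ case directly from saturation or bound the lengths using the $n=1$ case instead); the argument is correct.
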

\begin{proof}
If $i \in H$, then $p_i =1$ and take 
$\omega^i(1)$ as $i$.
If $i \not\in H$,
take the set of words $(\omega^i_1(j), \dots, \omega_{\ell(j)}^i(j))$
starting with  $\omega^i_1(j)=i$ 
and ending with $\omega_{\ell(j)}^i(j)$ in $H$
such that 
$\omega^i_q(j) \not\in H$
for $q=1,2,\dots,  \ell(j)-1$.
Since $H$ is saturated, the family satisfies
$U_{i} = \cup_{j=1}^{p_i} U_{\omega^i(j)}$.
\end{proof} 
\begin{lemma}\label{lem:AFHinclusion}
Assume that $H$ is saturated.
\begin{enumerate}
\renewcommand{\theenumi}{\roman{enumi}}
\renewcommand{\labelenumi}{\textup{(\theenumi)}}
\item $\F_{A,H}^n \subset \F_{A,H}^{n+1}, \, n \in \N$.
\item 
$1 \in \F_{A,H}^1$ and hence $1 \in \F_{A,H}^n$ for all $n \in \N$,
where
$1$ is the unit of the $C^*$-algebra $\OA$. 
\end{enumerate}
\end{lemma}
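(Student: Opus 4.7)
The plan is to exploit the Cuntz--Krieger relation $1=\sum_{j=1}^N S_jS_j^*$ together with Lemma \ref{lem:icode} in order to insert one additional symbol of $H$ into the ``future'' of any generator $S_\mu S_\nu^*\in\F_{A,H}^n$.

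For (i), I would start with a generator $S_\mu S_\nu^*$ satisfying $N_H(\mu)=N_H(\nu)=n$, and write
\begin{equation*}
S_\mu S_\nu^* = S_\mu \Bigl(\sum_{j=1}^N S_j S_j^*\Bigr) S_\nu^*
= \sum_{j} S_\mu S_j S_j^* S_\nu^*,
\end{equation*}
where the nonzero terms are exactly those with $A(\mu_{|\mu|},j)=A(\nu_{|\nu|},j)=1$. For each such $j$ I would apply Lemma \ref{lem:icode} to the vertex $j$, giving the orthogonal decomposition
$S_j S_j^* = \sum_{k=1}^{p_j} S_{\omega^j(k)}S_{\omega^j(k)}^*$ with $N_H(\omega^j(k))=1$ and $\omega^j(k)_1=j$. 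Substituting and using $S_\mu S_{\omega^j(k)}=S_{\mu\omega^j(k)}$ (and similarly on the $\nu$ side), I obtain
\begin{equation*}
S_\mu S_\nu^* = \sum_{j,k} S_{\mu\omega^j(k)}\, S_{\nu\omega^j(k)}^*,
\end{equation*}
and each summand has $N_H(\mu\omega^j(k))=N_H(\nu\omega^j(k))=n+1$. Hence $S_\mu S_\nu^*\in\F_{A,H}^{n+1}$, and (i) follows.

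For (ii), I apply the same device directly to the unit: writing $1=\sum_{j=1}^N S_jS_j^*$ and decomposing each $S_jS_j^*$ via Lemma \ref{lem:icode} gives
\begin{equation*}
1 = \sum_{j=1}^N\sum_{k=1}^{p_j} S_{\omega^j(k)}\, S_{\omega^j(k)}^*,
\end{equation*}
which is a sum of rank-one type projections $S_\mu S_\mu^*$ with $N_H(\mu)=1$, and therefore lies in $\F_{A,H}^1$. Combined with (i), iteration yields $1\in\F_{A,H}^n$ for every $n\in\N$. No real obstacle is expected; the only point requiring attention is tracking the admissibility of the concatenated words $\mu\omega^j(k)$ and $\nu\omega^j(k)$, which is automatic from $A(\mu_{|\mu|},j)=A(\nu_{|\nu|},j)=1$ and $\omega^j(k)_1=j$.
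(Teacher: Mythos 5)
Your proposal is correct and follows essentially the same route as the paper: both insert the resolution of the identity $1=\sum_{j}S_jS_j^*=\sum_{j,k}S_{\omega^j(k)}S_{\omega^j(k)}^*$ coming from Lemma \ref{lem:icode} between $S_\mu$ and $S_\nu^*$, and use $N_H(\mu\omega^j(k))=N_H(\mu)+1$ to land in $\F_{A,H}^{n+1}$. Your extra remark on admissibility of the concatenated words is a harmless refinement of what the paper leaves implicit.
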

\begin{proof}
Take the finite family $\{ \omega^i(j)\}_{j=1}^{p_i}$
of words as in Lemma \ref{lem:icode}, so that we have 
$$
 U_{i} = \cup_{j=1}^{p_i} U_{\omega^i(j)},\qquad
  U_{\omega^i(j)}\cap  U_{\omega^i(j')} =\emptyset
  \text{ for } j\ne j'.
$$
This means that
the equality
$S_iS_i^* = \sum_{j=1}^{p_i} S_{\omega^i(j)}S_{\omega^i(j)}^*
$ holds for $i=1,2,\dots,N$.
Hence we have
$$
1 =\sum_{i=1}^N S_iS_i^* = \sum_{i=1}^N \sum_{j=1}^{p_i} S_{\omega^i(j)}S_{\omega^i(j)}^*.
$$
Since $N_H(\omega^i(j) ) =1$, we have 
$S_{\omega^i(j)}S_{\omega^i(j)}^* \in \F_{A,H}^1$
so that $1 \in \F_{A,H}^1$.

For $\mu,\nu \in B_*(X_A)$ such that 
$N_H(\mu) = N_H(\nu) =n$, we have
$S_\mu S_\nu^* \in \F_{A,H}^n$ and
\begin{equation*}
S_\mu S_\nu^*
= \sum_{i=1}^N \sum_{j=1}^{p_i} S_\mu S_{\omega^i(j)}S_{\omega^i(j)}^* S_\nu^* 
=  \sum_{i=1}^N \sum_{j=1}^{p_i} S_{\mu \omega^i(j)} S_{\nu \omega^i(j)}^*.
\end{equation*}
As $N_H(\mu \omega^i(j)) = N_H(\mu) + N_H(\omega^i(j)) = n+1$
and similarly
$N_H(\nu \omega^i(j)) = n+1,$
we know that 
$S_\mu S_\nu^* \in \F_{A,H}^{n+1}$.
\end{proof}
Assume that $H$ is saturated in $\{1,2,\dots, N\}$.
Let us denote by $\Sigma_H^i$
the set $\{ \omega^i(j)\}_{j=1}^{p_i}$ in Lemma \ref{lem:icode}.
We set  
$$
\Sigma_H = \cup_{i=1}^N \Sigma_H^i =\cup_{i=1}^N \{ \omega^i(1), \dots, \omega^i(p_i)\}.
$$
Put $M = \sum_{i=1}^N p_i$ and denote by
$\{\omega(1), \dots,\omega(M)\}$
the set $\Sigma_H.$ 
Write each word $\omega(m)$ as
$$
\omega(m) = (\omega_1(m), \dots, \omega_{\ell(m)}(m)) \in B_{\ell(m)}(X_A), \qquad
m=1,2,\dots,M.
$$
We then have
for $m=1,2,\dots,M$
\begin{gather*}
\omega_{\ell(m)}(m) \in H, \qquad 
\omega_k(m) \not\in H \text{ for } k=1,2,\dots, \ell(m)-1,\\
X_A = \bigcup_{m=1}^M U_{\omega(m)}: \quad \text{ disjoint union}.
\end{gather*}
Define an $M \times M$ matrix $A_H$ with entries in $\{0,1\}$ 
by setting for $m,n \in \{1,2,\dots, M\}$
\begin{equation} \label{eq:matrixAH}
A_H(m,n)= A(\omega_{\ell(m)}(m), \omega_1(n))
=
\begin{cases}
1 & \text{ if } \omega(m)\omega(n) \in B_*(X_A),\\
0 & \text{ otherwise. }
\end{cases}
\end{equation}
We thus have the following proposition.
\begin{proposition}\label{prop:AHAF}
Assume that $H$ is saturated.
The $C^*$-algebra $\F_{A,H}$ is a unital AF-algebra defined 
by the inclusion matrix $A_H$. 
\end{proposition}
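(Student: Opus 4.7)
The plan is to construct a $*$-isomorphism from $\mathcal{F}_{A_H}$, the standard AF subalgebra of the Cuntz--Krieger algebra $\mathcal{O}_{A_H}$ associated with the matrix $A_H$, onto $\F_{A,H}$. Since $\mathcal{F}_{A_H}$ is by definition the AF algebra whose Bratteli diagram is encoded by the inclusion matrix $A_H$, producing such an isomorphism establishes both assertions of the proposition simultaneously. Before building the map, I note that $\F_{A,H}$ is already known to be a unital AF algebra on general grounds: by Proposition \ref{prop:generated} together with Lemma \ref{lem:4.3}, the algebra $\F_{A,H}$ equals the closure of $\bigcup_n \F_{A,H}^n$; each $\F_{A,H}^n$ is finite-dimensional by Lemma \ref{lem:satu}, and the preceding lemma gives $1 \in \F_{A,H}^n$ and $\F_{A,H}^n \subset \F_{A,H}^{n+1}$.

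Next, define $\pi : \mathcal{O}_{A_H} \to \OA$ on the canonical Cuntz--Krieger generators $T_1,\dots,T_M$ of $\mathcal{O}_{A_H}$ by $\pi(T_m) = S_{\omega(m)}$. Verifying the Cuntz--Krieger relations reduces to two identities already in hand: $\sum_m T_m T_m^* = 1$ maps to $\sum_m S_{\omega(m)} S_{\omega(m)}^* = 1$ by Lemma \ref{lem:AFHinclusion}(ii), while
\[
S_{\omega(m)}^* S_{\omega(m)} = S_{\omega_{\ell(m)}(m)}^* S_{\omega_{\ell(m)}(m)} = \sum_j A(\omega_{\ell(m)}(m), j) S_j S_j^*
\]
becomes $\sum_n A_H(m,n) S_{\omega(n)} S_{\omega(n)}^*$ after inserting the decomposition $S_j S_j^* = \sum_{n : \omega_1(n) = j} S_{\omega(n)} S_{\omega(n)}^*$ from Lemma \ref{lem:icode} and invoking the definition \eqref{eq:matrixAH} of $A_H$. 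Since $N_H(\omega(m)) = 1$, Lemma \ref{lem:rhoafsmu} gives $\rho^{A,H}_t(S_{\omega(m)}) = \exp(2\pi\sqrt{-1}\, t) S_{\omega(m)}$, matching the standard gauge action on $\mathcal{O}_{A_H}$; hence $\pi$ is gauge equivariant and restricts to a $*$-homomorphism $\mathcal{F}_{A_H} \to \F_{A,H}$.

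It then remains to show this restriction is bijective. Injectivity follows from the gauge-invariant uniqueness theorem for graph $C^*$-algebras: the projections $\pi(T_m T_m^*) = S_{\omega(m)} S_{\omega(m)}^*$ are pairwise orthogonal and nonzero (the $U_{\omega(m)}$ are nonempty and pairwise disjoint by Lemma \ref{lem:icode}), so $\pi$ is faithful on the diagonal, and gauge equivariance upgrades this to full injectivity. For surjectivity, every generator $S_\mu S_\nu^*$ of $\F_{A,H}$ with $N_H(\mu) = N_H(\nu) = n$ satisfies
\[
S_\mu S_\nu^* = \sum_m S_{\mu \omega(m)} S_{\nu \omega(m)}^*
\]
by inserting $1 = \sum_m S_{\omega(m)} S_{\omega(m)}^*$. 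Each word $\mu\omega(m)$, respectively $\nu\omega(m)$, has $N_H = n+1$ and ends in a symbol of $H$, so by the saturation of $H$ it decomposes uniquely as a concatenation of $n+1$ elements of $\Sigma_H$, exhibiting each summand as $\pi(T_\alpha T_\beta^*)$ for some $\alpha,\beta$ of length $n+1$. Combining this with the first paragraph yields $\pi(\mathcal{F}_{A_H}) = \F_{A,H}$ and hence the desired isomorphism.

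The most delicate step is the injectivity of $\pi$: the matrix $A_H$ need not be irreducible or aperiodic, so Cuntz--Krieger simplicity is not directly available. What rescues the argument is that $A_H$ has no sinks (each vertex of $H$ emits an edge in $A$ because $A$ is irreducible and $H\ne \emptyset$) and no sources by the same reasoning, which is precisely what the graph-algebra version of gauge-invariant uniqueness requires. Should a more self-contained route be preferred, an alternative is to directly exhibit a system of matrix units in $\F_{A,H}$ indexed by pairs of concatenations of $\Sigma_H$-elements with common terminal symbol, and match dimensions with $\mathcal{F}_{A_H}^n$ level by level, bypassing the uniqueness theorem entirely.
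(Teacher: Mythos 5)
Your argument is correct, but it identifies the inclusion matrix by a genuinely different mechanism than the paper does. The paper's proof is essentially your first paragraph plus one further observation: the decomposition $S_\mu S_\nu^* = \sum_{i,j} S_{\mu\omega^i(j)}S_{\nu\omega^i(j)}^*$ from the proof of Lemma \ref{lem:AFHinclusion} already exhibits the embedding $\F_{A,H}^n\subset\F_{A,H}^{n+1}$ as the one encoded by $A_H$ of \eqref{eq:matrixAH}, so the Bratteli diagram is read off directly from the filtration with no auxiliary algebra. You instead realize a Cuntz--Krieger $A_H$-family $T_m\mapsto S_{\omega(m)}$ inside $\OA$ and invoke the gauge-invariant uniqueness theorem to produce an isomorphism of $\F_{A,H}$ with the AF core $\mathcal{F}_{A_H}$ of $\mathcal{O}_{A_H}$; this is heavier machinery, but it buys a concrete identification $\F_{A,H}\cong\mathcal{F}_{A_H}$ (exactly what the paper asserts without proof in its Examples 1 and 2), and your surjectivity argument via the unique factorization of a word ending in $H$ into $N_H$-many blocks from $\Sigma_H$ is sound. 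One caveat: your parenthetical claim that $A_H$ has no sources is false in general. For instance, take the irreducible matrix on $\{1,2,3\}$ with $A(1,1)=A(1,2)=A(2,3)=A(3,1)=1$ and all other entries $0$, and $H=\{1\}$; then $\Sigma_H=\{(1),(2,3,1),(3,1)\}$, every word of $\Sigma_H$ ends in the symbol $1$, and $A(1,3)=0$, so the column of $A_H$ indexed by $(3,1)$ is zero. This does not damage your proof, because the gauge-invariant uniqueness theorem only needs the vertex projections $S_{\omega(m)}S_{\omega(m)}^*$ to be nonzero and $A_H$ to have no zero rows (which does follow from irreducibility of $A$ and the nonemptiness of each $\Sigma_H^j$); sources are harmless. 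But you should either drop that claim or replace it with the correct statement.
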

\begin{proof}
The subalgebras $\F_{A,H}^n, n \in \N$ are increasing sequence of 
finite dimensional $C^*$-algebras
with the common unit $1$ of $\OA$.
Since
$\F_{A,H} =\overline{\cup_{n=1}^\infty \F_{A,H}^n},$
one knows that 
$\F_{A,H} $ is a unital AF-algebra.
As in the proof of Lemma \ref{lem:AFHinclusion},
the inclusion $\F_{A,H}^n \subset \F_{A,H}^{n+1}$
is given by the matrix $A_H$.
\end{proof}

\begin{definition} \label{def:primitive}
 A saturated subset $H \subset \{1,2,\dots, N\}$
is said to be {\it primitive}\/ if
the $M \times M$ matrix $A_H$ is primitive, 
that is,  there exists
$K \in \N$ such that $A_H^K(m,n) \ge 1$ for all $m,n=1,\dots,M$.
\end{definition}
It is well-known that an AF-algebra defined by an inclusion matrix is simple if and only if 
the matrix is primitive.
We thus have the following theorem by Proposition \ref{prop:AHAF}.
\begin{theorem}\label{thm:support}
Assume that $H$ is saturated in $\{1,2,\dots, N\}$.
The $C^*$-algebra $\F_{A,H}$ is a unital AF-algebra defined 
by the inclusion matrix $A_H$.
Furthermore, if $H$ is primitive, 
the $C^*$-algebra $\F_{A,H}$ is simple.
\end{theorem}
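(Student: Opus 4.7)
The first assertion is precisely Proposition \ref{prop:AHAF}, which has already provided the increasing sequence $\F_{A,H}^n$ of finite-dimensional $C^*$-subalgebras with common unit $1$ together with the inclusion multiplicities encoded by $A_H$; nothing new is needed there. The real content is simplicity of $\F_{A,H}$ under the additional hypothesis that $A_H$ is primitive.

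My plan is to invoke the classical simplicity criterion for AF-algebras coming from a stationary Bratteli diagram: the closed two-sided ideals of $\F_{A,H}$ correspond bijectively to directed hereditary subsets of the stationary Bratteli diagram cut out by $A_H$, which may be encoded as sequences $(S_n)_{n \in \N}$ of subsets of $\{1,\dots,M\}$ satisfying the hereditary condition that $m \in S_{n+1}$ and $A_H(k,m) \ge 1$ imply $k \in S_n$, together with the dual saturation condition. Choose $K \in \N$ with $A_H^K(m,n) \ge 1$ for every $m,n$, as given by primitivity. If some $S_n$ is nonempty, then iterating the hereditary condition $K$ steps downward forces $S_{n-K} = \{1,\dots,M\}$, and iteration in the opposite direction then forces $S_m = \{1,\dots,M\}$ for every $m \ge n$ as well. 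Hence the only closed two-sided ideals of $\F_{A,H}$ are $\{0\}$ and $\F_{A,H}$, so $\F_{A,H}$ is simple.

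An alternative, groupoid-based approach would exploit Theorem \ref{thm:main1} together with Proposition \ref{prop:simplicity}: since $\F_{A,H} = C^*(G_{A,\chi_H})$, simplicity reduces to minimality of the function $\chi_H$ in the sense of Definition \ref{def:fminimal}, which in turn should be extractable from primitivity of $A_H$ by splicing bridging words between any $z \in X_A$ and a prescribed cylinder $U_\mu$ so as to match the $H$-counts $N_H(x_1,\dots,x_k) = N_H(z_1,\dots,z_l)$. The main bookkeeping step in either route is the explicit translation between reachability patterns in the $M \times M$ matrix $A_H$ and the combinatorics of $H$-returns in $B_*(X_A)$ encoded by the family $\Sigma_H$; I expect this to be the principal technical obstacle, after which the Perron--Frobenius character of the primitivity assumption closes the argument immediately.
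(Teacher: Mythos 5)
Your proposal is correct and follows essentially the same route as the paper: the AF structure is exactly Proposition \ref{prop:AHAF}, and simplicity is deduced from the standard fact that a stationary AF-algebra with primitive inclusion matrix is simple, which the paper simply cites as well-known and you unpack via Bratteli's ideal--hereditary-directed-subset correspondence. The only caveat is a minor bookkeeping slip in the direction of propagation (primitivity first forces $S_{n+K}=\{1,\dots,M\}$ from a nonempty $S_n$ via directedness, and the hereditary condition then pulls this back to earlier levels), which does not affect the validity of the argument.
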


\medskip

Let us present several examples of the support algebras $\F_{A,H}$. 

{\bf 1.}
$A =
\begin{bmatrix}
1 & 1 \\
1 & 0 
\end{bmatrix},  
$
$H = \{1\} \subset \{1,2\}.$
It is easy to see that $H$ is saturated.
Put $\alpha_1 = 1, \alpha_2 = 21$
so that 
$\Sigma_H = \{\alpha_1,\alpha_2\}$.
Then we have 
$A_H =
\begin{bmatrix}
1 & 1 \\
1 & 1 
\end{bmatrix}
$
and hence $H$ is primitive.
We may regard $X_{A_H} \subset \{\alpha_1,\alpha_2\}^\N$.
Let $S_1, S_2$ be the canonical generating partial isometries of 
$\OA$.
The we have 
\begin{equation*}
\F_{A,H} 
=  C^*( S_\xi S_\eta^* \mid \xi, \eta \in B_*(X_{A_H}); |\xi| = |\eta| ) 
=  \F_{A_H}. 
\end{equation*}
Hence the $C^*$-algebra $\F_{A,H}$ 
is isomorphic to the UHF algebra $ M_{2^\infty}$ of type $2^\infty$. 

\medskip

{\bf 2.}
$A =
\begin{bmatrix}
0& 1 & 1 \\
1& 0 & 1 \\
1& 1 & 0
\end{bmatrix},  
$
$H = \{1, 2\} \subset \{1,2,3\}.$
It is easy to se that $H$ is saturated.
Put $\alpha_1 = 1, \alpha_2 = 2, \alpha_3 = 31, \alpha_4 = 32$,
so that
$\Sigma_H = \{\alpha_1, \alpha_2, \alpha_3,\alpha_4\}$.
Then we have 
$A_H =
\begin{bmatrix}
0& 1& 1 & 1 \\
1& 0& 1 & 1 \\
0& 1& 1 & 1 \\
1& 0& 1 & 1 
\end{bmatrix}
$
and hence $H$ is primitive.
We may regard $X_{A_H} \subset \{\alpha_1, \alpha_2,\alpha_3, \alpha_4\}^\N$.
Let $S_1, S_2, S_3$ be the canonical generating partial isometries of $\OA$.
Then we have 
\begin{equation*}
\F_{A,H} 
=  C^*( S_\xi S_\eta^* \mid 
\xi, \eta \in B_*(X_{A_H}), |\xi| = |\eta| ) 
=  \F_{A_H} 
\end{equation*}
Hence the $C^*$-algebra $\F_{A,H}$ is isomorphicto the simple AF-algebra $\F_{A_H}$. 

\medskip



{\bf 3.}
%
$A =
\begin{bmatrix}
1 & 1 \\
1 & 1 
\end{bmatrix},  
$
that is denoted by $[2]$,
and
$H = \{ 1 \} \subset \{ 1,2\}$.
In the one-sided topological Markov shift
$(X_{[2]}, \sigma_{[2]})$,
$z=2^\infty = (2,2,2,\dots ) \in X_{[2]}$ is the fixed point, 
but the symbol $2$ does not belong to $H$,
so that $H$ is not saturated in $\{1,2\}.$ 
It is easy to see that 
$\{ z \}$ is a $G_H$-invariant subset, so that 
the point corresponds to a closed ideal of the $C^*$-algebra
$C^*(G_H)$.
It is a proper ideal of $C^*(G_H)$.    
Hence the $C^*$-algebra $\F_{[2], \{1\}}$ is not simple.

\subsection{Coboundary algebras}
In this subsection, we will study the second class of cocycle algebras
called coboundary algebras from a view point of \'etale groupoids.
\begin{lemma}
Suppose that $f = 1 - b + b\circ \sigma_A$ for some $b \in C(X_A,\Z)$.
For $(x, n, z) \in G_A$, we have
$(x, n,z) \in G_{A,f}$ if and only if
$n = b(x) - b(z)$.
\end{lemma}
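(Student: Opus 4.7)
The plan is to reduce the membership condition to a telescoping computation. The key observation is that when $f = 1 - b + b\circ\sigma_A$, the iterated cocycle $f^n$ collapses to an expression involving only the endpoints of the orbit segment.

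First, I would compute $f^n(x)$ explicitly. By the definition \eqref{eq:fnx} and the hypothesis on $f$,
\begin{equation*}
f^n(x) = \sum_{i=0}^{n-1} \bigl[1 - b(\sigma_A^i(x)) + b(\sigma_A^{i+1}(x))\bigr] = n - b(x) + b(\sigma_A^n(x)),
\end{equation*}
since the middle terms telescope. This identity is already used implicitly in the proof of Lemma \ref{lem:smusnufab} (where the equality $f^m = m - b + b\circ\sigma_A^m$ is recorded), so I would simply invoke it.

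Next I would handle both directions. Take $(x,n,z) \in G_A$ and choose $k,l \in \Zp$ with $n = k-l$ and $\sigma_A^k(x) = \sigma_A^l(z)$; such $k,l$ exist by the definition of $G_A$. Applying the telescoping identity,
\begin{equation*}
f^k(x) - f^l(z) = (k - b(x) + b(\sigma_A^k(x))) - (l - b(z) + b(\sigma_A^l(z))) = (k-l) - (b(x) - b(z)),
\end{equation*}
where the boundary terms involving $b(\sigma_A^k(x))$ and $b(\sigma_A^l(z))$ cancel because $\sigma_A^k(x) = \sigma_A^l(z)$. Hence $f^k(x) = f^l(z)$ if and only if $n = k-l = b(x) - b(z)$. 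This gives the ``only if'' direction, since $(x,n,z) \in G_{A,f}$ requires some witness pair $(k,l)$ with $f^k(x) = f^l(z)$.

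For the ``if'' direction, assume $n = b(x)-b(z)$. Since $(x,n,z)\in G_A$, pick any $k,l\in\Zp$ witnessing membership in $G_A$, namely $n = k-l$ and $\sigma_A^k(x) = \sigma_A^l(z)$. The same computation then forces $f^k(x) = f^l(z)$, so $(x,n,z)\in G_{A,f}$. The only real content is the telescoping step; no further obstacle is expected, as both directions use exactly the same identity, and the freedom of choice of $(k,l)$ is absorbed because the equality $k - b(x) = l - b(z)$ depends only on the difference $k-l$ once $\sigma_A^k(x)=\sigma_A^l(z)$ is granted.
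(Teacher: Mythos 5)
Your proposal is correct and follows essentially the same route as the paper: both directions rest on the telescoped identity $f^k(x) = k - b(x) + b(\sigma_A^k(x))$ and the cancellation of the boundary terms via $\sigma_A^k(x) = \sigma_A^l(z)$. The paper's proof is the same computation, merely written out for each direction separately.
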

\begin{proof}
Suppose that 
 $(x, n, z) \in G_{A,f}$.
Take $k,l \in \Zp$ such that 
$n = k-l, \, \sigma_A^k(x) = \sigma_A^l(z)$
and
$f^k(x) = f^l(z)$.
Since
$f^k(x) = k - b(x) + b(\sigma_A^k(x))$
and
$
 f^l(z) = l - b(z) + b(\sigma_A^l(z)),$
we have 
$k -l = b(x) - b(z)$.

Conversely,
suppose that 
$n = b(x) - b(z)$.
Since
 $(x, n, z) \in G_A,$
there exist $k,l \in \Zp$ such that 
$n = k-l, \, \sigma_A^k(x) = \sigma_A^l(z).$
As
$f^k(x) = k - b(x) + b(\sigma_A^k(x))$
and
$ f^l(z) = l - b(z) + b(\sigma_A^l(z)),$
the condition
$n = b(x) - b(z)$
implies that 
$f^k(x) = f^l(z)$
and hence
 $(x, n, z) \in G_{A,f}$.
\end{proof}
\begin{definition}
For $b \in C(X_A,\Z)$, define an  \'etale subgroupoid $G_A^b$ 
of $G_A$ by setting
\begin{align*}
G_A^b := \{ &
(x, n, z) \in X_A\times \Z \times X_A \mid  \text{there exist } k,l \in \Zp \text{ such that }\\
& n= k-l= b(x) - b(z), \, \sigma_A^k(x) = \sigma_A^l(z)  \}.
\end{align*}
\end{definition}
The unit space $(G_A^b)^{(0)}$ is defined by
$(G_A^b)^{(0)} =  \{(x,0,x) \in G_A^b \mid x \in X_A \}$.
The product and the inverse operation are inherited from $G_A$.
Since by putting
$1_b = 1 - b+ b\circ\sigma_A \in C(X_A,\Z)$,
 under the condition
$\sigma_A^k(x) = \sigma_A^l(z),$
we have
$1_b^k(x) = 1_b^l(z)$ if and only if 
$k-l = b(x) - b(z)$.
Hence we see 
\begin{lemma}\label{lem:3.18}
$G_A^b = G_{A, 1_b}$ the cocycle groupoid for $1_b$.
\end{lemma}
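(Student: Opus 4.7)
The plan is to observe that $G_A^b$ and $G_{A,1_b}$ have the same defining conditions once one computes $1_b^k$ via a telescoping sum. The preceding lemma (in the excerpt) already isolates the key equivalence, so the argument is essentially a one-line identification after that computation; in fact the text just before the statement of Lemma \ref{lem:3.18} sketches exactly what needs to be made precise.

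The first step is the telescoping calculation: for any $k \in \Zp$ and $x \in X_A$,
\begin{equation*}
1_b^k(x) = \sum_{i=0}^{k-1}\bigl(1 - b(\sigma_A^i(x)) + b(\sigma_A^{i+1}(x))\bigr) = k - b(x) + b(\sigma_A^k(x)),
\end{equation*}
using the definition of $1_b$ and the cancellation of interior $b$-terms. The same identity holds with $l$ and $z$ in place of $k$ and $x$. These are of course the formulas used in the preceding lemma, so I would simply recall them.

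Next, given $(x,n,z) \in G_A$ with $n = k - l$ and $\sigma_A^k(x) = \sigma_A^l(z)$, subtracting the two telescoped expressions yields
\begin{equation*}
1_b^k(x) - 1_b^l(z) = (k-l) - (b(x) - b(z)) = n - (b(x) - b(z)).
\end{equation*}
Hence the cocycle condition $1_b^k(x) = 1_b^l(z)$ defining membership in $G_{A,1_b}$ is equivalent to $n = b(x) - b(z)$, which is exactly the condition defining $G_A^b$. Combining the two inclusions gives the equality of the groupoids; the unit spaces, multiplication and inversion all agree because both groupoids inherit their structure from $G_A$.

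There is no real obstacle here: the only mild point to be careful about is that the condition $n = b(x) - b(z)$ in the definition of $G_A^b$ must be understood as demanding the existence of some $k, l \in \Zp$ with $n = k - l$ and $\sigma_A^k(x) = \sigma_A^l(z)$ (which lives inside $G_A$), not merely the numerical equality; but this is precisely what the preceding lemma records, so the identification is immediate.
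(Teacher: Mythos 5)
Your proof is correct and follows the same route as the paper: the telescoping identity $1_b^k(x) = k - b(x) + b(\sigma_A^k(x))$, which under $\sigma_A^k(x) = \sigma_A^l(z)$ turns the cocycle condition $1_b^k(x) = 1_b^l(z)$ into $k - l = b(x) - b(z)$, exactly matching the defining condition of $G_A^b$. This is precisely the observation the paper records in the sentence preceding the lemma (and in the lemma just before the definition of $G_A^b$), so nothing further is needed.
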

The cocycle groupoid $G_A^b$ is called the 
{\it coboundary groupoid for}\/ $b$.
By Proposition \ref{prop:essprinamenable}, Theorem \ref{thm:main1}
and Lemma \ref{lem:3.18}, 
we know that 
the coboundary groupoid $G_A^b$
is an essentially principal amenable  \'etale clopen subgroupoid of $G_A$.
\begin{lemma}
Assume that $A$ is primitive.
Then the function $1_b$ for each  $b \in C(X_A,\Z)$ is minimal.
Hence the coboundary groupoid $G_A^b$ is minimal.
\end{lemma}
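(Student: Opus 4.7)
The plan is to verify minimality of the function $1_b$ in the sense of Definition \ref{def:fminimal} directly, and then invoke Lemma \ref{lem:fminimal} together with Lemma \ref{lem:3.18} to deduce that $G_A^b = G_{A, 1_b}$ is minimal. Fix $z \in X_A$ and $\mu \in B_m(X_A)$. Using the telescoping identity $1_b^n(y) = n - b(y) + b(\sigma_A^n(y))$, the condition $\sigma_A^k(x) = \sigma_A^l(z)$ combined with $1_b^k(x) = 1_b^l(z)$ collapses to the single arithmetic equation $k - l = b(x) - b(z)$. Hence it suffices to produce $x \in U_\mu$ and $k, l \in \Zp$ satisfying $\sigma_A^k(x) = \sigma_A^l(z)$ and $k - l = b(x) - b(z)$.

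The key reduction is that $b$, being an integer-valued continuous function on the zero-dimensional compact space $X_A$, is locally constant: there is some $L \in \N$ such that $b(y)$ depends only on $(y_1, \ldots, y_L)$. Extend $\mu$ to an admissible word $\tilde\mu \in B_{m'}(X_A)$ with $m' \geq \max(m, L)$ (possible because any admissible word is the initial segment of some element of $X_A$). Then $b$ takes a constant value $c \in \Z$ on the cylinder $U_{\tilde\mu}$. By primitivity of $A$, fix $K \in \N$ with $A^n(i,j) \geq 1$ for every $n \geq K$ and every $i, j \in \{1,\ldots, N\}$.

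Now choose $j \in \N$ large enough that both $j \geq K$ and $m' + j - 1 \geq c - b(z)$ hold. Set $l := m' + j - 1 - c + b(z) \in \Zp$ and $k := m' + j - 1$. By primitivity, pick any admissible word $(\tilde\mu_{m'}, a_1, a_2, \ldots, a_{j-1}, z_{l+1})$ corresponding to an entry of $A^j(\tilde\mu_{m'}, z_{l+1}) \ge 1$, and define
\begin{equation*}
x := (\tilde\mu_1, \ldots, \tilde\mu_{m'}, a_1, \ldots, a_{j-1}, z_{l+1}, z_{l+2}, \ldots) \in X_A.
\end{equation*}
Then $x$ lies in $U_\mu$ (it extends $\mu$) and in $U_{\tilde\mu}$, so $b(x) = c$. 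A direct index computation gives $\sigma_A^k(x) = (z_{l+1}, z_{l+2}, \ldots) = \sigma_A^l(z)$, and by the definition of $l$ we have $k - l = c - b(z) = b(x) - b(z)$, as required.

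The most delicate point is coordinating the two lower bounds on $j$: one needs $j \geq K$ so that the required connecting path exists by primitivity, and one needs $j$ large enough so that the derived $l$ is nonnegative. Since $l$ grows linearly in $j$ while the primitivity threshold $K$ is fixed, both constraints can be met simultaneously for all sufficiently large $j$. Beyond this, the argument is purely combinatorial and uses nothing more than the explicit form of the cocycle $1_b$ together with the strong mixing provided by primitivity.
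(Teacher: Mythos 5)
Your proof is correct and follows essentially the same strategy as the paper's: reduce the minimality condition for $1_b$ to the single equation $k-l=b(x)-b(z)$, exploit local constancy of $b$ on a sufficiently long cylinder inside $U_\mu$, and use primitivity to splice a connecting path onto a tail of $z$. The only difference is that you absorb the paper's three-case analysis (according to the sign of $b(x)-b(z)$) into one uniform choice of a sufficiently long connecting path whose length supplies the needed slack, which is a clean streamlining rather than a different route.
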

\begin{proof}
Take arbitrary  $z \in X_A$ and $\mu=(\mu_1,\dots,\mu_m)\in B_m(X_A)$. 
Since $b \in C(X_A,\Z)$, one may find $K \in \N$ with $K\ge m$ 
such that  
$$
b = \sum_{\nu \in B_K(X_A)} b_\nu \chi_{U_\nu} \qquad 
\text{ for some } b_\nu \in \Z \text{ for }\nu \in B_K(X_A).
$$
Since $X_A = \cup_{\nu \in B_K(X_A)} U_\nu$,
one may find $\nu_z \in B_K(X_A)$ such that 
$z \in U_{\nu_z}$ so that
$b(z) = b_{\nu_z}$.
Since $|\mu| = m\le K$, 
there exists $\nu_0= (\nu_{01},\nu_{02}, \dots, \nu_{0K}) \in B_K(X_A)$
such that $U_{\nu_0} \subset U_\mu$.
We see that 
$b(x) = b_{\nu_0}$ for all $x \in U_{\nu_0}$.
 Now $A$ is primitive, so that 
there exists  $L \in \N$ such that $A^L(i,j)\ge 1$ 
for all $i,j=1,2,\dots,N$.

We have three cases.

Case 1, $b_{\nu_0} = b_{\nu_z}$: 
We have $b(x) = b(z).$ 
Since $A^L(\nu_{0K}, z_{K+L}) \ge 1,$
one finds a word  
$(\nu_{K+1}, \nu_{K+2},\dots,\nu_{K+L-1}) \in B_{L-1}(X_A)$
such that 
$A(\nu_{0K}, \nu_{K+1})= A(\nu_{K+L-1}, z_{K+L}) =1.$
Put
$$
x_{[1,K]} = \nu_0,\qquad 
x_{[K+1, K+L-1]} = (\nu_{K+1}, \dots,\nu_{K+L-1}), \qquad
x_{[K+L,\infty)} = z_{[K+L,\infty)}
$$ 
so that we have
$x =(x_n)_{n\in \N} \in U_{\nu_0} \subset X_A$
and
hence $b(x) = b_{\nu_0}$.
We then have
$$
\sigma_A^{K+L-1}(x) = \sigma_A^{K+L-1}(z) = z_{[K+L,\infty)}.
$$
By putting $k = l= K+L-1$,
we have
$x \in U_\mu$ and
$$
\sigma_A^k(x) = \sigma_A^l(z), \qquad k-l = b(x) - b(z) (=0). 
$$

Case 2, $b_{\nu_0} > b_{\nu_z}$: 
We put $M = b(x) - b(z) (= b_{\nu_0} - b_{\nu_z}) >0.$ 
Take 
$\nu' =(\nu'_1,\dots,\nu'_M) \in B_M(X_A)$
and 
$\nu^{\prime\prime} =(\nu^{\prime\prime}_{K+1},\dots,\nu^{\prime\prime}_{K+L-1}) \in B_{L-1}(X_A)$
 such that
$
A(\nu_{0K}, \nu'_1) 
=A(\nu'_M, \nu^{\prime\prime}_{K+1})
 = A(\nu^{\prime\prime}_{K+L-1}, z_{K+L})=1.
$
Put
\begin{gather*}
x_{[1,K]} = \nu_0,\qquad 
x_{[K+1, K+M]} = (\nu'_1, \dots,\nu'_M), \qquad
x_{[K+M+1, K+M+L-1]} = (\nu^{\prime\prime}_{K+1}, \dots,\nu^{\prime\prime}_{K+L-1}), \\
x_{[K+M+L,\infty)} = z_{[K+L,\infty)}
\end{gather*} 
so that we have
$x =(x_n)_{n\in \N} \in U_{\nu_0} \subset X_A$
and
$$
\sigma_A^{K+L+M-1}(x) 
= x_{[K+M+L,\infty)} 
= z_{[K+L,\infty)}
=\sigma_A^{K+L-1}(z).
$$
By putting $k = K+L + M -1$ 
and $l = K+L-1$,
we have
$x \in U_\mu$ and
$$
\sigma_A^k(x) = \sigma_A^l(z), \qquad k-l =M =  b(x) - b(z). 
$$

Case 3, $b_{\nu_0} < b_{\nu_z}$: 
We put $M' = b(z) - b(x) (= b_{\nu_z} -b_{\nu_0} ) >0.$ 
One may find $(\xi_1,\dots, \xi_{L-1}) \in B_{L-1}(X_A)$
such that 
$
A(\nu_{0K}, \xi_1) 
=A(\xi_{L-1}, z_{K+M'+L})=1.
$
Put
$$
x_{[1,K]} = \nu_0,\qquad 
x_{[K+1, K+L-1]} = (\xi_1, \dots,\xi_{L-1}), \qquad
x_{[K+L,\infty)} = z_{[K+M'+ L,\infty)}
$$ 
so that we have
$x =(x_n)_{n\in \N} \in U_{\nu_0} \subset X_A$
and
$$
\sigma_A^{K+L-1}(x) 
= x_{[K+L,\infty)} 
= z_{[K+M'+L,\infty)}
=\sigma_A^{K+M'+L-1}(z).
$$
By putting $k = K+L  -1$ 
and $l = K+M'+L-1$,
we have
$$
\sigma_A^k(x) = \sigma_A^l(z), \qquad k-l =-M' =  b(x) - b(z). 
$$
Therefore the function $1_b = 1 -b + b\circ\sigma_A$
is minimal. 
\end{proof}
\begin{definition}
For $b \in C(X_A,\Z)$, the {\it coboundary algebra}\/
  $\F_A^b$ is defined by the cocycle algebra 
  $\F_{A,1_b}$ for the function $1_b$.
\end{definition}
By Lemma \ref{lem:3.18},
$\F_A^b$ is isomorphic to the $C^*$-algebra $C^*(G_A^b)$ of the 
coboundary groupoid $G_A^b$.

\begin{proposition}
Assume that $A$ is primitive.
For any $b \in C(X_A,\Z)$, the coboundary algebra
$\F_A^b$ is simple.
\end{proposition}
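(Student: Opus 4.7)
The plan is to deduce the simplicity of $\FAb$ directly from two results already established in this subsection: the minimality lemma for $1_b$ just proved, together with the general simplicity criterion of Proposition \ref{prop:simplicity}.

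First I would observe that by the very definition of the coboundary algebra, $\FAb = \F_{A,1_b}$, so it suffices to prove that the cocycle algebra $\F_{A,1_b}$ is simple. The groupoid $G_{A,1_b}$ is essentially principal, amenable, and clopen in $G_A$ by Proposition \ref{prop:essprinamenable}, and under the isomorphism of Theorem \ref{thm:main1} one has $\F_{A,1_b}\cong C^*(G_{A,1_b})$. Consequently, by Proposition \ref{prop:simplicity}, simplicity of $\F_{A,1_b}$ is equivalent to the minimality of the function $1_b \in C(X_A,\Z)$ in the sense of Definition \ref{def:fminimal}.

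The latter has just been verified in the preceding lemma under the standing hypothesis that $A$ is primitive. Indeed, that lemma produces, for any prescribed $z \in X_A$ and any cylinder $U_\mu$, a point $x \in U_\mu$ together with $k,l \in \Zp$ satisfying $\sigma_A^k(x) = \sigma_A^l(z)$ and $k - l = b(x) - b(z)$, which by the computation
\[
1_b^n(y) = n - b(y) + b(\sigma_A^n(y)), \qquad y \in X_A,\ n \in \Zp,
\]
is exactly the condition $1_b^k(x) = 1_b^l(z)$ required by Definition \ref{def:fminimal}.

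There is essentially no obstacle remaining: all substantive work—the three-case combinatorial construction using primitivity of $A$ to calibrate $k-l$ against the prescribed integer difference $b(x)-b(z)$—has already been discharged in the preceding lemma. The present proposition therefore follows by a one-step composition of Proposition \ref{prop:simplicity} with that lemma, with no further calculation required.
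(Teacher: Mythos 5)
Your proposal is correct and follows exactly the paper's own argument: the paper likewise deduces simplicity of $\F_A^b$ by combining the preceding lemma (minimality of $1_b$, hence of the coboundary groupoid $G_A^b = G_{A,1_b}$) with the simplicity criterion of Proposition \ref{prop:simplicity}. Your extra verification that $k-l = b(x)-b(z)$ translates into $1_b^k(x)=1_b^l(z)$ is a harmless elaboration of what the paper already records in Lemma \ref{lem:3.18}.
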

\begin{proof}
By the previous lemma,
the coboundary groupoid $G_A^b$ is minimal,
so that the $C^*$algebra $C^*(G_A^b)$ that is $\F_A^b$
is simple.
\end{proof}
\begin{theorem}\label{thm:coboundary}
Assume that $A$ is primitive.
For any $b \in C(X_A,\Z)$, the coboundary algebra
$\F_A^b$ is a unital simple AF-algebra that is stably isomorphic to
the standard AF-algebra $\FA$.
\end{theorem}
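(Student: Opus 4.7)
The plan is to realize the generalized gauge action $\rho^{A,1_b}$ as an exterior equivalent perturbation of the standard gauge action $\rho^A$, and then to invoke Rieffel's Morita equivalence theorem for saturated circle actions. This will yield a stable isomorphism $\FAb \otimes \K \cong \FA \otimes \K$. The AF property and simplicity of $\FAb$ then follow with minimal extra work.

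The first step is to exhibit the exterior equivalence. Put $u_t := \exp(-2\pi\sqrt{-1}\, t\, b) \in \DA \subset \FA \subset \OA$; since $b$ is integer-valued, this is well-defined for $t \in \T = \R/\Z$. As $\rho^A$ fixes $\DA$ pointwise, $t \mapsto u_t$ is automatically a $\rho^A$-cocycle (in fact a group homomorphism $\T \to U(\FA)$). Using $1_b = 1 - b + b\circ\sigma_A$ together with the commutation relation $(b\circ\sigma_A)\, S_i = S_i\, b$, which follows from $S_i b = S_i b S_i^* S_i = (b\circ\sigma_A)\, S_i$ via the Cuntz--Krieger relations, a direct computation gives
\begin{equation*}
\rho^{A,1_b}_t(S_i) = \exp\!\bigl(2\pi\sqrt{-1}\, t\, (1 - b + b\circ\sigma_A)\bigr)\, S_i = u_t\, \rho^A_t(S_i)\, u_t^*,
\end{equation*}
so $\rho^{A,1_b}_t = \Ad(u_t) \circ \rho^A_t$ on all of $\OA$.

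Exterior equivalent circle actions have canonically isomorphic crossed products, hence $\OA \rtimes_{\rho^{A,1_b}} \T \cong \OA \rtimes_{\rho^A} \T$. The standard gauge action $\rho^A$ is saturated --- its spectral subspaces, generated by $S_\mu S_\nu^*$ with $|\mu|-|\nu|$ prescribed, span $\OA$ --- so by Rieffel's theorem it is Morita equivalent to $\FA$, which upgrades to $\OA \rtimes_{\rho^A} \T \cong \FA \otimes \K$ by the Brown--Green--Rieffel theorem. Saturation is readily transferred to $\rho^{A,1_b}$: localising $b$ on cylinders of a fixed length $K$ and using the identities $b\, S_\mu = b_\mu S_\mu$ and $S_\nu^* b = b_\nu S_\nu^*$ for $\mu, \nu$ long enough shows that the spectral subspaces of $\rho^{A,1_b}$ are spanned by $S_\mu S_\nu^*$ with $|\mu|-b_\mu - |\nu|+b_\nu$ prescribed, so the same Rieffel argument gives $\OA \rtimes_{\rho^{A,1_b}} \T \cong \FAb \otimes \K$. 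Combining, $\FAb \otimes \K \cong \FA \otimes \K$, proving stable isomorphism.

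The remaining claims are then immediate. As $\FA$ is a unital AF-algebra, $\FA \otimes \K \cong \FAb \otimes \K$ is AF, and so is the corner $\FAb \cong (1 \otimes e_{11})(\FAb \otimes \K)(1 \otimes e_{11})$, since hereditary subalgebras of AF-algebras are AF. Unitality holds as $1 \in \DA \subset \FAb$, and simplicity follows from Proposition \ref{prop:simplicity} applied to $1_b$, whose minimality under the primitivity of $A$ has just been proved in the preceding lemma. The main technical obstacle is verifying saturation of $\rho^{A,1_b}$; should one wish to bypass it, a purely groupoid-theoretic alternative uses that the cohomologous cocycles $c, c_b\colon G_A \to \Z$ produce equivariantly isomorphic skew-product groupoids $G_A \rtimes_c \Z \cong G_A \rtimes_{c_b} \Z$, from which the same stable isomorphism $C^*(G_A^{\AF}) \otimes \K \cong C^*(G_A^b) \otimes \K$ follows via Renault's duality between skew products and crossed products by characters of the cocycle group.
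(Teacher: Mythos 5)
Your proof follows essentially the same route as the paper's: the unitary cocycle $u_t=\exp(\mp 2\pi\sqrt{-1}\,t\,b)\in\DA$ exhibits $\rho^{A,1_b}$ as exterior equivalent to $\rho^A$, the crossed products $\OA\rtimes_{\rho^{A,1_b}}\T$ and $\OA\rtimes_{\rho^{A}}\T$ are therefore isomorphic, the latter is stably isomorphic to $\FA$ (Cuntz), and $\F_A^b$ is a full corner of the former, with simplicity coming from minimality of $1_b$. The only divergence is how fullness of that corner is certified: the paper deduces it from simplicity of the crossed product (using primitivity of $A$) together with Rosenberg's appendix, whereas you attempt a direct verification of Rieffel saturation via the spectral subspaces spanned by the $S_\mu S_\nu^*$ with $|\mu|-b_\mu-|\nu|+b_\nu$ prescribed --- the one step of your write-up that is only sketched, though under the standing primitivity hypothesis you could simply substitute the paper's simplicity argument for it.
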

\begin{proof}
The function $1_b = 1 - b + b\circ \sigma_A$
 is cohomologous to $1$, so that by putting a one-parameter unitary group
$u_t = \exp({2 \pi\sqrt{-1} b t}) \in \DA, t \in \T,$ 
we have by \cite[Lemma 2.3]{MaMZ}
\begin{equation*}
\rho^{A, 1_b}_t = \Ad(u_t) \circ \rho^A_t, \qquad t \in \T.
\end{equation*}
This means that the actions
$\rho^{A, 1_b}$ and $\rho^A$ are cocycle conjugate.
By general theory of $C^*$-crossed products, 
we know that their crossed products
$\OA\rtimes_{\rho^{A,1_b}}\T$ and
$\OA\rtimes_{\rho^{A}}\T$ 
are isomorphic.
By \cite{C2}, we know that 
$\OA\rtimes_{\rho^{A}}\T$ is stably isomorphic to $\FA$.
Now $A$ is primitive, so that the AF-algebra
$\FA$ is simple and hence 
so is $\OA\rtimes_{\rho^{A}}\T$.
Hence 
the crossed product
$\OA\rtimes_{\rho^{A,1_B}}\T$ is simple.  
By \cite{Rosenberg},  
there exists a projection $P_b \in  \OA\rtimes_{\rho^{A,1_b}}\T$
such that 
$(\OA)^{\rho^{A,1_b}} = P_b(\OA\rtimes_{\rho^{A,1_b}}\T)P_b.$
As 
the algebra $\F_A^b$ is the fixed point algebra
$(\OA)^{\rho^{A,1_b}}$ under the action
$\rho^{A,1_b}$,
it is a full corner of $\OA\rtimes_{\rho^{A,1_b}}\T$.
Hence 
$\F_A^b$ is stably isomorphic to $\OA\rtimes_{\rho^{A,1_b}}\T$
and hence  to the standard AF-algebra $\F_A$.
\end{proof}
In \cite{MaPacific}, the continuous full group
$\Gamma_A$, writtten as $[\sigma_A]_c$  in \cite{MaPacific}, 
for a topological Markov shift
$(X_A,\sigma_A)$ plays a crucial role to study continuous orbit equivalence 
in one-sided topological Markov shifts 
(see also \cite{MMGGD}, \cite{MatuiCrelle}, etc.).
The group $\Gamma_A$ consists of homeomorphisms $\tau$ on $X_A$ 
such that there exist $k_\tau, l_\tau \in C(X_A,\Zp)$ satisfying
$\sigma_A^{k_\tau(x)}(\tau(x)) = \sigma_A^{l_\tau(x)}(x)$ for $x \in X_A$.
The function $d_\tau = l_\tau -k_\tau \in C(X_A,\Z)$
is called the cocycle functin of $\tau$.
Since the homeomorphism $\tau$ 
gives rise to a continuous orbit equivalence on $X_A$,
one may consider the function $\Psi_\tau(1)\in C(X_A,\Z)$
 for $1$ by the formula \eqref{eq:Psihg}.
It is straightforward to see that 
the formula 
\begin{equation}\label{eq:Psitau1}
\Psi_\tau(1)(x) 
= 1 - d_\tau(x) + d_\tau\circ\sigma_A(x), \qquad x \in X_A
\end{equation}
holds (cf. \cite{MaPre2020c}).
We then know the following proposition. 
\begin{proposition}
Let $A$ be an irreducible, non permutation matrix with entries in $\{0,1\}$.
 The coboundary algebra $\F_A^{d_\tau}$ for the cocycle function $d_\tau= l_\tau -k_\tau$
of an element $\tau$ of the continuous full group 
$\Gamma_A$ is isomorphic to the standard AF-algebra
$\FA$.
\end{proposition}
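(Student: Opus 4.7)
The plan is to deduce the statement essentially as a direct application of Corollary \ref{cor:3.11}(ii), with the homeomorphism $h$ taken to be $\tau$ itself (so $B = A$) and the function $g$ taken to be the constant function $1 \in C(X_A,\Z)$.

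First I would observe that, since $\tau$ belongs to the continuous full group $\Gamma_A$, the homeomorphism $\tau : X_A \longrightarrow X_A$ automatically implements a continuous orbit equivalence of $(X_A,\sigma_A)$ with itself, so the hypotheses of Corollary \ref{cor:3.11} are satisfied in the special case $B = A$, $h = \tau$. Next I would invoke the identity \eqref{eq:Psitau1}, which gives
\begin{equation*}
\Psi_\tau(1)(x) = 1 - d_\tau(x) + d_\tau\circ\sigma_A(x) = 1_{d_\tau}(x), \qquad x \in X_A,
\end{equation*}
so that $\Psi_\tau(1) = 1_{d_\tau}$ as elements of $C(X_A,\Z)$. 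By the very definition of the coboundary algebra, this yields
$\F_A^{d_\tau} = \F_{A,1_{d_\tau}} = \F_{A,\Psi_\tau(1)}$.

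With this identification in hand, I would apply Corollary \ref{cor:3.11}(ii) to $g \equiv 1$ to obtain an isomorphism $\Phi : \OA \longrightarrow \OA$ of $C^*$-algebras such that $\Phi(\DA) = \DA$ and
\begin{equation*}
\Phi(\F_{A,\Psi_\tau(1)}) = \F_{A,1}.
\end{equation*}
Since $\F_{A,1} = \FA$ by the standing convention recorded just after the definition of cocycle algebras, this gives $\Phi(\F_A^{d_\tau}) = \FA$, which is the desired isomorphism.

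I do not anticipate a serious obstacle: the argument is essentially bookkeeping once one recognizes $1_{d_\tau}$ as $\Psi_\tau(1)$. The only point worth checking with some care is the identity \eqref{eq:Psitau1}, which the paper already flags as straightforward from the defining relations of $\Gamma_A$ and the continuous orbit equivalence cocycles. Note also that, in contrast to Theorem \ref{thm:coboundary}, this proposition does not require $A$ to be primitive and yields a genuine isomorphism rather than just a stable isomorphism, precisely because the conjugating automorphism $\Phi$ comes from the continuous orbit equivalence $\tau$ itself.
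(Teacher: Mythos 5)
Your proposal is correct and follows essentially the same route as the paper's own proof: apply Corollary \ref{cor:3.11}(ii) with $h=\tau$ and $g\equiv 1$, then use the identity \eqref{eq:Psitau1} to recognize $\F_{A,\Psi_\tau(1)}$ as the coboundary algebra $\F_A^{d_\tau}$ and conclude it is isomorphic to $\F_{A,1}=\FA$. Your closing remarks about not needing primitivity and obtaining a genuine (not merely stable) isomorphism are accurate and consistent with the paper.
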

\begin{proof}
Take an arbitrary $\tau \in \Gamma_A$.
By Corollary  \ref{cor:3.11} (ii),
there exists an isomorphism
$\Phi_\tau:\F_{A,\Psi_\tau(1)}\longrightarrow \F_{A,1}$ of $C^*$-algebras.
As $\Psi_\tau(1) = 1 - d_\tau + d_\tau\circ\sigma_A$ by \eqref{eq:Psitau1},
 we see that 
$\F_A^{d_\tau}$ is isomorphic to $\FA$.   
\end{proof}
There are lots of examples of homeomorphisms in the group $\Gamma_A$
as in \cite{MaDCDS}, \cite{MMGGD} and \cite{MatuiCrelle}.

\subsection{Suspension algebras}
In this subsection, we will study the third  class of cocycle algebras
called suspesion  algebras from a view point of suspension of two-sided subshifts.
 We fix an irreducible, non permutation matrix $A =A[i,j]_{i,j=1}^N$ with entries in $\{0,1\}$.
Let us denote by $(\bar{X}_A,\bar{\sigma}_A)$ 
the two-sided topological Markov shift
for  the matrix $A$
that is defined by the shift space 
$$
\bar{X}_A =\{ (x_n)_{n \in \Z} \in \{1,2,\dots, N\}^\Z \mid
A(x_n,x_{n+1}) =1 \text{ for all } n \in \Z\}
$$ 
and the shift homeomorphism 
$\bar{\sigma}_A((x_n)_{n \in \Z}) = (x_{n+1})_{n \in \Z} $
on $\bar{X}_A$.
Let $f:X_A\longrightarrow \N$ be a continuous function on $X_A$. 
By taking a higher block representation of $(X_A, \sigma_A$) (cf. \cite{LM}),
we may assume that the function $f$ depends only on the first coordinate 
on $X_A$. 
This means that there exists a finte family 
$f_1, \dots, f_N$ of positive integers
such that $f(x) = f_{x_1}$ for $x = (x_n)_{n\in \N}$.
Put 
$m_j = f_j -1 \in \Zp, j=1,\dots,N.$ 
Let $\bar{f}$ be the continuous function on $\bar{X}_A$ 
defined by 
$\bar{f}((x_n)_{n\in \Z}) =f( (x_n)_{n\in \N})$
for $(x_n)_{n\in \Z} \in \bar{X}_A$.

Let us denote by 
$(\bar{X}_A^f, S_A^f)$ the discrete suspension of $(\bar{X}_A, \bar{\sigma}_A)$
by the ceiling function $\bar{f}$, that is defined in the following way
(cf. \cite{BH}, \cite{PT}).
The space $\bar{X}_A^f$ is defined by
\begin{equation*}
\bar{X}_A^f = \{ (x, n) \in \bar{X}_A \times \Zp \mid  0\le n < \bar{f}(x) \},
\end{equation*}
and the transformation
$S_A^f$ on $\bar{X}_A^f$ is defined by for $(x,n) \in \bar{X}_A^f,$
\begin{equation*}
S_A^f(x,n) = 
\begin{cases}
(x, n+1) & \text{ if } n < \bar{f}(x) -1, \\
(\bar{\sigma}_A(x), 0) & \text{ if } n = \bar{f}(x) -1.
\end{cases}
\end{equation*}
Recall that  $\G_A = (V_A, E_A)$ 
denotes the directed graph defined by the matrix $A$
such that its vertex set $V_A =\{1,2,\dots, N\}$ and its edge set $E_A$
consists of the ordered pair $(i,j)$ of vertices such that $A(i,j) =1$. 
The source $s(i,j)$ of $(i,j)$ is $i$, and 
the terminal  $t(i,j)$ of $(i,j)$ is $j$. 
As in \cite{BH}, \cite{MaPre2018}, \cite{PT}, 
let us construct a new directed graph
$\G_{A_f} =(V_{A_f}, E_{A_f})$ with its transition matrix $A_f$
in the following way .
Let
$V_{A_f} = \cup_{j=1}^N \{j_0, j_1, \dots, j_{m_j}\}.$
For $j,k \in \{1,2,\dots,N\}$ with
$A(j,k) =1$, we define  
\begin{equation*}
A_f(j_0, j_1) = A_f(j_1, j_2) = \cdots = A_f(j_{m_j -1}, j_{m_j}) = 
A_f(j_{m_j}, k_0) = 1.
\end{equation*}
We define
$A_f(j_m, k_n) = 0$ for other pair 
$(j_m, k_n) \in V_{A_f} \times V_{A_f}$. 
We call the matrix $A_f$ the {\it suspended matrix}\/ of $A$ by $f$.

Denote by $\bar{X}_{A_f}$ the two-sided shift space defined by the suspended 
matrix $A_f$.  
Let $\Sigma_{A_f} =V_{A_f}$.
Hence we have
\begin{equation*}
\bar{X}_{A_f} =
\{ (x^f_i)_{i\in \Z} \in \Sigma_{A_f}^\Z \mid
x^f_i \in \Sigma_{A_f}, A_f(x^f_i, x^f_{i+1}) = 1\text{ for all } i \in \Z \}.
\end{equation*}
As $x^f_i = j(i)_{n(i)}$ for some $j(i) \in \{1,2,\dots, N\}$ and 
$n(i) \in \{0,1,\dots,m_{j(i)} \}$.
Let $x_1 = j(1)$.
Let $x_2$ be the first return time of $j(1)_{n(1)}$ to the $j_0$ in $(x^f_i)_{i\in \Z}$.
Similarly we have a sequence $(x_k)_{k\in \Z} \in \bar{X}_A$
of the first return time in  both forward and backward in $(x^f_i)_{i\in \Z}$.
We then have a correspondence
\begin{equation*}
\eta : \bar{X}_{A_f} \longrightarrow \bar{X}_A^f
\end{equation*}
such that 
$
\eta((x^f_i)_{i\in \Z}) = ((x_k)_{k\in \Z}, n(1)) \in \bar{X}_A^f.$
The following lemma is well-known.
\begin{lemma}[{cf. \cite{BH}, \cite{MaPre2018}, \cite{PS}, \cite{PT}}] 
For $f \in C(X_A,\N)$,
the map $\eta : \bar{X}_{A_f} \longrightarrow \bar{X}_A^f$
is a homeomorphism
satisfying $\eta\circ \bar{\sigma}_A = S_A^f \circ \eta$.
\end{lemma}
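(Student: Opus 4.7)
The plan is to establish the lemma by constructing the inverse $\eta^{-1}\colon \bar{X}_A^f \longrightarrow \bar{X}_{A_f}$ explicitly, verifying that both $\eta$ and $\eta^{-1}$ are continuous (so that $\eta$ is a homeomorphism, since both spaces are compact Hausdorff), and then checking the intertwining relation $\eta\circ \bar{\sigma}_{A_f} = S_A^f\circ \eta$ by unwinding the definitions. (The statement writes $\bar{\sigma}_A$, which must be read as $\bar{\sigma}_{A_f}$, the shift on the suspended sequence space, since the domain of $\eta$ is $\bar{X}_{A_f}$.)

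The key structural observation is that any point $\tilde{x} = (x^f_i)_{i\in\Z} \in \bar{X}_{A_f}$ decomposes canonically into consecutive runs through the towers above $\G_A$. Indeed, by construction of $A_f$, the only outgoing edge from $j_n$ for $0 \le n < m_j$ is to $j_{n+1}$, and from $j_{m_j}$ only to some $k_0$ with $A(j,k)=1$; dually for incoming edges. Writing $x^f_i = j(i)_{n(i)}$, the index $n(i)$ therefore increments by $1$ at each step until it reaches $m_{j(i)}$, at which point $j$ advances and $n$ resets to $0$. The bottom-of-tower indices $\{\,i\in\Z : n(i) = 0\,\}$ form a strictly increasing bi-infinite sequence $\cdots < i_{-1} < i_0 \le 0 < i_1 < \cdots$, chosen so that $i_0$ is the largest such index with $i_0 \le 0$, and they satisfy $i_{k+1}-i_k = f_{j(i_k)}$. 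Setting $x_k := j(i_k)$ yields $(x_k)_{k\in\Z} \in \bar{X}_A$ since $A(x_k, x_{k+1}) = A(j(i_k), j(i_{k+1})) = 1$, and setting $n := -i_0 \in \{0,1,\dots,f(x_0)-1\}$ records the height of index $0$ within its current tower. This defines $\eta$ unambiguously, and reading the construction backwards gives $\eta^{-1}$: given $((x_k)_{k\in\Z}, n)\in \bar{X}_A^f$, one lays down the tower $(x_k)_0, (x_k)_1, \dots, (x_k)_{m_{x_k}}$ end-to-end in both directions and positions the bi-infinite sequence so that $(x_0)_n$ sits at index $0$.

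Continuity is then immediate: the tower heights $f_j$ are uniformly bounded by $F:=\max_{1\le j\le N} f_j$, so any finite window of $\eta(\tilde{x})$ in the base coordinates, together with the height $n$, is determined by the finite window of $\tilde{x}$ of radius at most $F$ times the original radius around index $0$; symmetrically for $\eta^{-1}$. For the intertwining relation, applying $\bar{\sigma}_{A_f}$ to $\tilde{x}$ decreases all indices by $1$, so the new height at position $0$ is $n+1$ if $n+1 < f(x_0)$ (remaining in the same tower) and $0$ otherwise, with the base sequence simultaneously shifting forward by one index in the latter case. This is exactly the action of $S_A^f$ as defined.

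The main obstacle — entirely combinatorial bookkeeping — is fixing the convention for the basepoint $i_0$ so that the output height $n$ lands in the correct fundamental domain $[0, f(x_0))$ rather than some shifted interval, and then confirming that the two edge cases of the definition of $S_A^f$ correspond precisely to the two edge cases in the recomputation of $i_0$ after shifting. Once this convention is fixed, each remaining step reduces to pattern-matching against the description of the edges of $\G_{A_f}$.
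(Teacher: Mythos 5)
Your proof is correct and follows exactly the standard tower/first-return-time decomposition that the paper itself uses to define $\eta$ (the paper offers no proof, only the construction and a citation, so your argument supplies precisely the "well-known" verification: bounded tower heights give the bi-infinite set of bottom indices, hence continuity of $\eta$ and $\eta^{-1}$, and the two cases of $S_A^f$ match the two cases in relocating the basepoint after shifting). Your reading of $\bar{\sigma}_A$ as $\bar{\sigma}_{A_f}$ in the intertwining relation is also the correct interpretation of the statement.
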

Hence the discrete suspension 
$(\bar{X}^f_A, S_A^f)$ is identified with the 
two-sided topological Markov shift
$(\bar{X}_{A_f}, \bar{\sigma}_{A_f})$
defined by the suspended matrix $A_f$. 

We will next study a crucial relation between these two $C^*$-algebras
$\F_{A_f}$ and $\FAf$.
Let us denote by 
$\tS_{j_0}, \tS_{j_1}, \dots, \tS_{j_{m_j}}, j=1,2,\dots,N$ 
the canonical generating partial isometries of the Cuntz--Krieger algebra
$\OAf$ so that 
\begin{gather*}
\sum_{j=1}^N (\tS_{j_0}\tS_{j_0}^* +\tS_{j_1}\tS_{j_1}^* +\cdots +\tS_{j_{m_j}}\tS_{j_{m_j}}^*) =1, \\
\tS_{j_n}^*\tS_{j_n} =
{\begin{cases}
 \tS_{j_{n+1}}\tS_{j_{n+1}}^* & \text{ for } n=0,1,\dots,m_j-1, \\
 \sum_{k=1}^N A(j,k) \tS_{k_0}\tS_{k_0}^* & \text{ for } n = m_j. 
\end{cases} 
}
\end{gather*}
Let us denote by 
$\rho^{A_f}_t, t\in \T$ the standard gauge action on
 $\OAf$ that is defined by
$$
\rho^{A_f}_t(\tS_{j_n}) =\exp{(2\pi\sqrt{-1}t)} \tS_{j_n}, 
\qquad n=0,1,\dots, m_j, \, j=1,2,\dots, N. 
$$
Let $\F_{A_f}$ be the standard AF-algebra inside $\OAf$
that is realized as the fixed point algebra of $\OAf$ 
under the gauge action $\rho^{A_f}$.
Define partial isometries $s_1,\dots, s_N$
and projection $P_A$ by
\begin{equation*}
s_j:= \tS_{j_0}\tS_{j_1} \cdots \tS_{j_{m_j}}, \qquad
P_A: = \sum_{j=1}^N \tS_{j_0}\tS_{j_0}^*
\end{equation*}
in $\OAf$. 
The following lemma is similarly shown to 
\cite[Section 6]{MaPre2018}.
\begin{lemma}\label{lem:fullOA}
Keep the above notation.
\begin{enumerate}
\renewcommand{\theenumi}{\roman{enumi}}
\renewcommand{\labelenumi}{\textup{(\theenumi)}}
\item
$\sum_{j=1}^N s_j s_j^* =P_A, \, s_i^* s_i = \sum_{k=1}^N A(j,k)s_ks_k^*, i=1,2,\dots,N$
\item $P_A\OAf P_A = C^*(s_1,\dots, s_N)$ the $C^*$-subalgebra of $\OAf$
generated by $s_1,\dots, s_N$. 
Hence  $P_A \OAf P_A$ is isomorphic to $\OA$.
\item $\rho^{A_f}_t(P_A) = P_A, \, t \in \T.$
\item Let $\Phi_A:P_A\OAf P_A\longrightarrow\OA=C^*(s_1,\dots, s_N)$
be the identification
$\Phi_A(\tS_{j_0}\tS_{j_1} \cdots \tS_{j_{m_j}}) = s_j$
in (ii)
 between $P_A\OAf P_A$ and $\OA$.
Then we have 
$\Phi_A\circ \rho^{A_f}_t|_{P_A\OAf P_A} = \rho^{A,f}_t\circ \Phi_A, \, t \in \T$.    
\end{enumerate}
\end{lemma}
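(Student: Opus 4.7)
The plan is to dispatch (i), (iii), (iv) by direct calculations on the generators and to reserve the substantive work for (ii). For (i) I telescope using the defining relation $\tS_{j_n}^* \tS_{j_n} = \tS_{j_{n+1}} \tS_{j_{n+1}}^*$ valid for $0 \le n < m_j$: by induction on $n$, the equality $\tS_{j_n}^* \cdots \tS_{j_0}^* \tS_{j_0} \cdots \tS_{j_n} = \tS_{j_{n+1}} \tS_{j_{n+1}}^*$ holds for $n < m_j$, so at $n = m_j$ the telescoping terminates with $s_j^* s_j = \tS_{j_{m_j}}^* \tS_{j_{m_j}} = \sum_k A(j,k) \tS_{k_0} \tS_{k_0}^*$. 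Dually, collapsing from the opposite end (using $\tS_\alpha \tS_\alpha^* \tS_\alpha = \tS_\alpha$) yields $s_j s_j^* = \tS_{j_0} \tS_{j_0}^*$. Summing over $j$ gives $\sum_j s_j s_j^* = P_A$ and $s_i^* s_i = \sum_k A(i,k) s_k s_k^*$. Part (iii) is then immediate since each $\tS_{j_0} \tS_{j_0}^*$ is gauge-fixed. For (iv), one computes $\rho_t^{A_f}(s_j) = \exp(2 \pi \sqrt{-1} (m_j + 1) t) s_j = \exp(2 \pi \sqrt{-1} f_j t) s_j$; identifying $f$ with $\sum_j f_j S_j S_j^* \in \DA$ (permitted because $f$ depends only on the first coordinate), Lemma \ref{lem:rhoafsmu} applied to $\mu = (j)$ gives $\rho_t^{A,f}(S_j) = \exp(2 \pi \sqrt{-1} f_j t) S_j$. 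Since $\Phi_A(s_j) = S_j$ from (ii), the actions agree on generators and hence on all of $P_A \OAf P_A$.

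Part (ii) carries the main substance. The relations verified in (i) show that $\{s_1, \dots, s_N\}$ is a Cuntz--Krieger family for $A$ with common unit $P_A$, so the universal property of $\OA$ yields a surjective $*$-homomorphism $\OA \to C^*(s_1, \dots, s_N)$ sending $S_j$ to $s_j$; as $A$ is irreducible and non permutation, $\OA$ is simple, and this map is an isomorphism. The inclusion $C^*(s_1, \dots, s_N) \subseteq P_A \OAf P_A$ follows from $s_j s_j^* \le P_A$ and $s_j^* s_j \le P_A$, which give $P_A s_j = s_j P_A = s_j$ via the partial isometry identity $s_j = s_j s_j^* s_j$.

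The reverse inclusion $P_A \OAf P_A \subseteq C^*(s_1, \dots, s_N)$ is the main obstacle. My plan is to analyze the compressions $P_A \tS_\xi \tS_\eta^* P_A$ for $\xi, \eta \in B_*(X_{A_f})$, whose linear span is dense in $P_A \OAf P_A$. Since $P_A = \sum_j \tS_{j_0} \tS_{j_0}^*$, such a compression vanishes unless $\xi$ begins at some $j^{(1)}_0$ and $\eta$ begins at some $k^{(1)}_0$. For the nonzero case I will invoke the identity $\tS_\xi = \tS_{\xi \tau} \tS_\tau^*$ obtained by iterating $\tS_{j_p}^* \tS_{j_p} = \tS_{j_{p+1}} \tS_{j_{p+1}}^*$, which completes any truncated final segment $\tau$ of $\xi$ up to a terminal vertex $l_{m_l}$. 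Since $\tS_\xi \tS_\eta^* \ne 0$ forces $\xi, \eta$ to end at the same vertex in $V_{A_f}$, the same $\tau$ applies to both, giving $\tS_\xi \tS_\eta^* = \tS_{\xi \tau} \tS_{\eta \tau}^*$, and one reduces to the case where each of $\xi, \eta$ ends at some $l_{m_l}$. The structure of $\G_{A_f}$ then forces such a word to be a concatenation of complete vertex loops $j^{(1)}_0 j^{(1)}_1 \cdots j^{(1)}_{m_{j^{(1)}}} j^{(2)}_0 \cdots j^{(p)}_{m_{j^{(p)}}}$ with $j^{(1)} \cdots j^{(p)} \in B_p(X_A)$, so that $\tS_\xi = s_{j^{(1)}} \cdots s_{j^{(p)}}$ and similarly for $\tS_\eta$. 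Hence $P_A \tS_\xi \tS_\eta^* P_A = s_\mu s_\nu^*$ for admissible $\mu, \nu \in B_*(X_A)$, which lies in $C^*(s_1, \dots, s_N)$. The combinatorial bookkeeping for the loop-completion step, and verifying that end-matching is preserved, is the chief technical difficulty.
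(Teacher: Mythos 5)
Your proof is correct, and it supplies an argument where the paper gives none: the paper simply asserts that the lemma ``is similarly shown to'' Section~6 of the cited preprint \cite{MaPre2018}, so there is no in-text proof to compare against. Your strategy --- telescoping the Cuntz--Krieger relations for (i), reading off gauge-invariance of $\tS_{j_0}\tS_{j_0}^*$ for (iii), matching $\exp(2\pi\sqrt{-1}(m_j+1)t)=\exp(2\pi\sqrt{-1}f_jt)$ on generators for (iv), and proving the corner identity in (ii) by completing truncated loops and invoking the uniqueness/simplicity of $\OA$ --- is exactly the standard route for this kind of statement and is surely what the cited reference does. One small imprecision in (ii): $\tS_\xi\tS_\eta^*\ne 0$ does \emph{not} force $\xi$ and $\eta$ to end at the same vertex of $V_{A_f}$; if both end at top-level vertices $l_{m_l}$ and $l'_{m_{l'}}$ with $l\ne l'$, the product can still be nonzero provided $l$ and $l'$ have a common follower in $\G_A$. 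This does not damage the argument, because the loop-completion step is only needed when a word ends at some $l_n$ with $n<m_l$, and in that case the source projection $\tS_{l_n}^*\tS_{l_n}=\tS_{l_{n+1}}\tS_{l_{n+1}}^*$ \emph{does} force the other word to end at the same $l_n$, so a single $\tau$ serves both; in the top-level case no completion is required and the block decomposition applies directly. With that caveat recorded, the bookkeeping you defer (including the degenerate case of empty words, which reduces to the same form after multiplying by $P_A=\sum_j\tS_{j_0}\tS_{j_0}^*$) goes through.
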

Let us denote by $\K$ 
the $C^*$-algebra of compact operators on the separable 
infinite dimensional Hilbert space $\ell^2(\N)$,
and $\C$ its maximal abelian $C^*$-subalgebra of $\K$ consisting of
diagonal operators on $\ell^2(\N)$.  
By the above lemma, we see the following proposition.
\begin{proposition}\label{prop:fullFA}
\begin{enumerate}
\renewcommand{\theenumi}{\roman{enumi}}
\renewcommand{\labelenumi}{\textup{(\theenumi)}}
\item $P_A\F_{A_f} P_A = \FAf$.
\item Assume that $A$ is primitive.
Then $P_A$ is a full projection in $\F_{A_f}$,
 so that
the cocycle algebra  
$\FAf$ is stably isomorphic to $\F_{A_f}$.
More exactly, there exists an isomorphism
\begin{equation*} 
\Phi_f: \F_{A_f}\otimes\K \longrightarrow \FAf\otimes\K
\end{equation*}
of $C^*$-algebras such that 
$\Phi_f(\mathcal{D}_{A_f}\otimes\C) = \mathcal{D}_A\otimes\C.
$  
\end{enumerate}
\end{proposition}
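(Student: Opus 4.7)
\smallskip

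\noindent\textbf{Proof proposal.}
The plan for (i) is a direct application of Lemma \ref{lem:fullOA}. By part (iii) of that lemma the projection $P_A$ is fixed by $\rho^{A_f}_t$, so the restriction of $\rho^{A_f}$ to the corner $P_A\OAf P_A$ is a well-defined $\T$-action, and by part (iv) the identification $\Phi_A : P_A\OAf P_A \to \OA$ intertwines this restriction with the generalized gauge action $\rho^{A,f}$. Taking fixed-point algebras on both sides, the left-hand side is $(P_A\OAf P_A)^{\rho^{A_f}}=P_A\OAf^{\rho^{A_f}}P_A=P_A\F_{A_f}P_A$ (the compression of $\OAf$ to $P_A$ commutes with the averaging projection defining $\F_{A_f}$ since $P_A$ is fixed), while the right-hand side is $\OA^{\rho^{A,f}}=\FAf$. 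Hence $\Phi_A$ identifies $P_A\F_{A_f}P_A$ with $\FAf$.

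For (ii), first I verify that the suspended matrix $A_f$ is primitive whenever $A$ is. Indeed, in the graph $\G_{A_f}$ every vertex $j_n$ is connected by a unique forward path of length $m_j-n+1$ to some $k_0$, and then every edge $(j,k)\in E_A$ is realized as an edge $(j_{m_j},k_0)$; so if $A^K$ has all positive entries then powers of $A_f$ of size on the order of $K+\max_j m_j$ have all positive entries between vertices of type $j_0$, and an analogous (slightly larger) power handles all pairs in $V_{A_f}$. With $A_f$ primitive, the AF-algebra $\F_{A_f}$ is simple, so every nonzero projection in it is full; in particular $P_A$ is a full projection of $\F_{A_f}$. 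Brown's stabilization theorem (applied to the unital, hence $\sigma$-unital, simple $C^*$-algebra $\F_{A_f}$ and the full projection $P_A$) then yields an isomorphism $\F_{A_f}\otimes\K \cong (P_A\F_{A_f}P_A)\otimes\K = \FAf\otimes\K$.

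The remaining issue, which I expect to be the main technical point, is realizing Brown's isomorphism so that the Cartan/diagonal subalgebra $\mathcal{D}_{A_f}\otimes\C$ is mapped onto $\mathcal{D}_A\otimes\C$. The plan is to construct the stabilizing system of matrix units inside the diagonal. Concretely, since $A_f$ is primitive the AF-algebra $\F_{A_f}$ has a generating Bratteli diagram in which $P_A=\sum_j \tS_{j_0}\tS_{j_0}^*$ is a sum of finitely many of the rank-one projections at some level. Because all such rank-one projections at a given level are Murray--von Neumann equivalent inside $\F_{A_f}$, one can choose, level by level, partial isometries $v_n\in \F_{A_f}$ with $v_n^*v_n\le P_A$ and $\sum_n v_nv_n^*=1_{\F_{A_f}}$, having initial projections inside the diagonal $\mathcal{D}_{A_f}$. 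Using the $v_n$ as a system of matrix units produces the desired isomorphism $\Phi_f$, and because each $v_n$ can be chosen to be a product of generating partial isometries $\tS_{\mu}\tS_{\nu}^*$ of $\OAf$ of a type preserved by the canonical diagonal, $\Phi_f$ carries $\mathcal{D}_{A_f}\otimes\C$ onto $\mathcal{D}_A\otimes\C$. The main obstacle is thus a careful bookkeeping argument showing that the matrix-unit construction underlying the Brown isomorphism can be carried out inside the normalizer of the diagonal; once this is set up, the statement follows.
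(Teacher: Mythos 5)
Your proof of (i) is exactly the paper's: restrict $\rho^{A_f}$ to the corner $P_A\OAf P_A$ using $\rho^{A_f}_t(P_A)=P_A$, intertwine with $\rho^{A,f}$ via $\Phi_A$ from Lemma \ref{lem:fullOA}, and take fixed-point algebras. For (ii) the skeleton also coincides (primitivity of $A_f$, hence simplicity of $\F_{A_f}$, hence fullness of $P_A$, hence a Brown-type stable isomorphism), and your explicit verification that $A_f$ inherits primitivity from $A$ is a welcome addition, since the paper merely asserts it. The one place where you genuinely diverge is the diagonal-preserving refinement: you propose to re-prove a diagonal-compatible version of Brown's theorem by building the stabilizing matrix units inside the normalizer of $\mathcal{D}_{A_f}$, and you correctly flag this bookkeeping as the main unproved obstacle. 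The paper avoids this entirely by observing that $P_A\mathcal{D}_{A_f}P_A=\DA$, so that $(\FAf,\DA)$ is a \emph{relative full corner} of $(\F_{A_f},\mathcal{D}_{A_f})$ in the sense of its reference \cite{MaTAMS2018}; the relative Morita equivalence machinery there then delivers the isomorphism $\Phi_f$ with $\Phi_f(\mathcal{D}_{A_f}\otimes\C)=\DA\otimes\C$ as a citable black box. Your hands-on construction is essentially what underlies that cited result, so your route is viable, but as written it is a plan rather than a proof; the quickest way to close it is to verify the single identity $P_A\mathcal{D}_{A_f}P_A=\DA$ and invoke the relative-full-corner theorem, which is precisely what the paper does.
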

\begin{proof}
(i)
Let $S_1, \dots, S_N$ be the canonical generating
partial isometries of $\OA$. 
Since the correspondence 
$\Phi_A: P_A \OAf P_A\longrightarrow \OA$
defined by $\Phi_A(\tS_{j_0}\tS_{j_1}\cdots \tS_{j_{m_j}}) = S_j$
is an isomorphism satisfying 
$\Phi_A\circ \rho^{A_f}_t = \rho^{A,f}_t\circ \Phi_A$,
we see that 
$(P_A \OAf P_A)^{\rho^{A_f}} = (\OA)^{\rho^{A,f}}$.
As 
$\rho^{A_f}_t(P_A) = P_A$,
we know that 
$P_A (\OAf)^{\rho^{A_f}} P_A= (\OA)^{\rho^{A,f}}$
and hence 
$P_A\F_{A_f} P_A = \FAf$.

(ii)
Now the matrix $A$ is primitive, 
so is the suspended matrix $A_f$.
Hence the AF-algebra $\F_{A_f}$ is simple.
As any nonzero projection of a simple $C^*$-algebra is full,
the projection $P_A$ is a full projection in $\F_{A_f}$.
It is easy to see that 
$P_A {\mathcal{D}}_{A_f} P_A = \DA$, so that 
the pair $(\F_{A,f}, \DA)$ is a relative full corner 
of the pair $(\F_{A_f}, \mathcal{D}_{A_f})$
in the sense of \cite{MaTAMS2018}.
Hence the pairs 
$(\F_{A_f}\otimes\K, \mathcal{D}_{A_f}\otimes\C)$
and
$(\F_{A,f}\otimes\K, \DA\otimes\C)$
are relative Morita equivalence in the sense of  \cite{MaTAMS2018},
so that   
there exists an isomorphism
\begin{equation*} 
\Phi_f: \F_{A_f}\otimes\K \longrightarrow \FAf\otimes\K
\end{equation*}
of $C^*$-algebras such that 
$\Phi_f(\mathcal{D}_{A_f}\otimes\C) = \mathcal{D}_A\otimes\C$  
by \cite{MaTAMS2018}.
\end{proof}
For a continuous function $f \in C(X_A,\N)$,
there exists $K \in \N$ such that 
$f = \sum_{\nu \in B_K(X_A)} f_\nu \chi_{U_\nu}$
for some $f_\nu \in \N$ for each $\nu \in B_K(X_A)$.
By taking the $K$-higher block matrix $A^{[K]}$ of $A$ as in \cite{LM},
the function $f$ is regarded as the one depending on only the first coordinate
of $X_{A^{[K]}}$.
We thus obtain the following theorem.
\begin{theorem}\label{thm:suspension}
Let $A$ be a primitive matrix with entries in $\{0,1\}$.
Let $f: X_A\longrightarrow \N$ be a continuous function on $X_A$.
Then the suspension algebra $\FAf$ is a unital simple AF-algebra
stably isomorphic to the AF-algebra defined by the suspended matrix $A^{[K]}_f$ 
of the $K$-higher block matrix of $A$ by $f$
for some $K\in \N$. 
\end{theorem}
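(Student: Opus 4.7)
The plan is to reduce the statement to the case already handled by Proposition \ref{prop:fullFA} by means of a higher block recoding, and then to read off both simplicity and the stable isomorphism from that proposition.

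First, since $f \in C(X_A, \N)$ is locally constant (as an integer-valued continuous function on the totally disconnected space $X_A$), there exists $K \in \N$ such that
\begin{equation*}
f = \sum_{\nu \in B_K(X_A)} f_\nu \chi_{U_\nu}
\end{equation*}
with $f_\nu \in \N$. Passing to the $K$-higher block presentation $A^{[K]}$ of $A$, the one-sided Markov shifts $(X_A, \sigma_A)$ and $(X_{A^{[K]}}, \sigma_{A^{[K]}})$ are topologically conjugate, and under this conjugacy $f$ becomes a function $\tilde f \in C(X_{A^{[K]}}, \N)$ that depends only on the first coordinate of $X_{A^{[K]}}$. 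By Theorem \ref{thm:1.2}, the topological conjugacy produces an isomorphism of $C^*$-algebras $\Phi: \OA \longrightarrow \mathcal{O}_{A^{[K]}}$ with $\Phi(\DA) = \mathcal{D}_{A^{[K]}}$ and $\Phi(\F_{A,f}) = \F_{A^{[K]}, \tilde f}$. Thus the suspension algebra $\FAf$ is canonically isomorphic to the suspension algebra $\F_{A^{[K]}, \tilde f}$ where the potential function now depends on only the first coordinate. Note also that primitivity of $A$ passes to primitivity of $A^{[K]}$.

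Next I would apply Proposition \ref{prop:fullFA} to the pair $(A^{[K]}, \tilde f)$. Construct the suspended matrix $(A^{[K]})_{\tilde f}$ (the suspended matrix $A^{[K]}_f$ in the notation of the theorem), let $P$ denote the analogue of the projection $P_A$ inside $\mathcal{O}_{(A^{[K]})_{\tilde f}}$, and use Lemma \ref{lem:fullOA} together with Proposition \ref{prop:fullFA}(i) to identify $P \F_{(A^{[K]})_{\tilde f}} P$ with $\F_{A^{[K]}, \tilde f}$. Since $A^{[K]}$ is primitive, the suspended matrix $(A^{[K]})_{\tilde f}$ is also primitive (every forced path from $j_0$ to $k_0$ through the suspension still realises all entries), so the standard AF-algebra $\F_{(A^{[K]})_{\tilde f}}$ is a unital simple AF-algebra. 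Hence $P$ is a nonzero, thus full, projection in $\F_{(A^{[K]})_{\tilde f}}$, and Proposition \ref{prop:fullFA}(ii) yields a stable isomorphism $\F_{(A^{[K]})_{\tilde f}} \otimes \K \cong \F_{A^{[K]}, \tilde f} \otimes \K$. Combining with the isomorphism $\F_{A,f} \cong \F_{A^{[K]}, \tilde f}$ from the first step, $\F_{A,f}$ is stably isomorphic to $\F_{(A^{[K]})_{\tilde f}}$, and, being a full corner of a simple AF-algebra, is itself a unital simple AF-algebra.

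The only step with genuine content is the first one, namely checking that the $K$-higher block recoding carries $\F_{A,f}$ onto $\F_{A^{[K]}, \tilde f}$; the rest is bookkeeping from the already-proved Proposition \ref{prop:fullFA}. The main subtlety I would be careful about is making sure that primitivity is preserved under the suspension construction applied to $A^{[K]}$, because that is what makes $P$ a full projection and thereby delivers the stable isomorphism; once this is in place, simplicity of $\F_{A,f}$ and the identification with the AF-algebra of $(A^{[K]})_{\tilde f}$ follow immediately.
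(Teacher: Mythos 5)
Your argument is, in substance, the paper's own: the paper establishes Theorem \ref{thm:suspension} by exactly the two steps you describe --- the paragraph preceding the theorem performs the $K$-higher-block reduction so that $f$ depends only on the first coordinate, and Proposition \ref{prop:fullFA} then supplies the corner identification $P_A\F_{A_f}P_A=\FAf$ and the stable isomorphism --- so there is no methodological divergence; you have merely made the recoding step explicit by invoking Theorem \ref{thm:1.2}.

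That said, the point you yourself isolate as the main subtlety, primitivity of the suspended matrix, is a genuine gap, and your parenthetical justification does not close it. Take $A=[2]$ the full $2$-shift (primitive) and $f\equiv 2$, which already depends only on the first coordinate, so no recoding is needed. Every edge of the suspended graph goes from some $j_0$ to some $j_1$ or from some $j_1$ to some $k_0$, so every cycle of $A_f$ has even length and $A_f$ is irreducible of period $2$, hence not primitive. Consequently $\F_{A_f}$ is not simple (the parity of the first coordinate is invariant under tail equivalence, so the AF-groupoid is not minimal), the projection $P_A$ is not full (its saturation misses the cylinder sets based at the vertices $j_1$), and indeed $\FAf=\F_A\cong M_{2^\infty}$ is not stably isomorphic to $\F_{A_f}$, which decomposes as a direct sum of two simple AF-algebras. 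So the assertion ``since $A^{[K]}$ is primitive, $(A^{[K]})_{\tilde f}$ is also primitive'' is false in general; one needs an additional hypothesis, for instance that the values $f^p(x)$ taken over periodic points of all periods $p$ have greatest common divisor $1$. To be fair, the paper makes the identical unproved claim in the proof of Proposition \ref{prop:fullFA}(ii), so you have inherited the gap rather than introduced it --- but it should be flagged in both places.
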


\medskip

{\it Acknowledgment:}
This work was supported by JSPS KAKENHI 
Grant Number 19K03537.

\end{document}